\newtheorem{theorem}{Theorem}
\newtheorem{corollary}[theorem]{Corollary}
\newtheorem{lemma}[theorem]{Lemma}
\newtheorem{proposition}[theorem]{Proposition}
\theoremstyle{definition}
\theoremstyle{remark}
\newtheorem{remark}[theorem]{Remark}
\newcommand{\id}{\mathrm{id}}
\newcommand{\Diff}{\mathrm{Diff}}
\begin{document}

\title{Solar models and McKean's breakdown theorem \\
for the $\mu$CH and $\mu$DP equations}

\author{Stephen C. Preston}

\date{\today}

\address{Department of Mathematics, Brooklyn College and the Graduate Center, City University of New York, NY 11106, USA}
\email{stephen.preston@brooklyn.cuny.edu}

\begin{abstract}
We study the breakdown for $\mu$CH and $\mu$DP equations on the circle, given by
$$m_t + u m_{\theta} + \lambda u_{\theta} m = 0,$$
for $m = \mu(u) - u_{\theta\theta}$, where $\mu$ is the mean and $\lambda=2$ or $\lambda=3$ respectively.
It is already known that if the initial momentum $m_0$ never changes sign, then smooth solutions exist globally.
We prove the converse: if the initial momentum changes sign, then $C^2$ solutions $u$ must break down in finite time.
The technique is similar to that of McKean, who proved the same for the Camassa-Holm equation, but we introduce a
new perspective involving a change of variables to treat the equation as a family of planar systems with central force
for which the conserved angular momentum is precisely the transported vorticity. We also demonstrate how this perspective
can apply to give some insights for other PDEs of continuum mechanics, such as the Okamoto-Sakajo-Wunsch equation
(and in particular the De Gregorio equation).
\end{abstract}

\maketitle

\tableofcontents

\section{Introduction}\label{sectionintro}

In this paper we study the $\mu$-$\lambda$ family of equations
\begin{align}
&m_t(t,\theta) + u(t,\theta) m_{\theta}(t,\theta) + \lambda u_{\theta}(t,\theta) m(t,\theta) = 0,\label{mubgeneral} \\
&m(t,\theta) = \sigma(t) - u_{\theta\theta}(t,\theta), \qquad \sigma(t) = \int_{S^1} u(t,\theta)\,d\theta \label{momentumdef} \\
&u(0,\theta) = u_0(\theta), \qquad t\ge 0, \;\theta\in S^1 = \mathbb{R}/\mathbb{Z}.\label{mubICs}
\end{align}
Here $u(t,\theta)$ is a velocity field on the circle, and $m(t,\theta)$ defined by \eqref{mubICs}
is called its \emph{momentum} or \emph{vorticity}. The two special cases we care about the most are:
\begin{itemize}
\item $\lambda=2$, the $\mu$-Camassa-Holm (or sometimes $\mu$-Hunter-Saxton) equation, and
\item $\lambda=3$, the $\mu$-Degasperis-Procesi equation.
\end{itemize}
Our interest is in whether solutions exist for all time $t\ge 0$, or if they break down at some $T>0$,
given an initial condition $u_0$. We will work with solutions $u(t,\cdot)\in C^2(S^1)$, assuming that
$u_0\in C^2$ and $m_0\in C^0$.

Integrating \eqref{mubgeneral} over $\theta\in S^1$ gives, after an integration by parts, the fact that $\sigma'(t)=0$,
so that for the remainder of the paper we will denote in \eqref{momentumdef}
\begin{equation}\label{sigmadef}
\sigma = \int_0^1 u_0(\theta)\,d\theta.
\end{equation}
If $u_0$ is such that $\sigma=0$ in equation \eqref{sigmadef}, then the breakdown picture is mostly
understood by work of Sarria-Saxton~\cite{SS1,SS2}, who showed that if $\lambda\in [-1,1]$ then all solutions of \eqref{mubgeneral}--\eqref{mubICs}
are global in time; if $1<\lambda\le 5$, then there exist $u_0$ such that solutions break down with $u_{\theta}(t,\theta_*)$ approaching negative
infinity for some $\theta_*\in S^1$; and for all other values of $\lambda$, there is an initial condition such that
breakdown happens everywhere. For $\lambda=2$ with $\sigma = 0$, the equation becomes the Hunter-Saxton equation~\cite{HunterSaxton}, and
its explicit solution together with the geometric interpretation in terms of spherical geodesics were given by Lenells~\cite{Lenells1}.
In particular all solutions break down in finite time with $u_{\theta}\to -\infty$
on a discrete set. If $\lambda=3$ with $\sigma = 0$, the equation \eqref{mubgeneral} is the
second derivative of the inviscid Burgers' equation $u_t + uu_{\theta}=0$, for which all solutions break down
in finite time as pointed out in Lenells-Misio{\l}ek-T\i{\u g}lay~\cite{LMT}. We will review these computations in Section \ref{sectionbackground}.

When $\sigma\ne 0$ the situation is more complicated: for some smooth $u_0$ the solution may break down, while for other smooth $u_0$ the solution
exists globally. Here we settle the question of precisely which initial conditions lead to breakdown for the two simplest and most important
special cases $\lambda=2$ and $\lambda=3$. This theorem is inspired by the result of McKean~\cite{mckeanbreakdown}, who proved the same for the Camassa-Holm
equation, which is \eqref{mubgeneral} but with \eqref{momentumdef} replaced by $m = u - u_{\theta\theta}$. Our proof is inspired by that one, and the
simplified version given in \cite{JNZ}. 

The main novelty of our approach is that we introduce a new central-force model which describes the equation more geometrically. We consider a family of particles in the plane depending on $\theta\in S^1$, such that $\eta_{\theta}(t,\theta)$ is zero if and only if the particle is at the origin. These particles in the plane are subject to a central force, and the conserved angular momentum is precisely the transported vorticity of the Euler-Arnold equation. Unless the central force is sufficiently large, particles with nonzero angular momentum will orbit, like planets in the solar system. However if the angular momentum vanishes, then it is possible (and relatively easy) for a particle to reach the origin in finite time. Thus if the angular momentum is always of the same sign, all particles orbit forever, while if it changes sign, then breakdown can occur. The details still depend on the particular equation, however. 

\begin{theorem}\label{mainthm}
Suppose the initial velocity $u_0$ is $C^2$ on $S^1$, and let $m_0(\theta) = \sigma - u_0''(\theta)$ be the initial momentum.
Assume that either $\lambda=2$ or $\lambda=3$. Then the solution $u$ of \eqref{mubgeneral}--\eqref{mubICs} exists and remains in $C^2$ for all time if and only if $m_0$ never changes sign on $S^1$. If $m_0$ does change sign, then $u_{\theta}(t,\theta_*)$ approaches negative infinity in finite time at a value $\theta_*\in S^1$ where $m_0$ changes from positive to negative.
\end{theorem}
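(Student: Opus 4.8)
The plan is to pass to Lagrangian coordinates and read off the key conservation law, then to treat the two directions separately. Let $\eta(t,\theta)$ be the flow of $u$, so that $\eta_t = u(t,\eta)$ and $\tfrac{d}{dt}\log\eta_\theta = u_\theta(t,\eta)$. Feeding the transport equation \eqref{mubgeneral} into this identity and integrating in $t$ gives the conservation law $m(t,\eta(t,\theta))\,\eta_\theta(t,\theta)^\lambda = m_0(\theta)$, so the sign of the momentum is frozen along each characteristic for as long as $\eta_\theta>0$. Since a $C^2$ solution persists exactly while $\eta_\theta$ stays positive, I would first record that $C^2$ breakdown is equivalent to $\eta_\theta(t,\theta_*)\to 0$, equivalently $u_\theta(t,\eta)\to-\infty$, at some label $\theta_*$. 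The central-force model enters here: differentiating $\eta_t=u(t,\eta)$ twice and using \eqref{mubgeneral}--\eqref{momentumdef} to eliminate the time derivatives, I would show that $p:=\eta_\theta$ (for $\lambda=3$) obeys a scalar Hill-type equation $\ddot p = g(t)\,p$ with $g$ a bounded, explicitly nonlocal forcing; embedding $p$ as the radius of a planar particle whose angle satisfies $\dot\psi = m_0/p^2$ makes the conserved angular momentum equal to the transported vorticity $m_0$, with the centrifugal term $m_0^2/p^3$ appearing in the radial equation.

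With this in place, the global-existence direction (already known, so I would only sketch it) becomes transparent: if $m_0$ never changes sign, then every particle carries angular momentum of one fixed sign, the centrifugal barrier $m_0^2/p^3$ forbids $p=\eta_\theta$ from reaching $0$, and the one-signed momentum lets me bound $u$ in $C^1$ through the periodic Green's function representation of $(\sigma-\partial_\theta^2)^{-1}$ (note that $\|m\|_{L^1}$ equals the fixed constant $\sigma$). A uniform bound on the forcing $g$ then keeps $\eta_\theta$ away from $0$ on any finite interval, giving global persistence.

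The substance is the converse. Suppose $m_0$ changes from positive to negative at $\theta_*$, so $m_0(\theta_*)=0$ and the corresponding particle has zero angular momentum. Writing $w(t)=u_\theta(t,\eta(t,\theta_*))$, differentiating $\eta_t=u(t,\eta)$ and using \eqref{mubgeneral} gives $\dot w = -w^2 + F_\theta(t,\eta)$, where differentiating \eqref{mubgeneral} twice and inverting $\partial_\theta^2$ yields $F_\theta = 3\sigma u + C(t)$ when $\lambda=3$ and $F_\theta = \tfrac12 u_\theta^2 + 2\sigma u + C(t)$ when $\lambda=2$, with $C(t)$ fixed by periodicity; substituting and using $u_\theta(t,\eta)=w$ produces the effective Riccati equation $\dot w = -\beta w^2 + (\text{nonlocal forcing})$, where $\beta=1$ for $\lambda=3$ and $\beta=\tfrac12$ for $\lambda=2$. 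The decisive step is to bound the forcing from above along this characteristic: using that the sign of $m$ is preserved ($m>0$ on the image of the left side and $m<0$ on the right) together with the explicit kernel of the velocity-to-slope map, I would show the nonlocal contribution to $u_\theta(t,\eta(t,\theta_*))$ is nonpositive, so the forcing is $\le B$ for a constant $B$ depending only on conserved quantities. Then $\dot w\le -\beta w^2 + B$, and the vanishing angular momentum, i.e. the absence of a centrifugal barrier, is exactly what lets $w$ drop below $-\sqrt{B/\beta}$ and run to $-\infty$ in finite time; in the planar picture this is the zero-angular-momentum particle falling radially into the origin, that is, $\eta_\theta\to 0$.

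I expect the main obstacle to be the uniform upper bound on the nonlocal forcing up to the (a priori unknown) breakdown time: controlling $u(t,\eta)$ and the seminorm hidden in $C(t)$ purely from the preserved sign of $m$ and the fixed value of $\sigma$, and making the sign analysis of the Green's kernel robust enough to pin the blow-up at a positive-to-negative crossing, and not at a mere zero of $m_0$ or a negative-to-positive crossing, where the centrifugal barrier survives or the effective radial force is repulsive. A secondary difficulty is the weaker damping coefficient $\beta=\tfrac12$ and the extra term $\tfrac12 u_\theta^2$ in the $\mu$CH case; I would absorb this term into the $-w^2$ term after re-expressing $u_\theta(t,\eta)=w$, so that the same Riccati comparison closes uniformly for both $\lambda=2$ and $\lambda=3$.
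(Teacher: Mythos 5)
Your setup (Lagrangian coordinates, the transport law $\eta_\theta^\lambda\, m(t,\eta)=m_0$, the central-force picture with angular momentum equal to the transported vorticity, and a Riccati comparison for $w=u_\theta(t,\eta)$) matches the paper's framework, but the converse direction has a genuine gap at its core. A differential inequality of the form $\dot w\le -\beta w^2+B$ produces finite-time blow-up only \emph{after} $w$ has dropped below the threshold $-\sqrt{B/\beta}$, and nothing in your argument forces this to happen: the absence of a centrifugal barrier at the zero of $m_0$ \emph{permits} the particle to fall into the origin but does not \emph{drive} it there (a particle with zero angular momentum and nonnegative radial velocity never collapses). The initial value $w(0)=u_0'(\theta_*)$ can be arbitrary, so the single-characteristic analysis at $\theta_*$ cannot close. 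The paper supplies exactly this missing mechanism with an interval argument on $(a,d)$ where $m_0<0$: (i) integrating the momentum identity $\partial_t(y/x)=m_0/x^2$ in time shows $x(t,\cdot)$ is \emph{increasing} on $[a,d]$ (this uses $\sigma>0$ after the reflection reduction, and is also what pins the blow-up at the positive-to-negative crossing $\theta=a$ rather than at the other endpoint), giving the time-uniform bound $x(t,c)\le (d-c)^{-1/\gamma}$; (ii) integrating the same identity in $\theta$ over $[b,c]$ and minimizing $\tfrac1\gamma\bigl(\sigma x^\gamma+|m_0|/x^2\bigr)$ gives exponential decay $x(t,b)\le x(t,c)e^{-Mt}$; (iii) the angular-momentum identity $x_t/x=y_t/y-m_0/(xy)$, integrated over $[a,b]$ with a Cauchy--Schwarz bound, yields $\int_a^b x_t/x\le \int_a^b y_t/y - N/x(t,b)^2$, and the last term grows like $e^{2Mt}$ while $y_t/y$ is only polynomially large. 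Only then does the Riccati/Sturm lemma apply at some $\theta\in[a,b]$. Without steps (i)--(iii) or a substitute, your proof does not conclude.

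Two secondary problems. First, your claimed constant bound $B$ on the nonlocal forcing "depending only on conserved quantities" is not available for $\lambda=3$: the only coercive conservation law there is the $L^2$ norm of $u$, which gives a bound on $\eta_{tt}$ and hence only a \emph{linearly growing} bound on $u(t,\eta)-\sigma$ (the paper's Lemma on force growth); the sign analysis of the Green's kernel of $(\mu-\partial_\theta^2)^{-1}$ that you invoke to make the forcing nonpositive is asserted but not carried out, and is not how the paper proceeds. (The paper's argument tolerates polynomially growing forcing because the exponential decay in (ii) dominates it.) Second, your identification of $p=\eta_\theta$ as the \emph{radius} of a planar particle with centrifugal term $m_0^2/p^3$ is not consistent with the equation $\ddot p=g(t)p$ you derive: a linear equation of that form makes $p$ a Cartesian coordinate of the central-force system, not its radius; the radius is $\sqrt{x^2+y^2}$ after the second coordinate $y=-\gamma x_\theta+\sigma x\int_0^t x^\gamma\,d\tau$ is constructed, and one must separately check (as the paper does) that $x\to0$ forces $y\to0$ so that the angular-momentum barrier on the radius actually protects $\eta_\theta$. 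Relatedly, you assume without proof that the solution persists in the transformed variables up to (and slightly past) the first zero of $\eta_\theta$, which for $\lambda=3$ requires the continuation argument of the paper's Section 5.
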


The fact that $m_0\ge 0$ or $m_0\le 0$ everywhere implies global existence is well-known: if $\lambda=2$ it was proven in the original paper of Khesin-Lenells-Misio{\l}ek~\cite{KLM} which introduced the $\mu$CH equation, and if $\lambda=3$ it was proven in the original paper of Lenells-Misio{\l}ek-T\i{\u g}lay~\cite{LMT} which introduced the $\mu$DP equation. We give a different proof which makes a bit more clear geometrically
why this works and generalizes to other equations of the form \eqref{mubgeneral}. On the other hand, while there are several results on sufficient conditions for breakdown of either the $\mu$CH or $\mu$DP equations (see e.g., \cite{FLQ} and \cite{GLZ}), they do not capture all cases. The similarity of Theorem \ref{mainthm} to the result of McKean suggests that a general principle applies: those equations which have the form \eqref{mubgeneral} for some function $m$, given as a pseudodifferential operator in terms of $u$, should have breakdown behavior which depends on the sign of the initial momentum $m_0$. It seems likely that with a bit more work, one can apply the technique here to similar families of PDEs to obtain the complete breakdown picture.

The special cases $\lambda=2$ and $\lambda=3$ in \eqref{mubgeneral}--\eqref{mubICs} are especially interesting because they are both completely integrable, with bihamiltonian structure generating infinitely many conservation laws: see \cite{KLM} and \cite{LMT} respectively. Aside from the conservation of average velocity \eqref{sigmadef}, which is true regardless of $\lambda$, we have for $\lambda=2$ that $\int_{S^1} u_{\theta}(t,\theta)^2 \, d\theta$ is constant, and for $\lambda=3$ that $\int_{S^1} u(t,\theta)^2\,d\theta$ is constant. We will not need any of the other conservation laws, which in general are not coercive. However one can use the complete integrability to obtain the global existence result, as shown in McKean~\cite{mckeanglobal} for the Camassa-Holm equation and sketched in T\i{\u g}lay~\cite{Tiglay} for the $\mu$-Camassa-Holm equation.

In Section \ref{sectionbackground}, we recall the vorticity conservation formula and derive some basic properties of the model \eqref{mubgeneral}--\eqref{mubICs}, including conservation laws. In Section \ref{sectionsolarmodel} we recall the solution formulas for the simplest case of mean-zero velocity fields (for the Hunter-Saxton and Degasperis-Procesi equation) and illustrate the solar model picture of breakdown. In 
Section \ref{sectiontransformation}, we present the general transformation for nonzero $\sigma$ and show that we obtain a central force system, 
where the conserved angular momentum is precisely the vorticity. In 
Section \ref{sectionlocal} we present the local existence theory, showing in particular when $\lambda=3$ that the solution exists in the transformed coordinates up to and slightly beyond the first time a particle reaches the origin; when $\lambda=2$ the solution exists for all time in the transformed coordinates. In Section \ref{sectionboundedforce} we prove that the central force is bounded polynomially in time, and we prove some general aspects of mechanics under central forces (not necessarily coming from a solar model of a PDE). These are used in Section \ref{sectionproofs} to prove Theorem \ref{mainthm}. Finally in Section \ref{sectionoutlook}, we discuss a different transformation of equation \eqref{mubgeneral} (where the momentum is given by $m=Hu_{\theta}$ instead of \eqref{momentumdef}) and illustrate how the solar picture here generates bounds for the solution; this is the Okamoto-Sakajo-Wunsch family of equations, a generalization of the De Gregorio equation which appears in a particularly simple way here. 

The author thanks Martin Bauer, Boris Khesin, Alice Le Brigant, Jae Min Lee, Stephen Marsland, Gerard Misio{\l}ek, Cristina Stoica, Vladimir {\u S}ver{\'a}k, Feride T\i{\u g}lay, and Pearce Washabaugh for very valuable discussions, as well as all the organizers and participants of the BIRS workshop 18w5151 and the Math in the Black Forest workshop for listening to early versions of this work. The work was done while the author was partially supported by Simons Foundation Collaboration Grant \#318969.

\section{Background}\label{sectionbackground}

Equation \eqref{mubgeneral}, for a general $m= L(u)$ defined by a pseudodifferential operator $L$ in terms of $u$, is a generalization of the Euler-Arnold equation. For $\lambda=2$ it is exactly the Euler-Arnold equation: it describes the evolution of geodesics under a right-invariant Riemannian metric on the diffeomorphism group $\Diff(S^1)$ of the circle, where the metric is given at the identity by
\begin{equation}\label{L2norm}
\langle u, u\rangle_{\id} = \int_{S^1} u L u \, d\theta.
\end{equation}
If $L$ is positive-definite, this defines a Riemannian metric, and the actual geodesic in the diffeomorphism group is found by solving the flow equation
\begin{equation}\label{flowequation}
\eta_t(t,\theta) = u\big(t,\eta(t,\theta)\big), \qquad \eta(0,\theta) = \theta.
\end{equation}
Paired with \eqref{mubgeneral}, this is a second-order differential equation for $\eta$; the decoupling is an expression of Noether's theorem due to the right-invariance.
The Camassa-Holm equation with $m=u-u_{\theta\theta}$ is the best-known example in one dimension; in higher dimensions one gets the Euler equations of ideal fluid mechanics and a variety of other equations of continuum mechanics. See surveys in \cite{AK1999}, \cite{KLMP2013}, \cite{KW} for other examples. When $\lambda=2$ and $L$ is nonnegative but not strictly positive, the equation may describe geodesics on quotient spaces of $\Diff(S^1)$, modulo
a quotient group generated by the kernel of $L$; see Khesin-Misio{\l}ek~\cite{KMhomogen} for the requirement. Examples include the Euler-Weil-Petersson equation~\cite{gaybalmazratiu} and the Hunter-Saxton equation.

For other values of $\lambda$, the quadratic form \eqref{L2norm} is not necessarily conserved, and if not then
the equation \eqref{mubgeneral} does not represent the equation for geodesics in a Riemannian metric. However
it can still be interpreted as a geodesic for a right-invariant but non-Riemannian connection; see \cite{KLM} and \cite{EschernonRiem}
for details on this construction in the present cases, and \cite{tiglayvizman} for the general situation.
A well-known example is the Okamoto-Sakajo-Wunsch equation~\cite{OSW}, where
$m = Hu_{\theta}$ in terms of the Hilbert transform $H$ (if $\lambda=-1$ it becomes the well-known De Gregorio
equation~\cite{degregorio}) which are considered the simplest one-dimensional models for vorticity growth in the 3D Euler equation.
We will return to this family at the end of the paper.
On the other hand if $m = -u_{\theta\theta}$ then \eqref{mubgeneral} is the generalized Proudman-Johnson equation,
studied in \cite{SS1,SS2}, which is related to self-similar infinite-energy solutions of the Euler equations of fluids.

What all these equations have in common is the conservation of vorticity property, which we describe as follows. 

\begin{proposition}\label{vorticitytransportprop}
For any equation of the form \eqref{mubgeneral}, regardless of how $m$ is related to $u$, we have the vorticity transport formula 
\begin{equation}\label{vorticitytransport}
\eta_{\theta}(t,\theta)^{\lambda} m\big(t,\eta(t,\theta)\big) = m_0(\theta).
\end{equation}.
\end{proposition}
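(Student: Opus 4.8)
The plan is to verify the transport formula directly by differentiating the left-hand side along the flow and showing the result vanishes, then invoking the initial condition. Write $F(t,\theta) = \eta_\theta(t,\theta)^\lambda\, m\big(t,\eta(t,\theta)\big)$. At $t=0$ we have $\eta(0,\theta)=\theta$ and $\eta_\theta(0,\theta)=1$, so $F(0,\theta) = m_0(\theta)$, which matches the claimed right-hand side. It therefore suffices to show $\partial_t F \equiv 0$, so that $F$ is constant in $t$ for each fixed $\theta$.

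The key computation involves two pieces. First I would differentiate the flow equation \eqref{flowequation} in $\theta$ to obtain the Jacobian evolution $\partial_t \eta_\theta(t,\theta) = u_\theta\big(t,\eta(t,\theta)\big)\,\eta_\theta(t,\theta)$, where the extra factor of $\eta_\theta$ comes from the chain rule applied to the $\theta$-dependence inside $u$. Second I would compute the material derivative of $m$ along the flow: by the chain rule,
\begin{equation*}
\frac{d}{dt}\, m\big(t,\eta(t,\theta)\big) = m_t\big(t,\eta(t,\theta)\big) + m_\theta\big(t,\eta(t,\theta)\big)\,\eta_t(t,\theta),
\end{equation*}
and substituting $\eta_t = u\circ\eta$ from \eqref{flowequation} together with the PDE \eqref{mubgeneral} evaluated at the point $\eta(t,\theta)$, the $m_t + u\,m_\theta$ terms combine to give exactly $-\lambda\, u_\theta\big(t,\eta(t,\theta)\big)\, m\big(t,\eta(t,\theta)\big)$.

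Now applying the product rule to $F = (\eta_\theta)^\lambda\,(m\circ\eta)$ and using the two identities above, the $\lambda(\eta_\theta)^{\lambda-1}\,\partial_t\eta_\theta\cdot(m\circ\eta)$ term contributes $+\lambda\,u_\theta\,(\eta_\theta)^\lambda\,(m\circ\eta)$, while the $(\eta_\theta)^\lambda\cdot \frac{d}{dt}(m\circ\eta)$ term contributes $-\lambda\,u_\theta\,(\eta_\theta)^\lambda\,(m\circ\eta)$, where every factor is evaluated along the flow. These cancel, so $\partial_t F = 0$ as required. This is essentially the one-dimensional analogue of the statement that the quantity $\eta_\theta^\lambda\,(m\circ\eta)$ is the conserved pullback of the momentum density under the flow, generalizing the transport of vorticity.

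The computation is entirely routine once the two intermediate identities are in hand, so I do not anticipate a genuine obstacle; the only point demanding care is the consistent evaluation at the moving point $\eta(t,\theta)$ versus the fixed point $\theta$, and correctly tracking the chain-rule factor of $\eta_\theta$ that appears when $\theta$-derivatives of $u$ are pulled back along $\eta$. One should also note the mild regularity requirement: the manipulation is valid classically for $C^2$ velocity fields with $m_0\in C^0$ (so that $F$ is at least $C^1$ in $t$), which is exactly the solution class fixed in the statement of the main theorem, so no additional hypotheses are needed.
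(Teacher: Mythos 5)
Your proposal is correct and is essentially the same argument as the paper's: both differentiate $\eta_\theta^\lambda\,(m\circ\eta)$ in time, use $\eta_{t\theta}=(u_\theta\circ\eta)\,\eta_\theta$ and the chain-rule identity $\tfrac{d}{dt}(m\circ\eta)=(m_t+u m_\theta)\circ\eta$ together with \eqref{mubgeneral} to see the two terms cancel, and then evaluate at $t=0$. The only difference is presentational: you make the initial-condition check and the regularity remark explicit, which the paper leaves implicit.
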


\begin{proof}
Observe that by the chain rule and the definition \eqref{flowequation} of $\eta$, we have
\begin{equation}\label{transportderivative}
\frac{\partial}{\partial t} m\big(t, \eta(t,\theta)\big) = m_t\big(t, \eta(t,\theta)\big) + u\big(t,\eta(t,\theta)\big) m_{\theta}\big(t,\eta(t,\theta)\big).
\end{equation}
Furthermore differentiating \eqref{flowequation} in $\theta$ yields
\begin{equation}\label{flowderiv}
\eta_{t\theta}(t,\theta) = u_{\theta}\big(t,\eta(t,\theta)\big) \, \eta_{\theta}(t,\theta).
\end{equation}
Using both in \eqref{mubgeneral} shows that
$$ \frac{\partial}{\partial t}\Big( \eta_{\theta}(t,\theta)^{\lambda} m\big(t,\eta(t,\theta)\big) \Big) = 0,$$
which shows that the vorticity $m$ is transported via \eqref{vorticitytransport}. 
This is a consequence only of \eqref{mubgeneral}, and is true regardless of whether $m$ is related to $u$ by \eqref{momentumdef} or not.
\end{proof}

As long as $\eta$ remains a diffeomorphism of the circle, we will have $\eta_{\theta}>0$, so that the sign of $m$ is preserved:
for each $\theta$, the transported vorticity $m\big(t,\eta(t,\theta)\big)$ along the Lagrangian path $\eta(t,\theta)$ is positive
if and only if the initial vorticity $m_0(\theta)$ is positive. Equation \eqref{vorticitytransport} can be inverted to solve for
$u\big(t,\eta(t,\theta)\big)$ in terms of $\eta_{\theta}$ and $m_0$, and from there we may obtain a first-order equation for $\eta$ using \eqref{flowequation}. We will not take this approach directly. Instead we study the second order system \eqref{mubgeneral}--\eqref{mubICs}, \eqref{flowequation} by an approximate linearization. That is, we differentiate \eqref{flowderiv} in time to get a second order equation for
$\eta_{\theta}$, then change variables to simplify it. We will elaborate on the differential geometric meaning of this at the end of the paper.

\begin{proposition}
Suppose $m = \sigma - u_{\theta\theta}$ with the definition \eqref{momentumdef}. Then $\sigma$ is constant, and 
equation \eqref{mubgeneral} can be written in the form
\begin{equation}\label{integrated}
u_{t\theta} + u u_{\theta\theta} + \frac{\lambda-1}{2} u_{\theta}^2 - \lambda \sigma u = I,
\end{equation}
for some function $I$ depending only on time. In addition, if $\lambda=2$ or $\lambda=3$, then $I(t)$ is constant in time. 
\end{proposition}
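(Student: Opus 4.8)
The plan is to obtain \eqref{integrated} by direct substitution and then pin down $I(t)$ by integrating over the circle. First I would recall that integrating \eqref{mubgeneral} over $S^1$ and integrating by parts already forces $\sigma'(t)=0$, so $\sigma$ is the constant in \eqref{sigmadef}. Substituting $m=\sigma-u_{\theta\theta}$ into \eqref{mubgeneral}, and using $\sigma_t=0$ and $\sigma_\theta=0$ so that $m_t=-u_{t\theta\theta}$ and $m_\theta=-u_{\theta\theta\theta}$, gives
\[
-u_{t\theta\theta} - u\,u_{\theta\theta\theta} + \lambda u_\theta(\sigma - u_{\theta\theta}) = 0.
\]
I would then observe that, up to an overall sign, the left-hand side is exactly $\partial_\theta$ of the quantity $Q := u_{t\theta} + u\,u_{\theta\theta} + \tfrac{\lambda-1}{2}u_\theta^2 - \lambda\sigma u$. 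Indeed, differentiating $Q$ term by term, the two pieces $u_\theta u_{\theta\theta}$ (from $u\,u_{\theta\theta}$) and $(\lambda-1)u_\theta u_{\theta\theta}$ (from $\tfrac{\lambda-1}{2}u_\theta^2$) combine to the single term $\lambda u_\theta u_{\theta\theta}$, matching the expression above. Hence $\partial_\theta Q = 0$, so $Q$ has no $\theta$-dependence and equals some function $I(t)$ of time alone, which is precisely \eqref{integrated}.

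Next, to evaluate $I(t)$ I would use that $I$ is independent of $\theta$ and integrate \eqref{integrated} over $S^1=\mathbb{R}/\mathbb{Z}$, which has unit length. The term $\int_0^1 u_{t\theta}\,d\theta$ vanishes by periodicity; integrating $\int_0^1 u\,u_{\theta\theta}\,d\theta$ by parts gives $-\int_0^1 u_\theta^2\,d\theta$ with no boundary contribution; and $\int_0^1 u\,d\theta = \sigma$. Collecting terms yields the clean formula
\[
I(t) = \frac{\lambda-3}{2}\int_{S^1} u_\theta(t,\theta)^2\,d\theta \;-\; \lambda\sigma^2 .
\]
For $\lambda=3$ the integral term drops out entirely and $I\equiv -3\sigma^2$ is manifestly constant in time. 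For $\lambda=2$, constancy of $I$ therefore reduces to constancy of $\int_{S^1} u_\theta^2\,d\theta$.

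The one step requiring care is this conservation law in the case $\lambda=2$. I would prove it directly from \eqref{integrated}: differentiating $\int_{S^1} u_\theta^2\,d\theta$ in time gives $2\int_{S^1} u_\theta u_{t\theta}\,d\theta$, into which I substitute $u_{t\theta}$ from \eqref{integrated} with $\lambda=2$. Each resulting term integrates to zero: the two cubic-in-$u$ contributions cancel after one integration by parts (using $\int u\,u_\theta u_{\theta\theta}\,d\theta = -\tfrac12\int u_\theta^3\,d\theta$), while the $\sigma u$ and the $I(t)$ terms integrate to $0$ by periodicity. Note there is no circularity, since the $I(t)$ term integrates away regardless of its value. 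This gives $\frac{d}{dt}\int_{S^1} u_\theta^2\,d\theta = 0$ and hence $I$ constant for $\lambda=2$; alternatively one may simply invoke the known conservation law $\int_{S^1}u_\theta^2\,d\theta = \text{const}$ for $\mu$CH cited in the introduction. The only (minor) obstacle is bookkeeping of the cubic terms $\int u_\theta^3\,d\theta$ so that they cancel cleanly; everything else is routine integration by parts exploiting periodicity on $S^1$.
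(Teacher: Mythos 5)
Your proposal is correct and follows essentially the same route as the paper: substitute $m=\sigma-u_{\theta\theta}$, integrate over $S^1$ to get $\sigma'=0$, antidifferentiate in $\theta$ to obtain \eqref{integrated}, integrate over the circle to identify $I(t)=\tfrac{\lambda-3}{2}\int_{S^1}u_\theta^2\,d\theta-\lambda\sigma^2$, and verify $\tfrac{d}{dt}\int_{S^1}u_\theta^2\,d\theta=0$ when $\lambda=2$ by the same integration by parts (the paper does this for general $\lambda$, obtaining $E'(t)=-(\lambda-2)\int_{S^1}u_\theta^3\,d\theta$, which specializes to your computation). No gaps.
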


\begin{proof}
Plugging the formula $m=\sigma-u_{\theta\theta}$ into \eqref{mubgeneral} gives
\begin{equation}\label{explicitmub}
\sigma'(t) - u_{t\theta\theta}(t,\theta) - u(t,\theta) u_{\theta\theta\theta}(t,\theta) + \lambda \sigma(t) u_{\theta}(t,\theta)
- \lambda u_{\theta}(t,\theta) u_{\theta\theta}(t,\theta) = 0.
\end{equation}
Integrate this over $\theta\in S^1$: all terms integrate to zero by periodicity, and we obtain $\sigma'(t)=0$, as mentioned in the Introduction.

Now find the antiderivative in $\theta$ of the remaining terms in \eqref{explicitmub}, and we obtain \eqref{integrated}
for some function $I(t)$. Integrating both sides over the entire circle shows that
\begin{equation}\label{Iformula}
I(t) = \frac{\lambda-3}{2} E(t) - \lambda \sigma^2, \qquad \text{where} \quad E(t) = \int_{S^1} u_{\theta}(t,\theta)^2 \, d\theta.
\end{equation}

Differentiation of \eqref{Iformula}, using \eqref{integrated}, gives
\begin{align*}
E'(t) &= 2 \int_{S^1} u_{\theta}u_{t\theta} \, d\theta \\
&= 2I(t) \int_{S^1} u_{\theta} \, d\theta + \lambda \sigma \int_{S^1} u u_{\theta} \, d\theta - 2\int_{S^1} u u_{\theta}u_{\theta\theta} \,d\theta
- (\lambda-1) \int_{S^1} u_{\theta}^3 \, d\theta \\
&= -(\lambda-2) \int_{S^1} u_{\theta}^3 \, d\theta
\end{align*}
after noticing the first two terms vanish and the third term can be integrated by parts to combine with the fourth term. 

In particular when $\lambda=2$ we have that $E(t)$ is constant, and thus so is $I(t)$. On the other hand, when $\lambda=3$, 
we get $I(t) = -3\sigma^2$, which is constant since $\sigma$ is.
\end{proof}

It is the form \eqref{integrated} of the equation, which makes sense for $u(t,\cdot) \in C^2(S^1)$, that we will view
as fundamental. We will see that the kinetic energy term $E(t)$ defined by \eqref{Iformula} controls the global behavior
of solutions. 
This is precisely the reason why our technique will work well in those two cases, and the lack of a bound on $E(t)$
is the reason we cannot yet prove Theorem \ref{mainthm} for other values of $\lambda$. (As will be clearer later, a polynomial
growth bound for $E(t)$ in $t$ would be sufficient to prove Theorem \ref{mainthm}, but the obvious successive-differentiation
manipulations seem to yield at best exponential growth.)

As is typical with equations of Euler-Arnold type (as first noticed by Ebin-Marsden~\cite{EM};
see also \cite{CKintertial} and \cite{MisClassical}), the equation is best-behaved in terms of
the flow $\eta$, i.e., using the Lagrangian description. To see this here, differentiate \eqref{flowderiv}
with respect to $t$ to get
$$ \eta_{tt\theta}(t,\theta) = \Big( u_{t\theta}\big(t,\eta(t,\theta)\big) + u_{\theta\theta}\big(t,\eta(t,\theta)\big) u\big(t,\eta(t,\theta)\big) +
u_{\theta}\big(t,\eta(t,\theta)\big)^2\Big) \eta_{\theta}(t,\theta).$$
Using this, equations \eqref{integrated}--\eqref{Iformula}, after composing with $\eta$ and using \eqref{flowequation} and \eqref{flowderiv}, become
\begin{equation}\label{densityeqn}
\eta_{tt\theta}  = - \frac{\lambda-3}{2} \, \frac{\eta_{t\theta}^2}{\eta_{\theta}} + \Big[ \lambda \sigma \big( \eta_t(t,\theta)-\sigma\big) + \frac{\lambda-3}{2} E(t)\Big] \eta_{\theta}(t,\theta).
\end{equation}
We are going to view this as an equation for $\eta_{\theta}$, in spite of the fact that $(\eta_t-\sigma)$ must be determined nonlocally by the
spatial integral of $\eta_{\theta}$; this is an unavoidable complication. Now the term in square brackets is relatively easy to control (at least if $\lambda=2$ or $\lambda=3$), while the first term on the right side of \eqref{densityeqn} is of higher order and more likely to become singular. The trick is thus to change variables to eliminate it, and end up with an equation that is nearly linear. We will first analyze this in the simplest case where 
$\sigma=0$ and $\lambda\in\{2,3\}$, and generalize from there.

\section{Solar models for H-S and D-P equations}\label{sectionsolarmodel}


Let us recall the analysis of the equations when $\sigma=0$ and $\lambda=2$ or $\lambda=3$, when everything can be done explicitly. The results here are well-known, but our perspective is new. 
The easiest case is $\lambda=3$ (solved in \cite{LMT}), where \eqref{densityeqn} becomes
$  \eta_{tt\theta}  = 0$. 
Define $x(t,\theta) = \eta_{\theta}(t,\theta)$ and $y(t,\theta) = -\eta_{t\theta}(t,\theta)$. Then we have
$$ x_{tt}(t,\theta) = y_{tt}(t,\theta) = 0,$$
which is a trivial central force system (with no force). Conservation of angular momentum of this system follows from
$$ \frac{\partial}{\partial t} ( x y_t - x_t y) = x y_{tt} - x_{tt} y = 0,$$
and the solutions are given by 
$x(t,\theta) = 1 + t u_0'(\theta)$ and $y(t,\theta) = -t u_0''(\theta)$. These obviously exist for all time, and $x$ remains positive for $t<T = \frac{1}{-\inf_{\theta\in S^1} u_0'(\theta)}$; hence also $\eta_{\theta}=x$ remains positive here. For larger $t$, the function $x(t,\theta)$ becomes negative, which means that $\eta(t,\theta)$ is not invertible as a function of $\theta$: it maps multiple values of $\theta$ to the same point. This leads to our inability to invert the formula $\eta_t(t,\theta) = u\big(t,\eta(t,\theta)\big)$ to find $u$, which is the shock phenomenon: the solution $u$ is not even continuous. Note however that $\eta(t,\theta) = \theta + t u_0(\theta)$ exists and remains as spatially smooth as $u_0$ for all time, another illustration of the fact that things are better in Lagrangian coordinates. 

The more interesting case is $\sigma=0$ and $\lambda=2$. Here equation \eqref{densityeqn} becomes 
\begin{equation}\label{huntersax}
\eta_{tt\theta}  = \frac{1}{2} \, \frac{\eta_{t\theta}^2}{\eta_{\theta}} - \frac{1}{2} E_0 \eta_{\theta}(t,\theta).
\end{equation}
Define $x(t,\theta) = \sqrt{\eta_{\theta}(t,\theta)}$; then equation \eqref{huntersax} becomes 
$$ x_{tt}(t,\theta) = -K^2 x(t,\theta), \qquad K^2 = \frac{E_0}{4}.$$
Here $K$ is constant in both space and time, and we have simple harmonic motion. 
Defining $y(t,\theta) = -2x_{\theta}(t,\theta)$, we clearly also have
$$ y_{tt}(t,\theta) = -K^2 y(t,\theta).$$
Since $x_t(t,\theta) = \tfrac{1}{2} \eta_{t\theta}(t,\theta) \eta_{\theta}(t,\theta)$ and $y_t(t,\theta) = -2x_{t\theta}(t,\theta)$, the fact 
that $\eta(0,\theta)=\theta$ and $\eta_t(0,\theta) = u_0(\theta)$ yields the initial conditions 
\begin{align*}
x(0,\theta) &= 1, &\qquad x_t(0,\theta) &= \tfrac{1}{2} u_0'(\theta) \\
y(0,\theta) &= 0, &\qquad y_t(0,\theta) &= -u_0''(\theta) = m_0(\theta)
\end{align*}
The solutions with these initial conditions are
$$ x(t,\theta) = \cos{Kt} + \tfrac{u_0'(\theta)}{2K} \sin{Kt}, \qquad y(t,\theta) = -\tfrac{u_0''(\theta)}{K} \sin{Kt}.$$
We can easily see that $x$ remains positive for
$$t<T = \frac{1}{K}\, \arctan{\left( \frac{2K}{\inf u_0'(\theta)}\right)}$$
and becomes negative beyond that. However since $\eta_{\theta}(t,\theta) = x(t,\theta)^2$ in this case, we will find for typical initial data
that $\eta_{\theta}(t,\theta)$ is positive for all $\theta$ except a discrete set of points (depending on $t$), which means $\eta$ will be a homeomorphism even if it not a diffeomorphism. This allows us to define $u$ as a continuous function, although its derivative $u_{\theta}$ will approach negative infinity wherever $x(t,\theta)=0$ by \eqref{flowderiv}. 
Note that again the central force system has conserved angular momentum, now given explicitly by 
$$ x(t,\theta) y_t(t,\theta) - y(t,\theta) x_t(t,\theta) = -u_0''(\theta) = m_0(\theta).$$
This  is the reason for the scaling on $y$. 
In Figure \ref{b2breakdownfig} we demonstrate what this looks like for a simple solution of the Hunter-Saxton equation.

\begin{figure}[!ht]
\begin{center}
\includegraphics[scale=0.35]{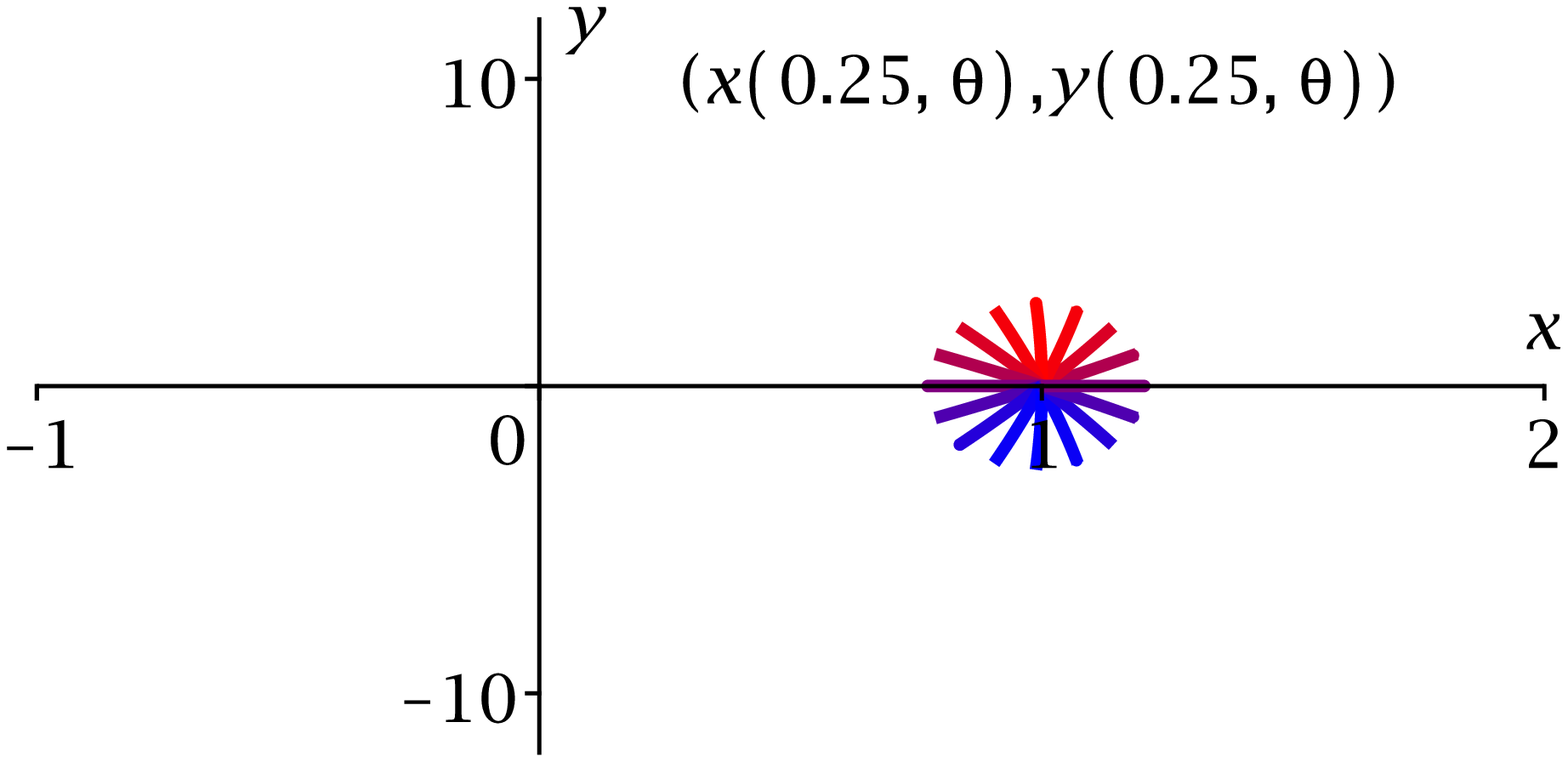} \qquad \includegraphics[scale=0.35]{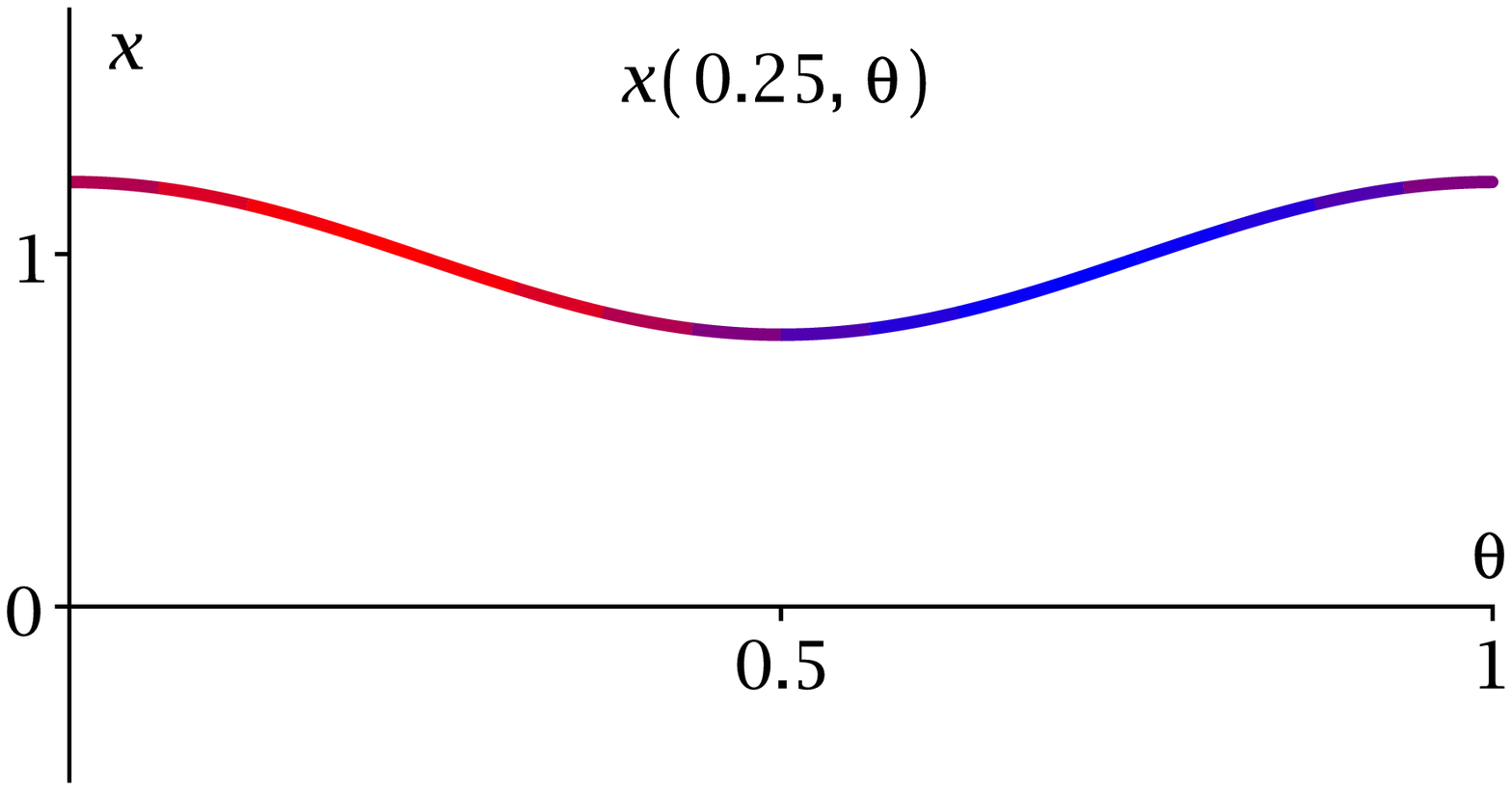} \\
\includegraphics[scale=0.35]{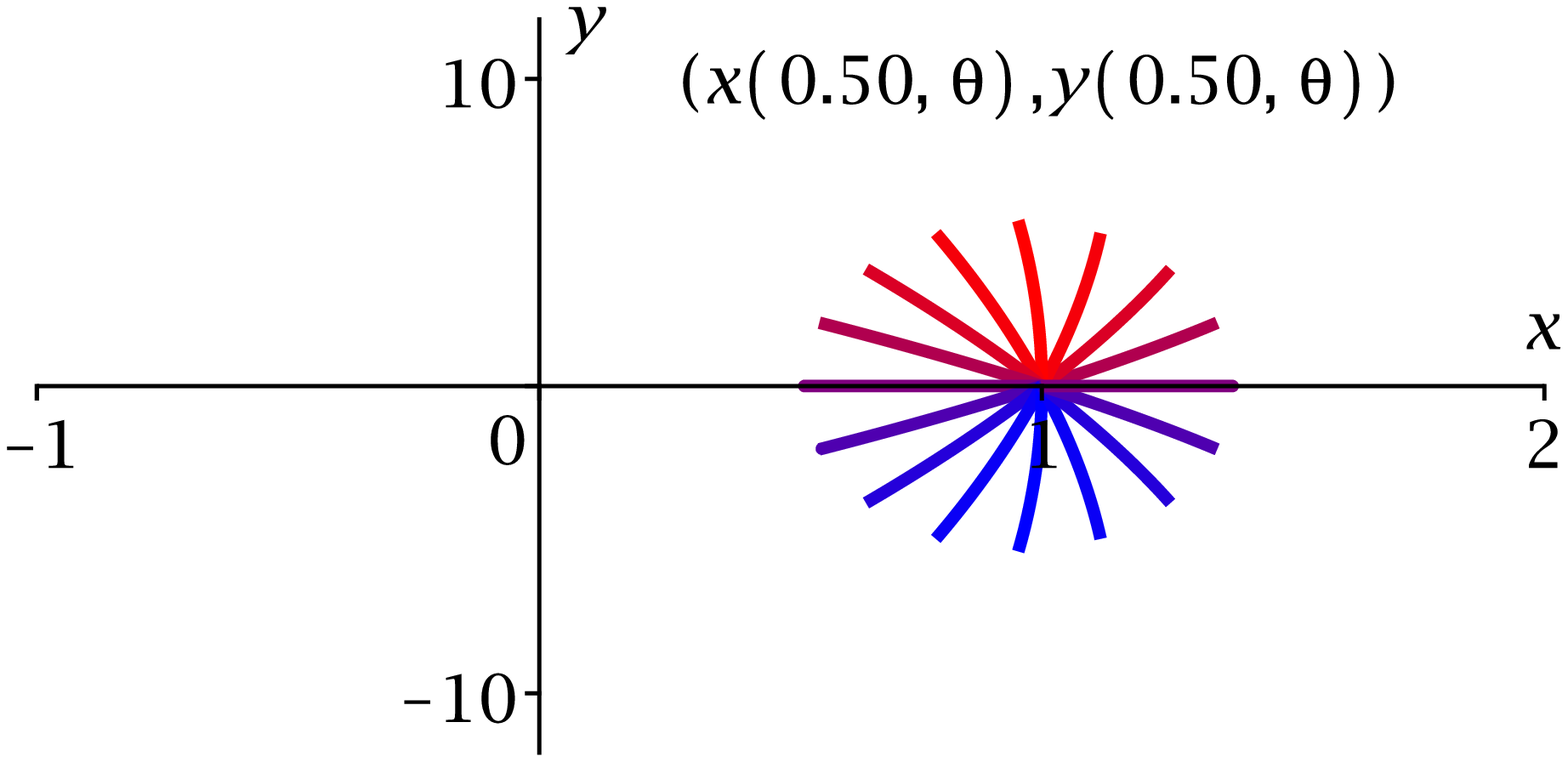} \qquad  \includegraphics[scale=0.35]{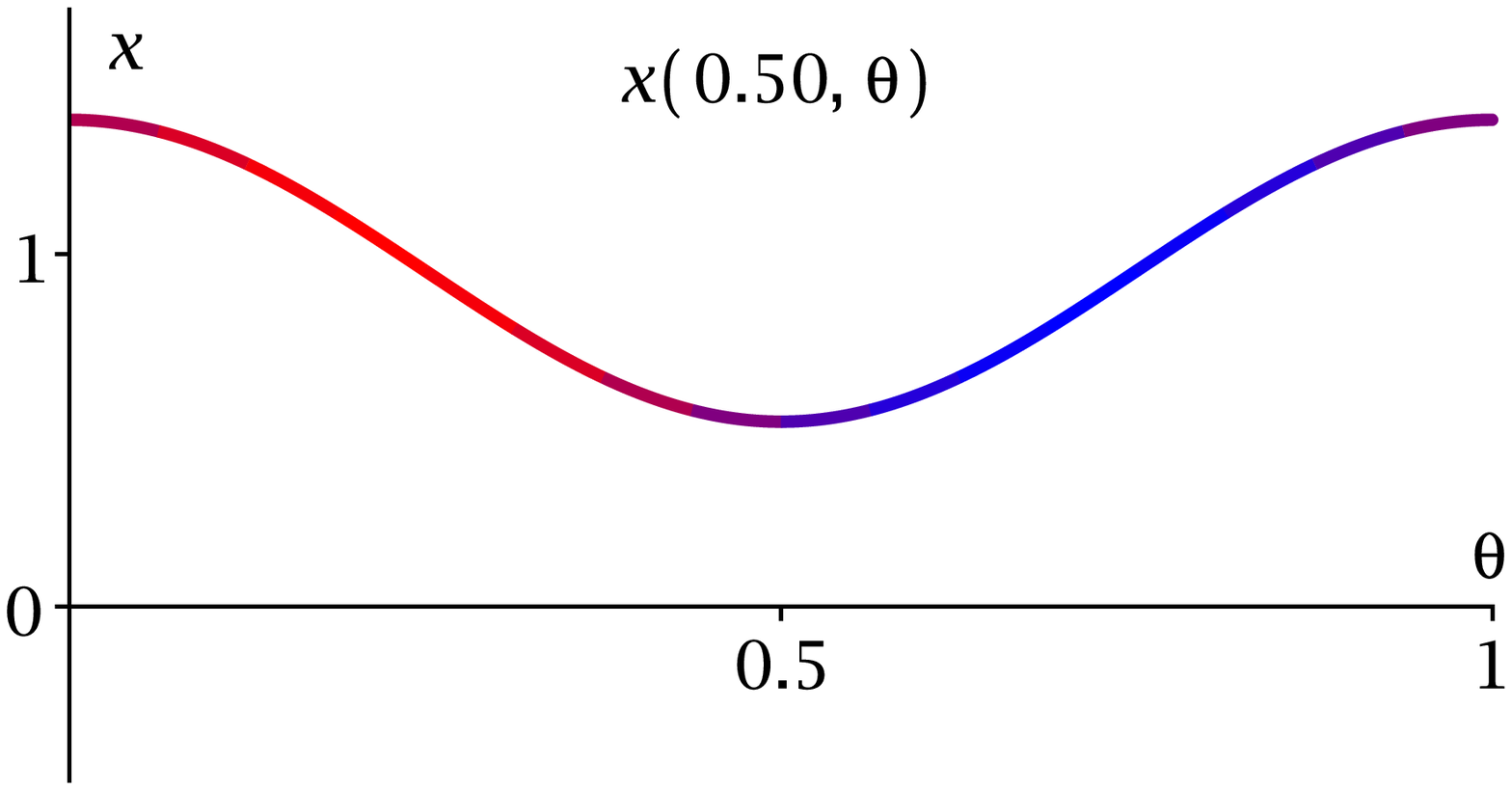} \\
\includegraphics[scale=0.35]{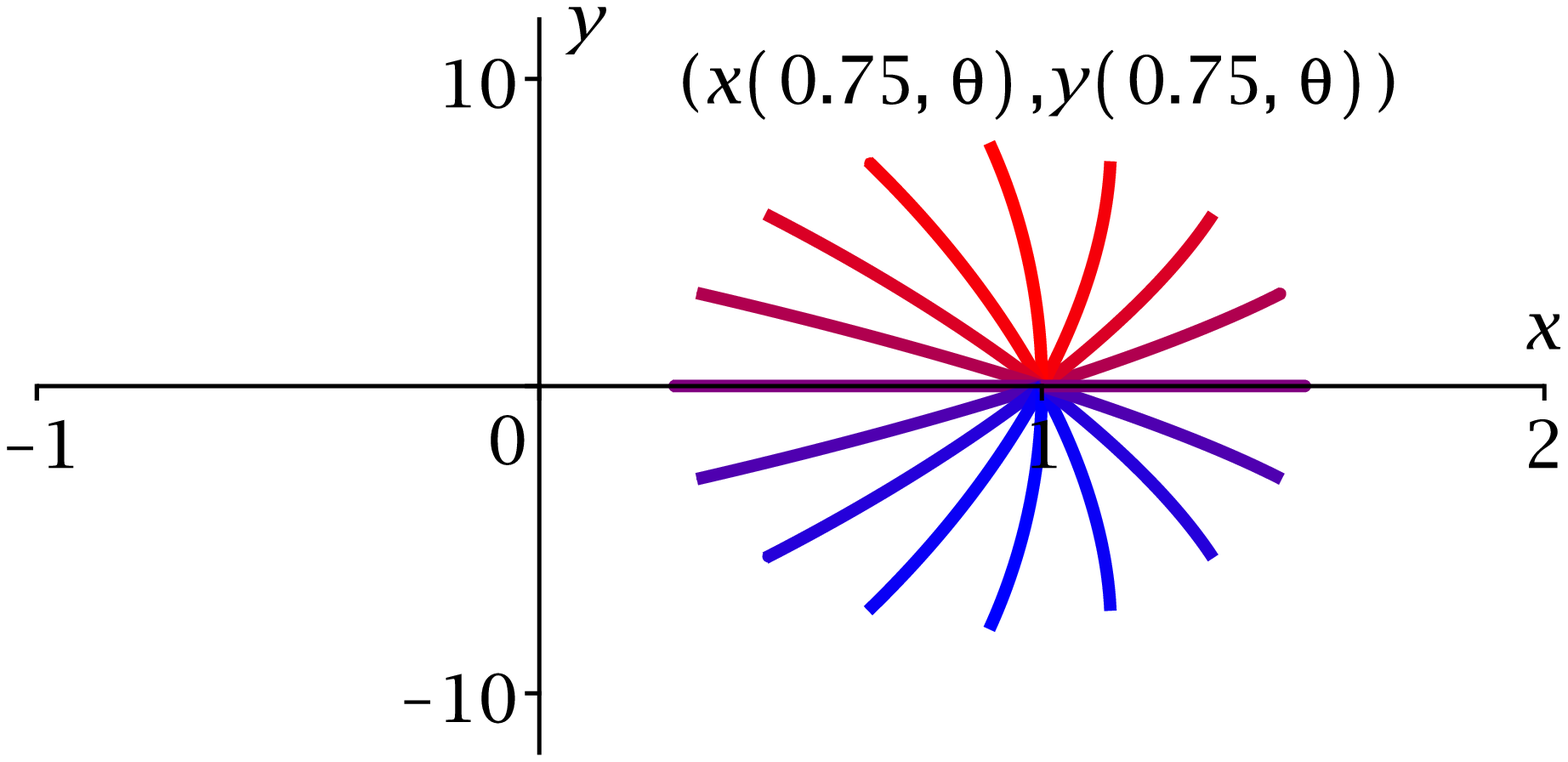} \qquad \includegraphics[scale=0.35]{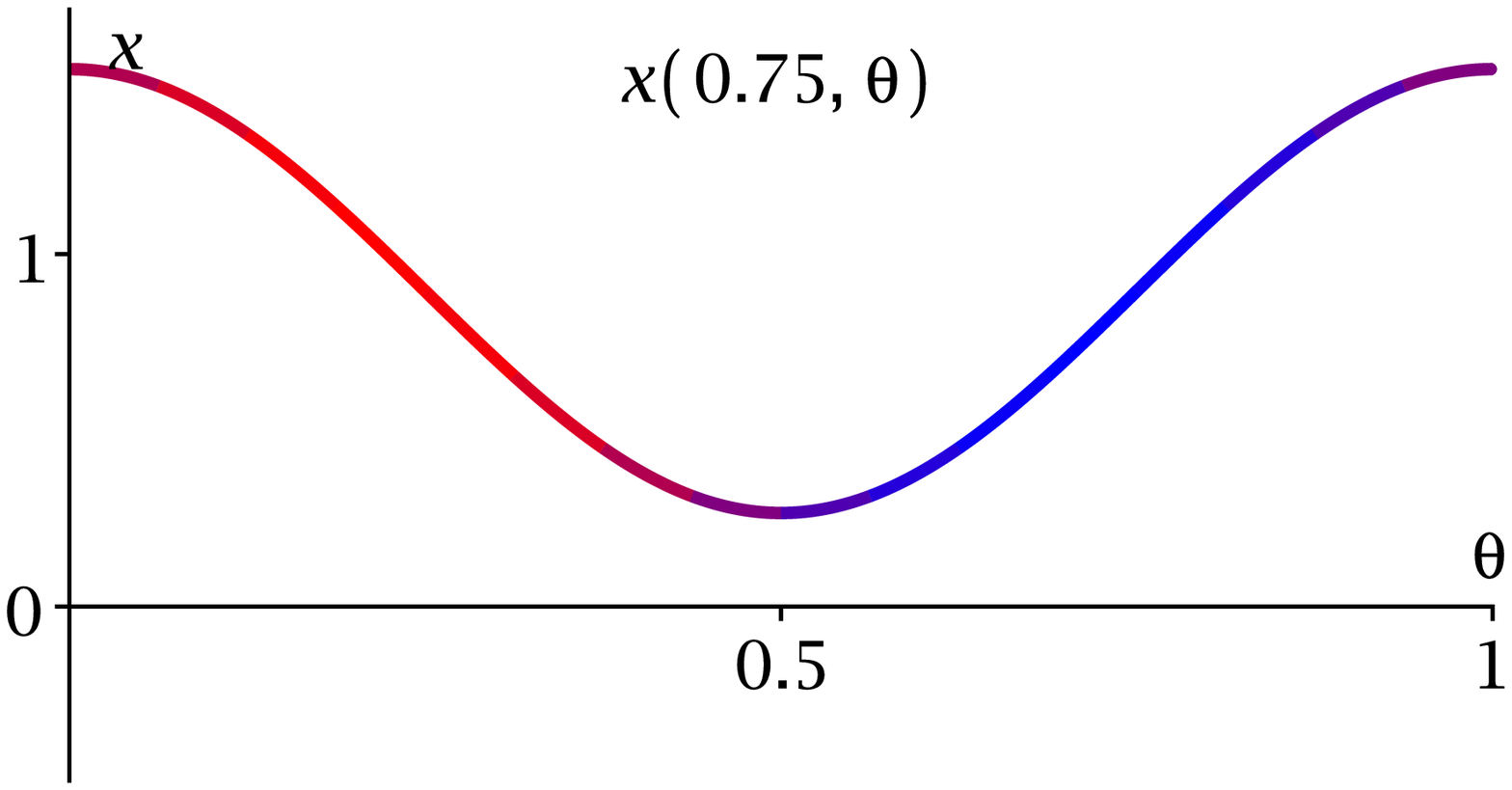} \\
\includegraphics[scale=0.35]{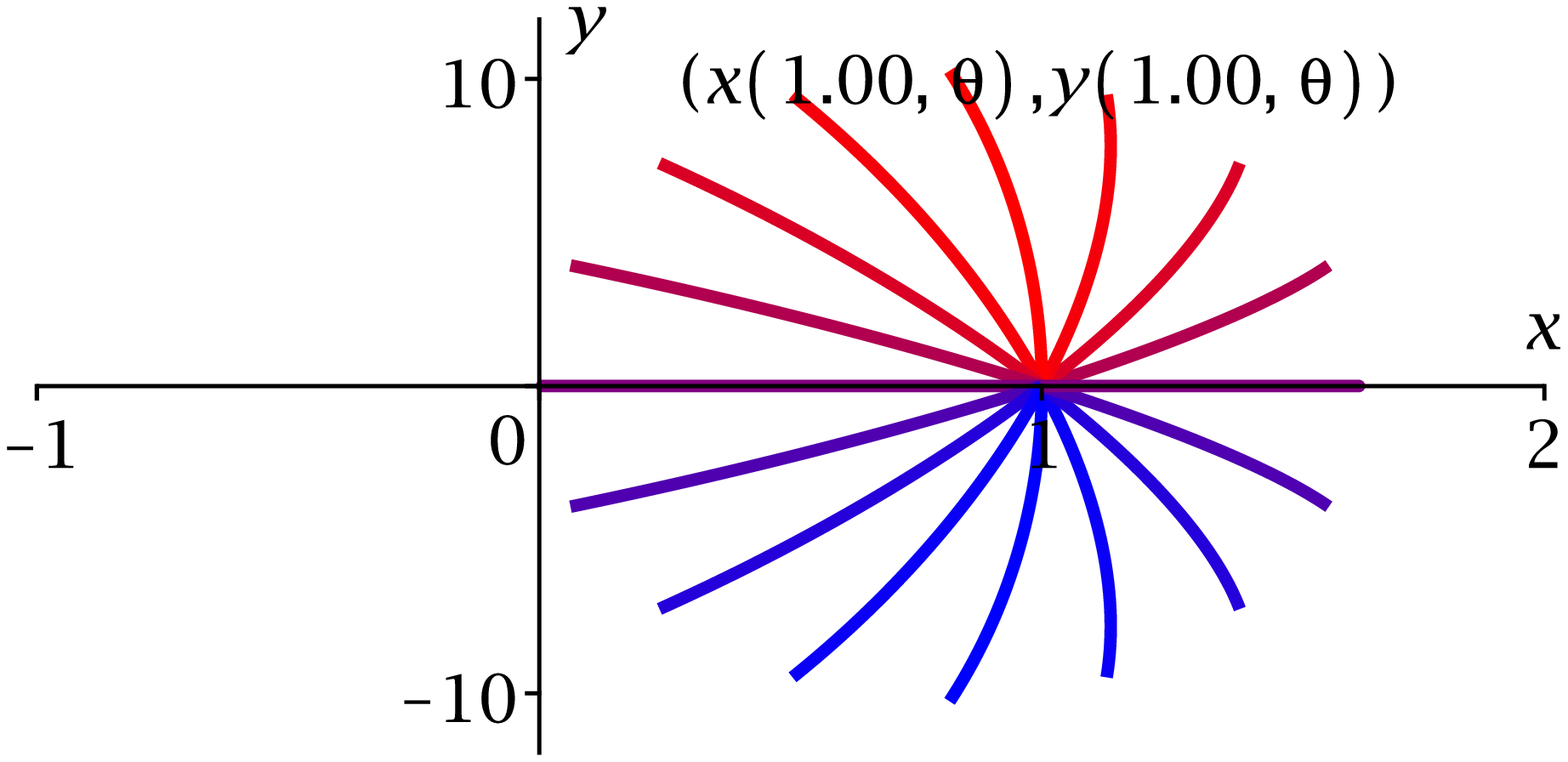} \qquad \includegraphics[scale=0.35]{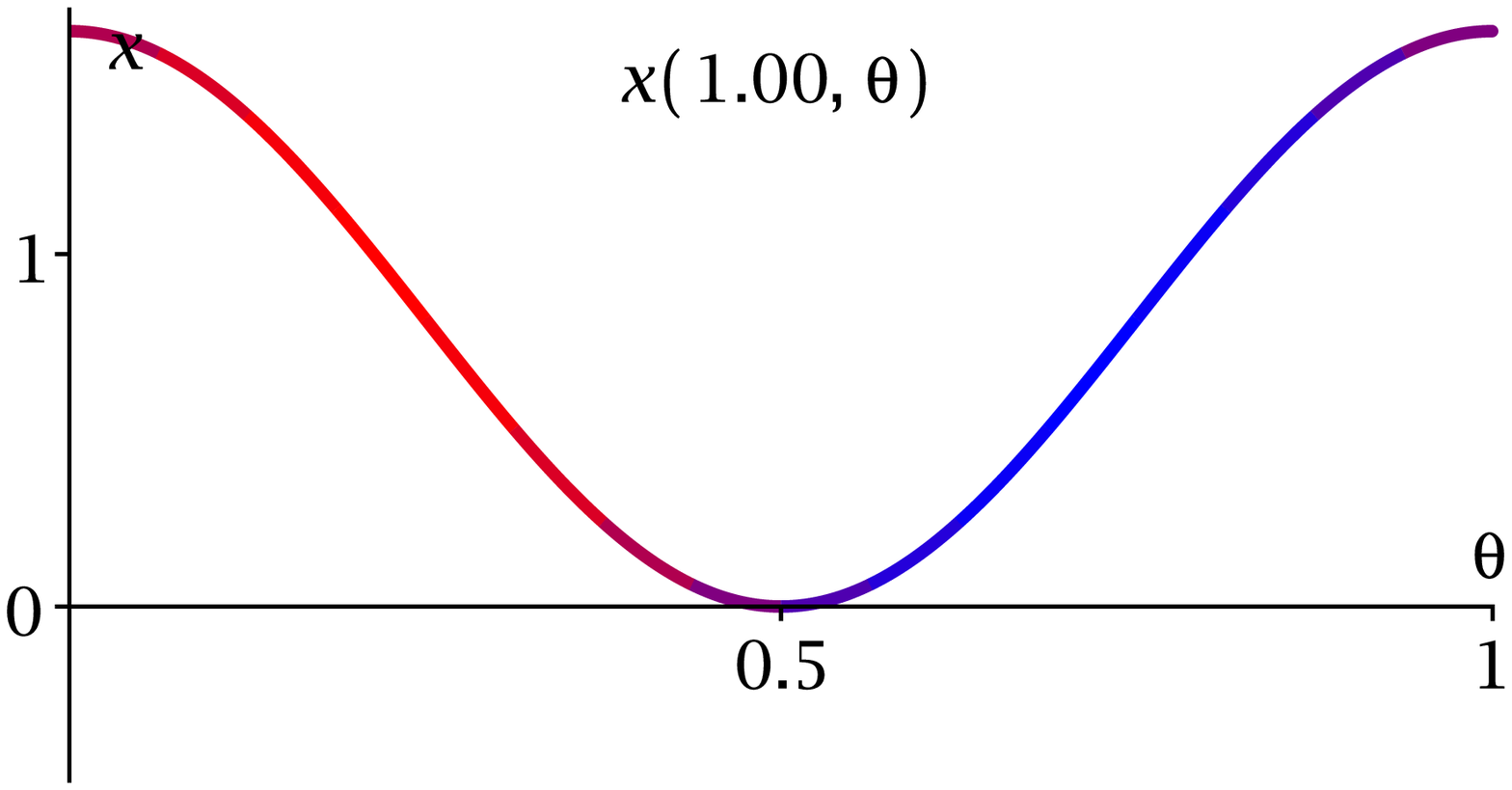} \\
\includegraphics[scale=0.35]{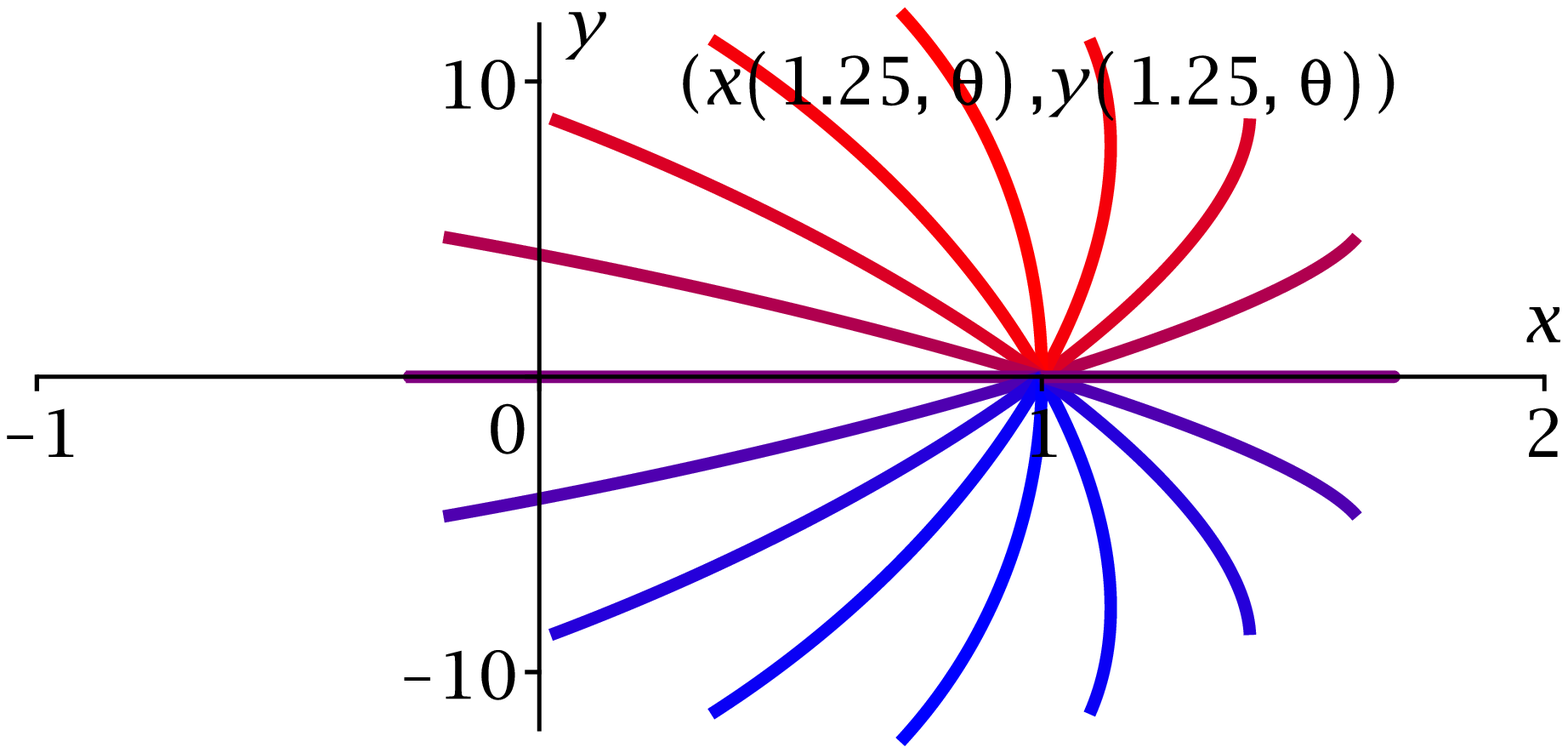} \qquad \includegraphics[scale=0.35]{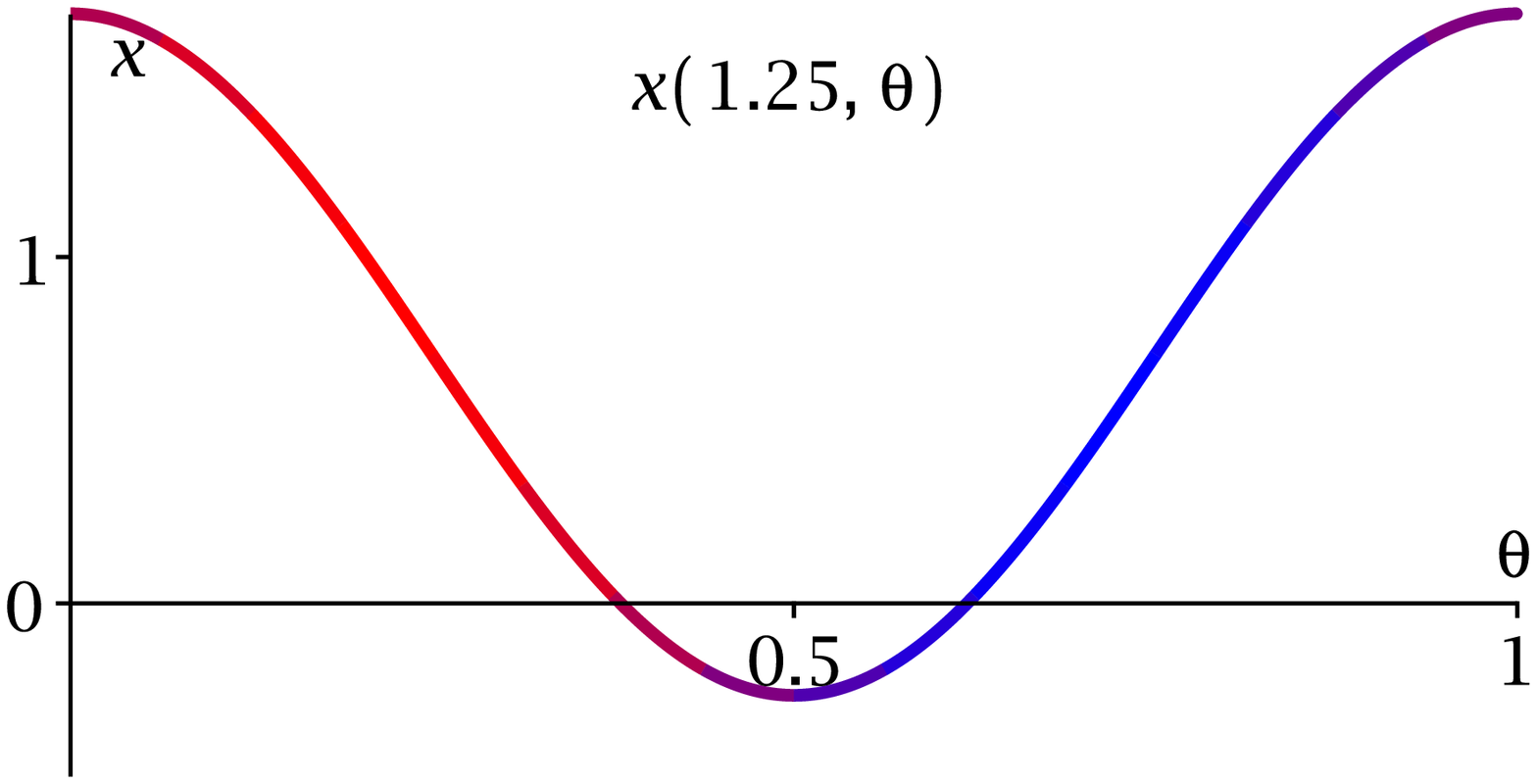} \\
\end{center}
\caption{Here we show both the solar model on the left and the solution $x(t,\theta)=\sqrt{\eta_{\theta}(t,\theta)}$ on the right for the Hunter-Saxton equation, with initial condition $u_0(\theta) = \alpha \sin{(2\pi \theta)}$ for $\alpha = \tfrac{2}{\pi} \arctan(\tfrac{1}{\sqrt{2}})$, with a breakdown time of $t=1$. In the solar model particles emerge from $(1,0)$ with velocity $\langle \tfrac{1}{2}u_0'(\theta),\omega_0(\theta)\rangle$ and approach the vertical wall $x=0$. On the right $x$ and $y=-2x_{\theta}$ have simultaneously reached zero, and the classical solution $u(t,\theta)$ breaks down. However the solution continues in the $(x,y)$ variables. Points colored red have positive angular momentum, while those in blue have negative angular momentum: the first breakdown occurs at the transition.}\label{b2breakdownfig}
\end{figure}

\begin{remark}\label{2notequal3remark}
We see that breakdown is very different already between $\lambda=2$ and $\lambda=3$. One might have expected that since $\lambda$ only appears as a coefficient of lower-order terms in the PDE \eqref{integrated}, it does not have a large role in the breakdown picture. However if $\lambda=2$ we have global weak solutions $u$ which remain continuous (and the corresponding $\eta$ typically remains a homeomorphism even if it is not a diffeomorphism). In 
fact if we consider all weak solutions that conserve energy, the family found here is unique~\cite{Tiglay2}. On the other hand if $\lambda=3$, the solution $u$ must become discontinuous, and as is well known the solution is no longer unique without an extra entropy condition. 
\end{remark}

\section{The general transformation}\label{sectiontransformation}

In the cases of the last section, we have seen that for each fixed $\theta$, the functions $x(t,\theta)$ and $y(t,\theta)$ form the components of a central-force system, which implies that the
angular momentum is always conserved. This conserved quantity is precisely the transported vorticity, so that the conservation law \eqref{vorticitytransport} is
encoded here automatically. This fact is what ensures that when the vorticity is always positive or always negative, classical solutions will be global; see
Theorem \ref{globalposmom}. The intuition is that the $(x,y)$ system is attracted or repulsed by a central force, analogously to the sun's gravity, and singularities correspond to the particle reaching the sun in finite time. As in our solar system, this can only happen if the particle dives directly into it, and any nonzero angular momentum prevents this. A very singular force may still lead to finite-time collapse, but in our situations the force is bounded on finite time intervals. We will now show how to obtain this picture in the general case when $\sigma\ne 0$ and $\lambda$ is any real number.

\begin{theorem}\label{transformthm}
For a parameter $\lambda\ne 1$, define $\gamma = \frac{2}{\lambda-1}$. Set
\begin{equation}\label{xydef}
x(t,\theta) = \eta_{\theta}(t,\theta)^{1/\gamma} \qquad\text{and}\qquad y(t,\theta) = -\gamma x_{\theta}(t,\theta) + \sigma x(t,\theta) \int_0^t x(\tau,\theta)^{\gamma}\,d\tau.
\end{equation}
Then the equation \eqref{densityeqn} is equivalent to the pair of equations
\begin{align}
\frac{\partial^2 x}{\partial t^2}(t,\theta) =
F(t,\theta) x(t,\theta)  \label{xeq}\\
\frac{\partial^2 y}{\partial t^2}(t,\theta) = F(t,\theta) y(t,\theta), \label{yeq}
\end{align}
with
\begin{equation}\label{Fdef}
F(t,\theta) = \frac{\lambda(\lambda-1)\sigma}{2}\, G(t,\theta) + \frac{(\lambda-1)(\lambda-3)}{4} E(t),
\end{equation}
where $E(t)$ defined by \eqref{Iformula} becomes
\begin{equation}\label{Edef}
E(t) = \gamma^2 \int_0^1 x(t,\phi)^{\gamma-2} x_t(t,\phi)^2 \, d\phi
\end{equation}
and $G(t,\theta):= \eta_t(t,\theta)-\sigma$ is given by
\begin{equation}\label{Gdef}
G(t,\theta) = \int_0^{\theta} x(t,\phi)^{\gamma-1} x_t(t,\phi)\,d\phi - \int_0^1 x(t,\phi)^{\gamma} \int_0^{\phi} x(t,\psi)^{\gamma-1} x_t(t,\psi)\,d\psi\,d\phi.
\end{equation}
The initial conditions for these equations are given by
\begin{alignat}{3}
x(0,\theta) &= 1, &\qquad x_t(0,\theta) &= \tfrac{1}{\gamma} u_0'(\theta) \label{xIC}\\
y(0,\theta) &= 0, &\qquad y_t(0,\theta) &= \sigma - u_0''(\theta) = m_0(\theta) \label{yIC}
\end{alignat}
\end{theorem}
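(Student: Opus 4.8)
The plan is to verify Theorem \ref{transformthm} by direct substitution: introduce the change of variables \eqref{xydef}, compute the relevant time derivatives, and check that \eqref{densityeqn} transforms exactly into \eqref{xeq}--\eqref{yeq} with the stated force \eqref{Fdef}. Since the $x$-equation and the $y$-equation are claimed to be \emph{identical} linear second-order ODEs (in $t$, for each fixed $\theta$) with the same coefficient $F(t,\theta)$, the natural strategy is to establish the $x$-equation first and then obtain the $y$-equation almost for free by differentiating in $\theta$ and using the structure of the force term.

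\emph{The $x$-equation.} First I would write $\eta_\theta = x^\gamma$, so that $\eta_{t\theta} = \gamma x^{\gamma-1} x_t$ and $\eta_{tt\theta} = \gamma(\gamma-1)x^{\gamma-2}x_t^2 + \gamma x^{\gamma-1}x_{tt}$. Substituting these into \eqref{densityeqn}, the higher-order term $-\tfrac{\lambda-3}{2}\,\eta_{t\theta}^2/\eta_\theta$ becomes a multiple of $x^{\gamma-2}x_t^2$, and the crucial point is that the choice $\gamma = \tfrac{2}{\lambda-1}$ is exactly what makes the $x^{\gamma-2}x_t^2$ terms cancel. After dividing through by $\gamma x^{\gamma-1}$, the surviving terms should collapse to $x_{tt} = F\,x$ with $F$ as in \eqref{Fdef}. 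The factor $\tfrac{\lambda-3}{2}E(t)$ in the bracket of \eqref{densityeqn} becomes $\tfrac{(\lambda-1)(\lambda-3)}{4}E(t)$ after the division (the extra $\tfrac{\lambda-1}{2}=\tfrac1\gamma$ coming from rewriting the coefficient), and the $\lambda\sigma(\eta_t-\sigma)$ term becomes $\tfrac{\lambda(\lambda-1)\sigma}{2}G$ since $G=\eta_t-\sigma$ and again a $\tfrac{\lambda-1}{2}$ appears; matching these numerical constants is the bookkeeping that makes \eqref{Fdef} come out right.

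\emph{The $y$-equation and the auxiliary formulas.} Here I would exploit that $F(t,\theta)$ enters \eqref{xeq} as a spatially-varying coefficient, but one which is built from $E(t)$ (independent of $\theta$) and $G(t,\theta)=\eta_t(t,\theta)-\sigma$. The definition of $y$ in \eqref{xydef} is engineered so that $y$ solves the same equation: I would check that $-\gamma x_\theta$ satisfies a perturbation of \eqref{xeq}, and that the correction term $\sigma x\int_0^t x^\gamma\,d\tau$ precisely absorbs the $\theta$-derivative of $F$ acting on $x$, using $\partial_\theta G = x^{\gamma-1}x_t$ (which follows by differentiating \eqref{Gdef}, the second integral in \eqref{Gdef} being the $\theta$-independent constant enforcing $\int_0^1 G\,d\theta = 0$ via periodicity of $\eta_t-\sigma$). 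I would also confirm \eqref{Edef} by rewriting $E(t)=\int u_\theta^2\,d\theta$ in Lagrangian variables through the change $d\theta = \eta_\theta\,d\phi = x^\gamma d\phi$ and $u_\theta\circ\eta = \eta_{t\theta}/\eta_\theta = \gamma x^{-1}x_t$, and \eqref{Gdef} by integrating $\partial_\theta G = x^{\gamma-1}x_t$ subject to the zero-mean normalization. The initial conditions \eqref{xIC}--\eqref{yIC} follow by evaluating at $t=0$, where $\eta_\theta=1$, $\eta_{t\theta}=u_0'$, and the angular-momentum reading $y_t(0,\theta)=m_0(\theta)$ reproduces the $\sigma=0$ computation of the previous section.

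\emph{The main obstacle} is the nonlocal and $\theta$-dependent nature of $F$: unlike the $\sigma=0$ case where the force was a spatial constant, here differentiating the $x$-equation in $\theta$ produces an extra term $F_\theta\, x$ that must be reconciled. The delicate step is verifying that the definition of $y$ — specifically the integral correction $\sigma x\int_0^t x^\gamma\,d\tau$ — is exactly the right antiderivative to kill this extra term and leave $y_{tt}=Fy$ intact, which amounts to checking the compatibility identity $\partial_\theta F = \tfrac{\lambda(\lambda-1)\sigma}{2}\,\partial_\theta G$ together with $\partial_t\!\big(\sigma x\int_0^t x^\gamma d\tau\big)$ tracking $\partial_\theta G$ correctly. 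I expect this single consistency computation to carry the whole theorem once the constants in $F$ are pinned down.
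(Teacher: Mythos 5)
Your overall route is the same as the paper's: substitute $\eta_\theta = x^\gamma$ into \eqref{densityeqn}, observe that $\gamma = \tfrac{2}{\lambda-1}$ is exactly the exponent that kills the $x^{\gamma-2}x_t^2$ term, divide by $\gamma x^{\gamma-1}$ to get $x_{tt}=Fx$, and then obtain $y_{tt}=Fy$ by differentiating the $x$-equation in $\theta$ and checking that the time-integral correction in the definition of $y$ cancels the resulting $F_\theta\,x$ term. The paper does precisely this: it computes $y_{tt} = Fy - \gamma F_\theta x + (\gamma+2)\sigma x^\gamma x_t$ and verifies the last two terms cancel. So the architecture is right.

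There are, however, two concrete slips in your handling of $G$. First, the normalization: you assert that the second integral in \eqref{Gdef} enforces $\int_0^1 G\,d\theta = 0$ ``via periodicity of $\eta_t-\sigma$.'' That is not the correct condition. What vanishes is $\int_{S^1}\bigl(u(t,\phi)-\sigma\bigr)\,d\phi$ in the \emph{Eulerian} variable; pulling back by $\phi=\eta(t,\theta)$ introduces the Jacobian and gives
\begin{equation*}
\int_{S^1}\bigl(\eta_t(t,\theta)-\sigma\bigr)\,\eta_\theta(t,\theta)\,d\theta=0,
\end{equation*}
i.e.\ the $x^\gamma$-weighted mean of $G$ is zero --- which is exactly why the outer integral in \eqref{Gdef} carries the weight $x(t,\phi)^\gamma$. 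If you integrate $G_\theta$ against the plain zero-mean normalization you get a different constant and your $G$ is no longer $\eta_t-\sigma$. Second, the derivative identity you propose to use, $\partial_\theta G = x^{\gamma-1}x_t$, is inconsistent with $G=\eta_t-\sigma$: since $\eta_\theta=x^\gamma$ one has $G_\theta=\eta_{t\theta}=\gamma x^{\gamma-1}x_t$ (the displayed \eqref{Gdef} is missing this factor of $\gamma$; the paper's proof uses the $\gamma$-scaled version). This factor is not cosmetic: the cancellation you identify as the ``single consistency computation'' requires $\gamma\cdot\tfrac{\lambda(\lambda-1)\sigma}{2}\cdot G_\theta\cdot x = (\gamma+2)\sigma x^\gamma x_t$, and since $\gamma^2\,\tfrac{\lambda(\lambda-1)}{2}=\lambda\gamma=\gamma+2$ this closes only with $G_\theta=\gamma x^{\gamma-1}x_t$; with your unscaled version it fails by a factor of $\gamma$ for every $\lambda\neq 3$. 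Both points are easily repaired, but as written the verification of the $y$-equation would not go through.
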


\begin{proof}
Since $\int_0^1 \eta_{\theta}(t,\theta)\,d\theta = 1$ for all $t$, note that we always have 
\begin{equation}\label{xpowerintegral}
\int_{S^1} x(t,\theta)^{\gamma}\,d\theta = 1.
\end{equation}

The formula \eqref{xeq} is a straightforward computation from \eqref{densityeqn}: the transformation $\eta_{\theta} = x^{\gamma}$ gives
$$ \eta_{tt\theta} + \frac{\lambda-3}{2} \,\frac{\eta_{t\theta}^2}{\eta_{\theta}} = \gamma x^{\gamma-1} x_{tt} + \gamma\left( \frac{\gamma(\lambda-1)}{2} - 1\right) x^{\gamma-2} x_t^2,$$
so that $\gamma = \frac{2}{\lambda-1}$ eliminates the quadratic term $x_t^2$ from the equation.
We then obtain
\begin{equation}\label{xequationstep}
x_{tt}(t,\theta) = \frac{\lambda-1}{2} \Big[ \lambda\sigma \big( \eta_t(t,\theta) - \sigma\big) + \frac{\lambda-3}{2} \, E(t)\Big] x(t,\theta).
\end{equation}
The formula for $G(t,\theta)$ is determined from the fact that we know
\begin{equation}\label{Gderivative}
G_{\theta}(t,\theta) = \eta_{t\theta}(t,\theta) = \gamma x(t,\theta)^{\gamma-1} x_t(t,\theta)
\end{equation}
as well as the fact that
\begin{equation}\label{Gintegral}
\int_{S^1} G(t,\theta) \eta_{\theta}(t,\theta)\,d\theta = 0,
\end{equation}
and these two conditions clearly uniquely determine $G$. The condition \eqref{Gintegral}
comes from the change of variables formula and \eqref{sigmadef}: we have
\begin{align*}
0 &= \int_{S^1} \big[ u(t,\phi)-\sigma\big] \, d\phi = \int_{S^1} \Big[ u\big(t,\eta(t,\theta)\big) - \sigma\Big] \eta_{\theta}(t,\theta) \, d\theta \\
&= \int_{S^1} \Big[ \eta_t(t,\theta) - \sigma\Big] \eta_{\theta}(t,\theta)\,d\theta.
\end{align*}
We can easily compute that $G$ defined by formula \eqref{Gdef} satisfies both requirements, using the formula \eqref{xpowerintegral}, and so \eqref{xequationstep} becomes \eqref{xeq}.

To prove \eqref{yeq}, we differentiate the formula \eqref{xydef} defining $y(t,\theta)$ twice with respect to time and obtain
$$ y_{tt}(t,\theta) = -\gamma x_{tt\theta}(t,\theta) + \sigma x_{tt}(t,\theta) \int_0^t x(\tau,\theta)^{\gamma}\, d\tau + (\gamma+2) \sigma x(t,\theta)^{\gamma} x_t(t,\theta).$$
Now insert the equation $x_{tt} = F x$, and its spatial derivative, to get
$$ y_{tt}(t,\theta) = F(t,\theta) y(t,\theta) -\gamma F_{\theta}(t,\theta) x(t,\theta) +  (\gamma+2) \sigma x(t,\theta)^{\gamma} x_t(t,\theta).$$
The last two terms in this equation cancel out using \eqref{Fdef} and \eqref{Gderivative}, which produces \eqref{yeq}.

The initial conditions come from the fact that $\eta(0,\theta) = \theta$ so that $\eta_{\theta}(0,\theta)\equiv 1$, which gives the conditions for $x(0,\theta)$ and $y(0,\theta)$. Differentiating the formula \eqref{xydef} with respect to $t$ and using \eqref{flowderiv} gives $\gamma x_t(0,\theta) = u_0'(\theta)$, along with $y_t(0,\theta) = -\gamma x_{t\theta}(0,\theta) + \sigma$, which is exactly the initial momentum $m_0(\theta) = \sigma - u_0''(\theta)$.
\end{proof}

The forcing term $F(t,\theta)$ defined by \eqref{Fdef} appearing in \eqref{xeq}--\eqref{yeq} depends on the solution $x$ and $x_t$ (or if we like on $y$ and $y_t$, since we can in principle reconstruct $x$ from $y$ if desired). As such we properly view \eqref{xeq} as an ODE on a Banach space. Fortunately the dependence of $F$ on $x$ and $x_t$ is relatively simple, and is well-behaved even if $x$ has only limited smoothness---for example if $x(t,\cdot)$ and $x_t(t,\cdot)$ are in $C^k(S^1)$ for some integer $k\ge 0$, then the function $F(t,\cdot)$ will be in $C^{k+1}(S^1)$. More importantly, the map $\Psi := (x,x_t)\mapsto F$ from $C^k\times C^k\to C^{k+1}$ is actually $C^{\infty}$ as a map of Banach spaces as long as $x$ remains positive (which is only needed for the power function to be smooth). Hence equation \eqref{xeq} describes a $C^{\infty}$ ODE on the space of functions $x$ satisfying
\begin{equation}\label{intxpower}
x\in C^k(S^1), \qquad \int_{S^1} x(\theta)^{\gamma} \, d\theta = 1, \qquad x(\theta)>0 \quad \forall \theta\in S^1,
\end{equation}
where the integral condition comes from \eqref{xpowerintegral}.
If $\gamma = \frac{2}{\lambda-1}$ happens to be an integer, as it does for $\lambda=2$ and $\lambda=3$, we get smoothness even for functions $x$ that may be zero or negative at some points, and this allows us to extend the ODE to the larger space
$$ x\in C^k(S^1), \qquad \int_{S^1} x(\theta)^{\gamma}\,d\theta = 1.$$
As we are interested in the breakdown of the equation when $\eta_{\theta}\to 0$, allowing $x$ to approach zero (and even continue to go negative) gives us global solutions in the new coordinate, which translate into weak solutions when we invert to get $\eta_{\theta}$, and from this $\eta$ and $u$.

\begin{corollary}\label{angularmomentumcor}
The angular momentum of the system \eqref{xeq}--\eqref{yeq} is conserved, and given by the formula
\begin{equation}\label{angularmomentum}
x(t,\theta)y_t(t,\theta) - y(t,\theta) x_t(t,\theta) = \eta_{\theta}(t,\theta)^{\lambda} m\big(t,\eta(t,\theta)\big) = m_0(\theta).
\end{equation}
\end{corollary}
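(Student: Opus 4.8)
The plan is to prove the two equalities in \eqref{angularmomentum} separately. First I would establish that the quantity $A(t,\theta) := x(t,\theta)y_t(t,\theta) - y(t,\theta)x_t(t,\theta)$ is conserved in time. This follows immediately from the fact that $x$ and $y$ satisfy the \emph{same} second-order linear ODE with the same time-dependent coefficient $F(t,\theta)$: differentiating gives
\begin{equation*}
\frac{\partial A}{\partial t} = x y_{tt} - y x_{tt} = x \cdot F y - y \cdot F x = 0,
\end{equation*}
using \eqref{xeq} and \eqref{yeq}. Since $F(t,\theta)$ is the \emph{same} scalar in both equations, the cross terms cancel regardless of how complicated $F$ is; this is exactly the Wronskian-type argument that makes the solar-model analogy precise, and it is the whole point of having arranged \eqref{yeq} to share the forcing of \eqref{xeq}.

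Because $A$ is constant in time, its value equals $A(0,\theta)$, which I evaluate from the initial conditions \eqref{xIC}--\eqref{yIC}: since $x(0,\theta)=1$, $y(0,\theta)=0$, and $y_t(0,\theta)=m_0(\theta)$, we get
\begin{equation*}
A(0,\theta) = 1\cdot m_0(\theta) - 0\cdot \tfrac{1}{\gamma}u_0'(\theta) = m_0(\theta).
\end{equation*}
This gives the rightmost equality $A(t,\theta) = m_0(\theta)$ for all $t$, and simultaneously establishes conservation.

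It remains to identify $A(t,\theta)$ with the transported vorticity $\eta_{\theta}^{\lambda}\,m(t,\eta)$. Here I would either invoke Proposition \ref{vorticitytransportprop} directly---which already gives $\eta_{\theta}(t,\theta)^{\lambda}m(t,\eta(t,\theta)) = m_0(\theta)$, so that the middle and right expressions in \eqref{angularmomentum} agree---or verify the middle equality from first principles. For the latter, I would express $A$ back in terms of $\eta$ using the definitions \eqref{xydef}. Writing $x = \eta_{\theta}^{1/\gamma}$ and $y = -\gamma x_{\theta} + \sigma x\int_0^t x^{\gamma}\,d\tau$, and noting $\gamma = \tfrac{2}{\lambda-1}$ so that $1/\gamma + \text{(other exponents)}$ combine to give the power $\lambda$, one computes $xy_t - yx_t$ and checks it reduces to $\eta_{\theta}^{\lambda}m(t,\eta)$ after substituting the relation $m(t,\eta) = \sigma - u_{\theta\theta}(t,\eta)$ and the flow identities \eqref{flowequation}, \eqref{flowderiv}. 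The cleaner route, however, is simply to cite Proposition \ref{vorticitytransportprop}, since both $A(t,\theta)$ and $\eta_{\theta}^{\lambda}m(t,\eta)$ have now been shown to equal $m_0(\theta)$.

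The main obstacle, such as it is, lies in the direct verification of the middle equality: the $y$ variable carries the extra nonlocal term $\sigma x\int_0^t x^{\gamma}\,d\tau$ coming from the $\sigma\ne 0$ correction in \eqref{xydef}, and one must confirm that its contribution to $xy_t - yx_t$ is consistent with the purely local transport formula \eqref{vorticitytransport}. Fortunately, because the conservation argument already pins $A(t,\theta)$ to its initial value $m_0(\theta)$ independently of this bookkeeping, the identification with the transported vorticity is forced without needing to disentangle the nonlocal term explicitly, so the only genuinely delicate point is the bookkeeping of the exponents to confirm the power $\lambda$ rather than some other power appears.
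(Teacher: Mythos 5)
Your proposal is correct. The conservation step (the Wronskian computation $\partial_t(xy_t-yx_t)=x\,Fy-y\,Fx=0$) is exactly the paper's argument. Where you diverge is in establishing the middle equality: the paper carries out the direct computation you only sketch, namely writing $xy_t-yx_t=x^2\,\partial_t(y/x)$, using \eqref{xydef} to get $\partial_t(y/x)=-\gamma\,\partial_t(x_\theta/x)+\sigma x^\gamma$, and then converting back through $\eta_\theta=x^\gamma$ and \eqref{flowderiv} to arrive at $\eta_\theta^{\lambda}\bigl(\sigma-u_{\theta\theta}(t,\eta)\bigr)$ at \emph{every} time $t$, after which evaluation at $t=0$ gives $m_0$. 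You instead pin the common value to $m_0(\theta)$ via conservation plus the initial conditions \eqref{xIC}--\eqref{yIC}, and then identify the middle expression by citing Proposition \ref{vorticitytransportprop}, which independently says $\eta_\theta^\lambda m(t,\eta)=m_0$. Both routes are valid. Yours is more economical and correctly observes that the nonlocal term in $y$ need not be disentangled; the paper's direct computation buys an explicit consistency check that the definition \eqref{xydef} of $y$ (in particular the $\sigma x\int_0^t x^\gamma\,d\tau$ correction) really does reproduce the transported vorticity pointwise in time, which is reassuring given that the $(x,y)$ system is meant to continue past the breakdown of the classical solution, where Proposition \ref{vorticitytransportprop} (which presupposes a classical solution $u$ and a diffeomorphic flow $\eta$) no longer applies directly. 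That caveat affects the interpretation of the middle expression in either proof, so it is not a gap in yours relative to the paper's.
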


\begin{proof}
The fact that angular momentum is conserved for central force systems is well-known: it follows from
$$ \frac{\partial}{\partial t} (xy_t - yx_t) = x y_{tt} - y x_{tt} = x(Fy) - y(Fx) = 0.$$
Equation \eqref{xydef} implies that
$$ \frac{\partial}{\partial t} \left( \frac{y(t,\theta)}{x(t,\theta)}\right) = -\gamma \, \frac{\partial}{\partial t} \left(\frac{x_{\theta}(t,\theta)}{x(t,\theta)}\right) + \sigma x(t,\theta)^{\gamma},$$
so that
\begin{align*}
x(t,\theta)y_t(t,\theta) - y(t,\theta)x_t(t,\theta) &=
-\gamma \, x(t,\theta)^2 \, \frac{\partial^2}{\partial t\partial \theta} \left(\ln{\big(x(t,\theta)\big)}\right) + \sigma x(t,\theta)^{\gamma+2}  \\
&= -\eta_{\theta}(t,\theta)^{\lambda-1} \, \frac{\partial^2}{\partial t\partial \theta} \left(\ln{\big(\eta_{\theta}(t,\theta)\big)}\right) +\sigma x(t,\theta)^{\lambda\gamma} \\
&= -\eta_{\theta}(t,\theta)^{\lambda-1} \, \frac{\partial}{\partial \theta} \left(u_{\theta}\big(t,\eta(t,\theta)\big)\right) +\sigma \eta_{\theta}^{\lambda}(t,\theta) \\
&= \eta_{\theta}(t,\theta)^{\lambda} \Big( \sigma - u_{\theta\theta}\big(t,\eta(t,\theta)\big)\Big).
\end{align*}
At time $t=0$, the right side is $m_0(\theta)$.
\end{proof}

\section{Local and global existence in the transformed variables}\label{sectionlocal}

Because the transformation to Lagrangian coordinates eliminates the loss of derivatives (essentially just being
able to combine terms like $m_t + u m_{\theta}$ into $\frac{\partial}{\partial t} m\circ\eta$ as in equation 
\eqref{transportderivative}), we get a smooth ODE
on the space of functions $(x,y)$. We want to work in the simplest space for which all the functions make sense, so we
will require that $u_0$ be $C^2$ in order to have the momentum be continuous. We then expect $u(t,\cdot)$ to be in $C^2$ for
short time, which by the flow equation \eqref{flowequation} should imply that $\eta$ is also spatially in $C^2$; hence
$x(t,\cdot)$ would be in $C^1$ and $y(t,\cdot)$ would be in $C^0$. Working in these spaces, we thus get existence of
solutions using Picard iteration. The following was proved for the case $\lambda=2$ by Deng-Chen~\cite{DC}, following the technique
of Lee~\cite{Lee} for the Camassa-Holm equation.
The proof for other values of $\lambda$ is similar, and just involves showing that
$F$ defined by \eqref{Fdef} is smooth as a function of $x$ and $x_t$.

\begin{theorem}\label{localexistence}
Consider the situation in Theorem \ref{transformthm}. The equation \eqref{xeq} is a second-order smooth ODE on the manifold
$$ \mathcal{S}^1_{\gamma} = \left\{ x\in C^1(S^1) \, \big\vert\, x(\theta)>0 \;\forall \, \theta\in S^1, \; \int_{S^1} x(\theta)^{\gamma} \, d\theta = 1 \right\}.$$
As such, for each initial condition $x(0)\equiv 1$ and $\frac{dx}{dt}(0) = \tfrac{1}{\gamma} u_0'(\theta)$ with $u_0\in C^2(S^1)$,
there is a $T>0$ and a solution $x\colon [0,T) \to C^1(S^1)$ of equation \eqref{xeq}.
\end{theorem}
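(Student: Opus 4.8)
The plan is to recast the second-order equation \eqref{xeq} as a first-order dynamical system on the tangent bundle of $\mathcal{S}^1_\gamma$ and invoke the Picard--Lindel\"of theorem for smooth vector fields on Banach manifolds, exactly as in the $\lambda=2$ argument of Deng--Chen~\cite{DC}. Writing $w=x_t$, the system reads $\dot x = w$ and $\dot w = F(x,w)\,x$, and the natural phase space is the tangent bundle $T\mathcal{S}^1_\gamma$, consisting of pairs $(x,w)\in C^1(S^1)\times C^1(S^1)$ with $x>0$, $\int_{S^1}x^{\gamma}\,d\theta=1$, and $\int_{S^1}x^{\gamma-1}w\,d\theta=0$. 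The prescribed data $(x,w)=(1,\tfrac{1}{\gamma}u_0')$ lies here, since $\int_{S^1}1\,d\theta=1$ and $\int_{S^1}u_0'\,d\theta=0$ by periodicity.

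First I would confirm that $\mathcal{S}^1_\gamma$ is a smooth Banach submanifold. The constraint functional $\Phi(x)=\int_{S^1}x^{\gamma}\,d\theta$ has derivative $D\Phi(x)[v]=\gamma\int_{S^1}x^{\gamma-1}v\,d\theta$, a nonzero (hence surjective) bounded linear functional whenever $x>0$; thus $\Phi$ is a submersion on the open cone $\{x\in C^1(S^1):x>0\}$, and $\mathcal{S}^1_\gamma=\Phi^{-1}(1)$ is a codimension-one $C^{\infty}$ submanifold with $T_x\mathcal{S}^1_\gamma=\{v: \int_{S^1}x^{\gamma-1}v\,d\theta=0\}$. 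This is where the positivity requirement in \eqref{intxpower} first enters.

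The crux---and, as the paper notes, the only point beyond the $\lambda=2$ case---is the smoothness of the vector field, which reduces to smoothness of $F$ in $(x,w)$. Here I would use that the Nemytskii (superposition) operator $x\mapsto x^{p}$ is $C^{\infty}$ from the open cone $\{x>0\}\subset C^1(S^1)$ into $C^1(S^1)$ for every real exponent $p$, precisely because $s\mapsto s^{p}$ is $C^{\infty}$ on $(0,\infty)$; positivity is essential for the non-integer exponents $\gamma-1$ and $\gamma-2$ appearing in \eqref{Edef}--\eqref{Gdef}. Since $E$ and $G$ are assembled from such powers together with the bounded linear operation of integration in $\theta$ and the smooth bilinear operation of multiplication on $C^1$, the composite $(x,w)\mapsto F$ is $C^{\infty}$ into $C^2(S^1)$---the regularity already asserted in the remark following Theorem \ref{transformthm}---and therefore $(x,w)\mapsto(w,Fx)$ is a smooth vector field.

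Finally I would verify that this field is tangent to $T\mathcal{S}^1_\gamma$, so the constraints persist and the local flow genuinely remains in the manifold. Differentiating the constraints along the flow, $\frac{d}{dt}\int_{S^1}x^{\gamma}\,d\theta=\gamma\int_{S^1}x^{\gamma-1}w\,d\theta$ vanishes on the constraint $\int_{S^1}x^{\gamma-1}w\,d\theta=0$, while $\frac{d}{dt}\int_{S^1}x^{\gamma-1}w\,d\theta=(\gamma-1)\int_{S^1}x^{\gamma-2}w^2\,d\theta+\int_{S^1}x^{\gamma}F\,d\theta$. Using $\int_{S^1}x^{\gamma}G\,d\theta=0$, which follows from \eqref{Gdef} and the normalization \eqref{xpowerintegral} (equivalently \eqref{Gintegral}), the $G$-contribution to $\int x^{\gamma}F$ drops out, and the two remaining terms cancel thanks to the choice $\gamma=\tfrac{2}{\lambda-1}$---the same choice that eliminated the quadratic term $x_t^2$ in the derivation of \eqref{xeq}. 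Hence both constraints are invariant, the restricted field is a smooth vector field on the Banach manifold $T\mathcal{S}^1_\gamma$, and the Cauchy--Lipschitz theorem yields a unique maximal solution $x\colon[0,T)\to\mathcal{S}^1_\gamma\subset C^1(S^1)$ through the given data. I expect the superposition-operator smoothness to be the main obstacle: it is what forces the positivity constraint defining $\mathcal{S}^1_\gamma$ and what must be checked afresh for each value of $\lambda$.
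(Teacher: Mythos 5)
Your proposal is correct and follows essentially the same route as the paper: rewrite \eqref{xeq} as a first-order system in $(x,v)$, establish smoothness of $E$, $G$, and hence $F$ on the positive cone in $C^1(S^1)$, exhibit the constraint set as a Banach submanifold via the implicit function theorem, check invariance of the constraints, and apply the Picard existence theorem for ODEs on Banach manifolds. Your explicit verification that $(\gamma-1)\int x^{\gamma-2}v^2\,d\theta$ cancels against $\tfrac{(\lambda-1)(\lambda-3)}{4}E$ under $\gamma=\tfrac{2}{\lambda-1}$ is a welcome elaboration of a step the paper dismisses as straightforward.
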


\begin{proof}
The main point is to write it as a first-order system with $v:=x_t$, viewing $E$, $F$, and $G$ as functions not of $(t,\theta)$ but of $(x,v)$.
That is, we write $F$ given by \eqref{Fdef} as
$$ F(x,v) = \frac{\lambda(\lambda-1) \sigma}{2}\,  G(x,v) + \frac{(\lambda-1)(\lambda-3)}{4} \, E(x,v),$$
where $G\colon C^1(S^1)\times C^1(S^1) \to C^1(S^1)$ from equation \eqref{Gdef} and $E\colon C^1(S^1)\times C^1(S^1)\to \mathbb{R}_+$ from \eqref{Edef} are given by
$$ G(x,v)(\theta) = \int_0^{\theta} x(\phi)^{\gamma-1} v(\phi) \, d\phi - \int_0^1 x(\phi)^{\gamma} \int_0^{\phi} x(\psi)^{\gamma-1} v(\psi) \, d\psi \, d\phi$$
and
$$ E(x,v) = \gamma^2 \int_0^1 x(\phi)^{\gamma-2} v(\phi)^2 \, d\phi.$$
As long as $x$ remains strictly positive, $E$ and $G$ are smooth functions of $(x,v)$. For example, the derivative of $E$ is
$$ DE_{(x,v)}(p,q) = \gamma^2(\gamma-1) \int_0^1 x(\phi)^{\gamma-3} p(\phi) v(\phi)^2 \, d\phi + 2 \gamma^2 \int_0^1 x(\phi)^{\gamma-2} v(\phi) q(\phi) \, d\phi,$$
which depends continuously on the $C^1$ functions $(x,v,p,q)$, and further derivatives can be computed the same way. Similarly the derivative of $G$ can be computed, and for any $C^1$ functions $(x,v,p,q)$, the derivative map $DG$ will also be a $C^1$ function (actually $C^2$ since $G$ is smoothing, but we don't need that).

The only thing that remains is to check that the integral constraint
$$\int_0^1 x(\theta)^{\gamma} \,d\theta=1, \qquad \int_0^1 x(\theta)^{\gamma-1} v(\theta)\,d\theta = 0$$
is a submanifold of $C^1_+(S^1)\times C^1(S^1)$, where $C^1_+(S^1)$ denotes the $C^1$ functions on $S^1$ with strictly positive image;
this is easy by the usual implicit function theorem for Banach spaces. Then we verify that the differential equation preserves these
constraints, which is straightforward, and shows that our smooth vector field actually descends to a vector field on the submanifold.
For details about the implicit function theorem and vector fields on Banach manifolds, see for example Lang~\cite{Lang} or Abraham-Marsden-Ratiu~\cite{AMR}.
\end{proof}

The local existence proof works for any value of $\lambda$, but for global existence we only have a proof in case $\lambda=2$, because that is the case where we know conservation laws to get global bounds on solutions. Even when $\lambda=3$ we cannot prove global existence since the conservation law only applies when $\eta$ is a diffeomorphism, and by Remark \ref{2notequal3remark} we cannot expect good ODE behavior in any coordinates: even when $\sigma=0$ the equation genuinely breaks down without a unique global weak solution, since $\eta_{\theta}=x$ must go negative. But this will demonstrate that for example $x$ and $y$ cannot approach infinity.
In case $\lambda=2$ proofs were given in Deng-Chen~\cite{DC} and in T\i{\u g}lay~\cite{Tiglay2}, so we will only treat the case $\lambda=3$. The essential thing here is the formula \eqref{integrated},
which for $\lambda=3$ becomes
\begin{equation}\label{b3eulerform}
u_t + u u_{\theta} = 3\sigma Q,\qquad\text{where} \quad Q = \partial_{\theta}^{-1}(u-\sigma),
\end{equation}
with the constant of integration in $Q$ chosen so that it has mean zero, since the left side must integrate to zero.
The conservation law
\begin{equation}\label{l2conserved}
\frac{d}{dt} \int_{S^1} u(t,\theta)^2 \, d\theta = 0
\end{equation}
proved in \cite{LMT} is one of the infinite family of conservation laws for $\lambda=3$,
and although it is not very strong, it is enough to get a bound on $Q$, which allows us to control the growth of $u$ pointwise,
at least as long as $\eta$ remains a diffeomorphism and for a (possibly small) time beyond.
This strategy comes from \cite{FLQ}.

\begin{theorem}\label{globalthm}
In case $\lambda=2$, the equation \eqref{xeq} has a solution $x\colon C^{\infty}\big([0,\infty), C^1(S^1)\big)$ for any $u_0\in C^2(S^1)$. 
In case $\lambda=3$, there is an $\varepsilon>0$ such that equation \eqref{xeq} has a solution $x\colon C^{\infty}\big([0,T+\varepsilon), C^1(S^1)\big)$
for any $u_0\in C^2(S^1)$, where $T$ is the first time such that $x(T,\theta)=0$ for some $\theta$. In either case 
equation \eqref{yeq} has a solution $y$ defined on the same time interval, $[0,\infty)$ or $[0,T+\varepsilon)$. 
\end{theorem}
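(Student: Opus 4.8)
The plan is to upgrade the local solution of Theorem~\ref{localexistence} to the stated time interval by the standard continuation principle for smooth ODEs on Banach manifolds: a maximal solution either exists for all time or its $C^1\times C^1$ norm blows up as $t$ approaches the maximal existence time $T_{\max}$. Thus it suffices to establish an a priori bound on $\|x(t,\cdot)\|_{C^1}$ and $\|x_t(t,\cdot)\|_{C^1}$ on the relevant interval. The single quantity to be controlled is the forcing $F$ from \eqref{Fdef}: once $|F(t,\theta)|\le M(t)$ with $M$ finite on finite intervals, the equations \eqref{xeq} and \eqref{yeq} are \emph{linear} with bounded coefficient, so a Gronwall estimate applied to $(x,x_t)$ and to $(y,y_t)$ (the latter starting from the bounded data $y(0)=0$, $y_t(0)=m_0\in C^0$) bounds $x$, $x_t$, $y$, $y_t$ pointwise, with at most exponential growth. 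Finally $x_\theta$ and $x_{t\theta}$ are recovered from $y$ and $y_t$ by solving \eqref{xydef} for $x_\theta$ and differentiating in $t$; since the time integral $\int_0^t x^\gamma\,d\tau$ is finite whenever $x$ is bounded pointwise, this yields the full $C^1\times C^1$ bound that excludes blowup.

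For $\lambda=2$ we have $\gamma=2$ and $F=\sigma G-\tfrac14 E$, and everything is uniform in time. The constraint \eqref{xpowerintegral} reads $\int_{S^1}x^2\,d\theta=1$, so $\|x(t,\cdot)\|_{L^2}=1$, while $E(t)=4\int_{S^1}x_t^2\,d\theta$ from \eqref{Edef} is conserved; one checks $E'(t)=0$ directly from \eqref{xeq}, using the constraint and the relation $G_\theta=2xx_t$, so that $\|x_t(t,\cdot)\|_{L^2}^2=E_0/4$ holds even after $x$ vanishes. By Cauchy--Schwarz the integrals defining $G$ in \eqref{Gdef} are then bounded by a constant depending only on $E_0$, whence $|F|\le M$ for a fixed $M$. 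The Gronwall estimates therefore grow at most like $e^{(1+M)t/2}$, finite on every finite interval, so $T_{\max}=\infty$ and the solution is global; the asserted $C^\infty$ dependence on $t$ is inherited from the smoothness of the ODE vector field of Theorem~\ref{localexistence}.

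For $\lambda=3$ we have $\gamma=1$, so the factor $(\lambda-3)=0$ kills the energy term and $F=3\sigma G$ with $G=u\circ\eta-\sigma$; moreover all powers of $x$ appearing in $F$ are polynomial, so the vector field of Theorem~\ref{localexistence} extends smoothly to the manifold $\{x\in C^1(S^1):\int_{S^1}x\,d\theta=1\}$ on which positivity is dropped, and it has a maximal solution on some $[0,T_{\max})$. To bound $G$ we use the form \eqref{b3eulerform} together with the conservation law \eqref{l2conserved}: since $Q=\partial_\theta^{-1}(u-\sigma)$ has mean zero, $\|Q(t,\cdot)\|_\infty$ is bounded by a fixed multiple of $\|u(t,\cdot)-\sigma\|_{L^2}$, hence uniformly in $t$ by conservation of $\|u(t,\cdot)\|_{L^2}=\|u_0\|_{L^2}$. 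Transporting along the flow, \eqref{b3eulerform} gives $\frac{d}{dt}\big(u(t,\eta(t,\theta))\big)=3\sigma\,Q(t,\eta(t,\theta))$, so $|u\circ\eta|$, and hence $|G|$, grows at most linearly in $t$. This argument is valid precisely while $\eta$ is a diffeomorphism, i.e. on $[0,\min(T,T_{\max}))$, and there it delivers, through the machinery of the first paragraph, a bound on $\|(x,x_t)(t)\|_{C^1\times C^1}$ finite up to time $T$.

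It remains to convert this into existence slightly past $T$. If $T_{\max}\le T$ were finite, then on all of $[0,T_{\max})$ we would have $x>0$ (as $T$ is the first zero of $x$), so the preceding a priori bound would keep $\|(x,x_t)\|_{C^1\times C^1}$ bounded as $t\to T_{\max}^-$, contradicting the blowup criterion; hence $T_{\max}>T$, and we set $\varepsilon:=T_{\max}-T>0$. In both cases the companion $y$ is automatically defined on the same interval, since \eqref{xydef} expresses $y$ through $x$, $x_\theta$ and $\int_0^t x^\gamma\,d\tau$, all of which exist wherever $x(t,\cdot)\in C^1$, and it satisfies \eqref{yeq} by Theorem~\ref{transformthm}. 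The main obstacle is the $\lambda=3$ bound on $G$: because the conservation law \eqref{l2conserved} and the transport identity only have meaning while $\eta$ is invertible, the delicate point is not the estimate itself but arguing that it forces the transformed solution to survive a definite time beyond the first collision $T$---consistent with Remark~\ref{2notequal3remark}, where $x=\eta_\theta^{1/\gamma}$ must genuinely cross zero and no global bound can hold.
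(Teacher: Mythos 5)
Your proposal is correct, and for $\lambda=3$ it follows the same strategy as the paper: use the $L^2$ conservation law to bound the forcing, observe that \eqref{xeq}--\eqref{yeq} are then linear with coefficients growing at most linearly in time, and push past the first zero of $x$ by local existence on the enlarged manifold where positivity of $x$ is dropped (legitimate because $\gamma$ is an integer, so the vector field of Theorem~\ref{localexistence} is polynomial in $x$). Three points of divergence are worth recording. First, the paper disposes of $\lambda=2$ by citation, whereas you give a self-contained argument; your direct verification that $E'(t)=0$ in the transformed variables, using $\int_{S^1}xx_t\,d\theta=0$ and $\int_{S^1}G\,G_\theta\,d\theta=0$, is exactly what is needed for the conservation law to survive after $x$ vanishes, and is a genuine addition. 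Second, for $\lambda=3$ you bound $G=u\circ\eta-\sigma$ by passing to Eulerian variables and the estimate $\|Q\|_\infty\le\|u-\sigma\|_{L^2}$; the paper instead writes the pressure as an explicit integral functional $P(\eta,V)$ of the Lagrangian data that never requires inverting $\eta$, together with the weighted conservation law $\int_{S^1}(V-\sigma)^2\eta_\theta\,d\theta=\text{const}$. The two are equivalent on $[0,T)$, which is all your continuation argument uses, but the paper's formulation is what allows the bound to be read off at $t=T$ itself, where $\eta_\theta\ge0$ but $\eta$ need no longer be a homeomorphism. Third, you close the $C^1$ estimate by solving \eqref{xydef} for $x_\theta$ after running Gronwall on $(y,y_t)$, while the paper differentiates \eqref{etaderivativeb3} in $\theta$ and runs Gronwall on $\eta_{\theta\theta}$; both routes work, and yours has the small advantage of not requiring an extra derivative of the equation.
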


\begin{proof}
In the case $\lambda=3$, the transformation \eqref{xydef} simplifies to just $x(t,\theta) = \eta_{\theta}(t,\theta)$.
The easiest way to proceed is to show that $\eta$ itself satisfies a differential equation for which the right side is bounded.
Equation \eqref{xequationstep} becomes
\begin{equation}\label{etaderivativeb3}
\eta_{tt\theta}(t,\theta) = 3\sigma \big(\eta_t(t,\theta) - \sigma\big) \eta_{\theta}(t,\theta),
\end{equation}
and integrating once more in space gives
\begin{equation}\label{etattpressure}
\eta_{tt}(t,\theta) = 3\sigma P(t,\theta),
\end{equation}
where $P$ is essentially a pressure function, related to $Q$ from \eqref{b3eulerform} by $P(t,\theta) = Q(t,\eta(t,\theta))$. 
$P$ is defined uniquely by the conditions
$$ P_{\theta}(t,\theta) = (\eta_t(t,\theta) - \sigma)\eta_{\theta}(t,\theta), \qquad \int_{S^1} P(t,\theta) \eta_{\theta}(t,\theta)\,d\theta = 0.$$
Suppressing time dependence, we can write $P$ explicitly in terms of $\eta$ and $V:=\eta_t$ by
$$ P(\eta, V)(\theta) = \int_0^{\theta} \big[V(\psi)-\sigma\big] [\eta(\psi)-\eta(0)] \eta'(\psi) \, d\psi
- \int_{\theta}^1 \big[ V(\psi)-\sigma\big] \big[ \eta(1)-\eta(\psi)] \eta'(\psi) \,d\psi.$$
For periodic $\eta\in C^2(S^1)$, this defines a periodic $C^2$ function $P$ which depends smoothly on $(\eta,V)$, since it involves only products and continuous integral operators. Furthermore because there is no composition with $\eta$, this still makes sense even if $\eta$ stops being a
homeomorphism.

The $L^2$ conservation law \eqref{l2conserved}, together with the conservation of the mean from \eqref{sigmadef}, implies that
$\int_{S^1} (u-\sigma)^2 \, d\theta$ is constant in time, and in Lagrangian form this becomes
\begin{equation}\label{L2conservegeneta}
\int_{S^1} \big[V(t,\theta)-\sigma\big]^2 \eta_{\theta}(t,\theta) \, d\theta = \int_{S^1} \big[u_0(\theta)-\sigma\big]^2 \, d\theta,
\end{equation}
which again makes sense even if $\eta_{\theta}$ is not positive. As long as $\eta_{\theta}$ remains nonnegative, we obtain from the mean-zero condition the bound
\begin{align*}
\sup_{\theta\in S^1} P(\eta,V)(t,\theta) &\le \int_{S^1} \lvert P_{\theta}(t,\theta)\rvert \, d\theta 
= \int_{S^1} \lvert V(t,\theta) - \sigma\rvert \eta_{\theta}(t,\theta) \, d\theta \\
&\le \sqrt{\int_{S^1} \lvert V(t,\theta)-\sigma\rvert^2 \eta_{\theta}(t,\theta) \, d\theta }  \, \sqrt{\int_{S^1} \eta_{\theta}(t,\theta) \, d\theta}
= \sqrt{\int_{S^1} \big[u_0(\theta)-\sigma\big]^2 \, d\theta},
\end{align*}
using \eqref{L2conservegeneta} and the fact that $\eta$ is periodic.

Hence as long as $\eta_{\theta}$ remains nonnegative, we have that $P(\eta,V)$ is bounded in the $C^0$ norm uniformly in time.
Equation \eqref{etattpressure} now implies that
$\eta_{tt}$ is uniformly bounded in time, and we conclude that $V=\eta_t$ grows at most linearly in time (again as long as $\eta_{\theta}$ remains nonnegative). Equation \eqref{etaderivativeb3} now implies that
$\eta_{\theta}$ satisfies an estimate of the form
$$ \lVert \eta_{tt\theta}\rVert_{C^0} \le \big(\lVert u_0\rVert_{C^0} + Kt\big) \lVert \eta_{\theta}\rVert_{C^0}.$$
In particular the right side of the differential equation is bounded on all finite time intervals in the space of $C^1$ diffeomorphisms
$\eta$. Thus by the usual theory of ODEs in Banach spaces, e.g., Proposition 4.1.22 in \cite{AMR}, the solution can be continued for $\eta\in C^1$
as long as $\eta_{\theta}$ remains nonnegative. In particular the local existence theorem gives some small $\varepsilon>0$ such that the solution can be continued on the interval $[0,T+\varepsilon)$, beyond the time $T$ where $\eta_{\theta}$ first reaches zero.

Differentiating equation \eqref{etaderivativeb3} in $\theta$ gives, by the same reasoning, an ordinary differential equation for $\eta_{\theta\theta}$ with uniform bounds in the supremum norm; hence a $C^2$ initial condition $u_0$ leads to a $C^2$ solution $\eta$, and thus a $C^1$ solution $x$. The fact that we also have a solution $y\in C^0$ is now straightforward, since $y$ satisfies the linear ODE \eqref{yeq} with known coefficients in terms of the function $x$.
\end{proof}

This theorem establishes that the only thing that can go wrong with the global solutions of equation \eqref{densityeqn} in the cases $\lambda=2$ and $\lambda=3$ is that $\eta_{\theta}$ approaches zero. Significantly, the equation for $\lambda=3$ in the form \eqref{etaderivativeb3} depends only on $\eta$ as a function on $S^1$ of some smoothness, but \emph{not} on the fact that $\eta$ is a diffeomorphism. Hence the local existence result for the ODE holds even when $\eta_{\theta}$ reaches zero, and we get existence for some (possibly small) time beyond that. The difficulty is that without a global bound on the $L^2$ energy, we cannot extend this for all time.

Again we note that in the case $\sigma=0$ the breakdown is completely understood: when $\lambda=3$, the function $\eta$ ceases even to be a homeomorphism as $\eta_{\theta}$ becomes negative, while if $\lambda=2$ the fact that $\eta_{\theta}=x^2$ means that $\eta_{\theta}\ge 0$ always, so that typically $\eta$ will remain a homeomorphism. Since $u = \eta_t \circ\eta^{-1}$, this is the difference between the solution $u$ having shocks where it must cease being continuous, as opposed to steepening where $u$ remains continuous but its slope may approach infinity due to equation \eqref{flowderiv}. For other values of $\lambda$ things may be much worse: Sarria and Saxton~\cite{SS1} showed that for $\lambda>5$ or $\lambda<-1$, there are solutions for which $\eta_{\theta}$ approaches either zero or infinity, everywhere at the breakdown time. The reason here is that for $\lambda=2$ or $\lambda=3$, the terms in the forcing function $F$ defined by \eqref{Fdef} are well-controlled in time, while in general there are no good estimates for the growth. In the next section we will see what consequences can be found if we can obtain a global bound on the central force.

\section{Properties of central force systems with bounded forcing terms}\label{sectionboundedforce}

Bounds for the central force (not necessarily uniform, but with controlled growth in time) are crucial for what comes next. We first record the bounds we can obtain in the cases $\lambda\in\{2,3\}$, then derive some consequences that apply to any central force system (not merely those arising from Euler-Arnold equations). 

\begin{lemma}\label{forcegrowth}
For $\lambda=2$ or $\lambda=3$, the forcing function $F$ given by \eqref{Fdef} satisfies a bound
$$ \sup_{\theta\in S^1} \lvert F(t,\theta)\rvert \le \begin{cases} K^2 & \lambda=2 \\
K^2 + C t & \lambda=3
\end{cases},
$$ for all time $t\in [0,T)$ as determined by Theorem \ref{globalthm}, for some constants $K$ and $C$ depending on the initial data $u_0$.
\end{lemma}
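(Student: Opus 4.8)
The plan is to treat $\lambda=2$ and $\lambda=3$ separately, since the structure of $F$ in \eqref{Fdef} simplifies in a genuinely different way for each, and these are exactly the two values for which $F$ admits a controlled growth bound. In both cases I read off the coefficients of $G$ and $E$ from \eqref{Fdef}. For $\lambda=2$ they are $\frac{\lambda(\lambda-1)\sigma}{2}=\sigma$ and $\frac{(\lambda-1)(\lambda-3)}{4}=-\tfrac14$, so $F=\sigma G-\tfrac14 E$; the key fact is that $E(t)\equiv E_0$ is \emph{constant} here (shown following \eqref{Iformula}), so everything reduces to a uniform bound on $G$. For $\lambda=3$ the energy coefficient $\frac{(\lambda-1)(\lambda-3)}{4}$ \emph{vanishes}, leaving $F=3\sigma G=3\sigma(\eta_t-\sigma)$; this is fortunate because for $\lambda=3$ one has $\gamma=1$ and $E$ involves $x^{-1}$, which is not controlled once $\eta_\theta\to 0$.

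For the case $\lambda=2$ (so $\gamma=2$) I would bound $G$ as follows. The constraint \eqref{Gintegral} says $G$ has zero mean against the probability measure $x^{\gamma}\,d\theta$, so $G$ either vanishes identically or changes sign; in either case it has a zero $\theta_0$, and since $G$ is periodic a path argument gives $\sup_\theta\abs{G}\le\tfrac12\int_{S^1}\abs{G_\theta}\,d\theta$. Then \eqref{Gderivative} gives $G_\theta=2xx_t$, and Cauchy--Schwarz together with $\int_{S^1}x^2\,d\theta=1$ (from \eqref{xpowerintegral}) and $E(t)=4\int_{S^1}x_t^2\,d\theta\equiv E_0$ (from \eqref{Edef}) yields $\sup_\theta\abs{G}\le\int_{S^1}\abs{xx_t}\,d\theta\le\tfrac12\sqrt{E_0}$. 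Combining, $\sup_\theta\abs{F}\le\abs{\sigma}\tfrac12\sqrt{E_0}+\tfrac14 E_0=:K^2$, a bound uniform in $t$, valid on the global existence interval $[0,\infty)$ from Theorem \ref{globalthm}.

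For the case $\lambda=3$ (so $\gamma=1$, $x=\eta_\theta$) the work is essentially already done in the proof of Theorem \ref{globalthm}: there one obtains \eqref{etattpressure}, namely $\eta_{tt}=3\sigma P$, with the uniform pressure bound $\norm{P(t,\cdot)}_{C^0}\le\bigl(\int_{S^1}[u_0-\sigma]^2\,d\theta\bigr)^{1/2}$ valid as long as $\eta_\theta\ge 0$, i.e.\ on $[0,T)$. Integrating in time from the initial value $\eta_t(0,\theta)-\sigma=u_0(\theta)-\sigma$ gives $\abs{G(t,\theta)}=\abs{\eta_t(t,\theta)-\sigma}\le\norm{u_0-\sigma}_{C^0}+3\abs{\sigma}\,t\,\bigl(\int_{S^1}[u_0-\sigma]^2\,d\theta\bigr)^{1/2}$, and multiplying by $3\abs{\sigma}$ yields the linear-in-time bound $\sup_\theta\abs{F}\le K^2+Ct$ with $K^2=3\abs{\sigma}\norm{u_0-\sigma}_{C^0}$ and $C=9\sigma^2\bigl(\int_{S^1}[u_0-\sigma]^2\,d\theta\bigr)^{1/2}$.

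I expect the only genuinely new estimate to be the $G$-bound for $\lambda=2$; the $\lambda=3$ bound is a repackaging of the pressure estimate already proved in Theorem \ref{globalthm}. The single point requiring care is asserting each bound on the correct interval --- for $\lambda=3$ the pressure bound is exactly what needs $\eta_\theta\ge 0$, hence validity on $[0,T)$, whereas for $\lambda=2$ conservation of $E$ and the constraints \eqref{xpowerintegral}, \eqref{Gintegral} hold for all $t$. I do not anticipate a serious obstacle beyond bookkeeping; the conceptual content, already visible in \eqref{Fdef}, is that $\lambda=2$ and $\lambda=3$ are singled out precisely because in the former the energy is conserved and in the latter the energy term disappears, which is why no comparable bound is available for other $\lambda$.
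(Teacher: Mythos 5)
Your proposal is correct and follows essentially the same route as the paper: for $\lambda=3$ it reuses the pressure bound $\eta_{tt}=3\sigma P$ from Theorem \ref{globalthm} to get linear growth of $G=\eta_t-\sigma$, and for $\lambda=2$ it combines constancy of $E$ with $\int_{S^1}x^2\,d\theta=1$ and Cauchy--Schwarz to bound $\sup_\theta\lvert G\rvert$ via $\int_{S^1}\lvert G_\theta\rvert\,d\theta$. Your $\lambda=2$ estimate is in fact spelled out slightly more carefully than the paper's (which writes the product of the integrals where the product of their square roots is meant), but the argument is the same.
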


\begin{proof}
In the case $\lambda=3$, we have already established this in the proof of Theorem \ref{globalthm}, since there
$$ F(t,\theta) = 3 \sigma G(t,\theta),$$
and $G=(\eta_t-\sigma)$ grows at most linearly in time because $\eta_{tt}$ is bounded.
In the case $\lambda=2$, the forcing function is given by
$$ F(t,\theta) = \sigma(\eta_t-\sigma) - \tfrac{1}{4} E(t),$$
and $E(t)$ is constant in time for $\lambda=2$, and given by
$$ E(t) = E(0) = \int_{S^1} u_0'(\theta)^2 \, d\theta.$$
This implies that $\int_{S^1} x_t^2 \, d\theta$ is constant in time, and we thus get a uniform bound for
$(\eta_t-\sigma)$ by the Poincar\'e inequality, since
$$ \sup_{\theta\in S^1} \lvert \eta_t-\sigma\rvert \le \int_{S^1} \lvert \eta_{t\theta}\rvert \, d\theta
= 2 \int_{S^1} \lvert x x_t\rvert \,d\theta \le \int_{S^1} x^2 \, d\theta \, \int_{S^1} x_t^2 \, d\theta,$$
and the right side is constant in time.
\end{proof}

One might hope that a polynomial-in-time bound like this is true for other values of $\lambda$; if it were,
the technique of the breakdown proof we will give later would also show the same breakdown phenomenon for
all values of $\lambda$. Ultimately the only thing we need is that the forcing function grows like a polynomial in
time, because it will be less than the exponential decay we get in general from the equation whenever $\lambda>1$.
If we could establish any kind of polynomial estimate for the energy $E(t)$ given by \eqref{Edef} for other
values of $\lambda$, we would obtain the same breakdown result here proved for $\lambda=2$ and $\lambda=3$. However the fact that
Sarria-Saxton~\cite{SS1} showed that the basic breakdown mechanism changes when $\lambda>5$ makes clear that this could
only be hoped for if $\lambda\in (1,5)$.

The main tools we use to establish breakdown are the following simple result which applies for any ODE
for fairly general forcing functions (and thus will apply here for the individual particles $x(t,\theta),y(t,\theta)$ for
each individual $\theta\in S^1$). The first lemma gives an upper bound for the solution in terms of the forcing function,
while the second establishes that solutions will eventually reach zero if their velocity is sufficiently negative.
Our philosophy is that although the forcing function depends implicitly and nonlocally on the solution for all values
of $\theta$, each individual particle feels a force $F(t)$ that is some given function of time, bounded on finite time
intervals, and thus we can treat it as essentially an external force.

\begin{lemma}\label{upperboundlem}
Suppose $\phi$ satisfies the second-order ODE
$$ \phi''(t) = F(t) \phi(t)$$
on some interval $[0,T)$, where $T$ may be infinite,
and assume $F(t)\le f(t)^2$ for some nonnegative differentiable increasing function $f$.

Then there is a $C$ such that
\begin{equation}\label{Rbound}
\frac{\phi'(t)}{\phi(t)} \le C+f(t)
\end{equation}
for all $t\in [0,T)$.
\end{lemma}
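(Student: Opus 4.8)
The plan is to reduce the linear second-order equation to a scalar Riccati inequality via the logarithmic derivative $R(t) = \phi'(t)/\phi(t)$. This quantity is well-defined precisely because $\phi$ stays positive on $[0,T)$ — which is the regime we care about, since $\phi$ plays the role of $x(t,\theta)$ before breakdown, and the stated conclusion only makes sense where $\phi\neq 0$; I would flag $\phi>0$ as the (implicit) standing hypothesis. Differentiating $R$ and substituting $\phi''=F\phi$ gives $R' = \phi''/\phi - (\phi'/\phi)^2 = F(t) - R(t)^2$, and the assumption $F(t)\le f(t)^2$ turns this into the differential inequality $R'(t) \le f(t)^2 - R(t)^2 = \big(f(t)-R(t)\big)\big(f(t)+R(t)\big)$. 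This already encodes the mechanism: the force can drive $R$ up only to about the level $f$, because the moment $R$ exceeds $f$ the right-hand side becomes negative and $R$ is pushed back down.

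To convert this into the bound I would set $S(t) = R(t) - f(t)$ and compute, using $R = S+f$, that $S' = R' - f' \le -S(t)\big(S(t)+2f(t)\big) - f'(t)$. The decisive observation is that whenever $S(t)>0$ one has $S'(t)<0$: the factor $S+2f$ is strictly positive (as $f\ge 0$ and $S>0$), so the first term is strictly negative, while $-f'\le 0$ since $f$ is increasing. Hence $S$ is strictly decreasing on every subinterval where it is positive, so it can never climb above its starting value nor above zero.

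Making this rigorous is the single delicate point, and I would handle it by a last-crossing (barrier) argument. Put $C = \max\{S(0),\,0\} = \max\{\phi'(0)/\phi(0) - f(0),\,0\}$ and suppose toward a contradiction that $S(t_1)>C$ for some $t_1\in[0,T)$. Since $S(0)\le C$, continuity produces a last time $t_0<t_1$ with $S(t_0)=C$ and $S(t)>C\ge 0$ on all of $(t_0,t_1]$; but then $S'<0$ throughout that interval by the observation above, forcing $S(t_1)<S(t_0)=C$, a contradiction. Therefore $S(t)\le C$ on $[0,T)$, which is exactly $\phi'(t)/\phi(t) \le C + f(t)$.

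The main obstacle here is bookkeeping rather than genuine difficulty: the Riccati reduction is immediate and the whole argument is driven by the sign of a single product. The only subtleties are the rigorous form of the barrier argument and confirming that $\phi>0$ (so that $R$ is defined) on the whole interval. It is worth remarking that only $f\ge 0$ and $f'\ge 0$ enter, so \emph{differentiable and increasing} is precisely the hypothesis used, and no lower bound on $F$ is required at all.
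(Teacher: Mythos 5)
Your proof is correct and follows essentially the same route as the paper: both reduce to the Riccati inequality $R'\le f^2-R^2$ for $R=\phi'/\phi$ and then show that the sign of $R-f$ forces $R\le\max\{R(0),f(t)\}$ (the paper phrases this via a Gronwall-type bound on $Q=f-R$, you via a last-crossing argument on $S=R-f$, which are the same mechanism). Your explicit flagging of the implicit hypothesis $\phi>0$ on $[0,T)$ is a fair and worthwhile observation.
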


\begin{proof}
Define $R(t) = \phi'(t)/\phi(t)$. Then $R$ satisfies the Riccati inequality
\begin{equation}\label{Rinequality}
R'(t) = F(t) - R(t)^2 \le f(t)^2 - R(t)^2.
\end{equation}
If $R(t)$ is ever larger than $f(t)$, then $R(t)$ must decrease; thus if $f(0)<R(0)$, then $R(t)<R(0)$ for all time
until $R(t)$ possibly crosses $f(t)$.
If $R(t)$ is smaller than
$f(t)$, then the difference $Q(t) = f(t)-R(t)$ satisfies
$$ Q'(t) \ge f'(t) + R(t)^2 - f(t)^2 \ge f'(t) + Q(t)^2 - 2f(t) Q(t) \ge -2f(t) Q(t).$$
In particular if $Q$ is ever positive, it will always be positive. This shows
that $R(t)\le f(t)$ for all time if it is true for any time. Combining shows that
$$R(t)\le \max\{R(0), f(t)\}\le C+f(t),$$
which is equivalent to \eqref{Rbound}.
\end{proof}

\begin{lemma}\label{breakdownlemma}
Suppose
\begin{equation}\label{myODE}
\phi''(t) = F(t) \phi(t)
\end{equation}
for some continuous function $F$ on a maximal time interval $[0,T)$.
If $\phi(t_0)>0$ and $\phi'(t_0)/\phi(t_0)$ is sufficiently negative, then $\phi(t_*)=0$ for
some $t_*\in (t_0,T)$.
\end{lemma}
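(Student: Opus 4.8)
The plan is to reuse the Riccati substitution already introduced in Lemma~\ref{upperboundlem}, but to exploit it in the opposite direction: when the logarithmic derivative $R := \phi'/\phi$ starts out very negative, the quadratic term $-R^2$ in the Riccati equation overwhelms the forcing term and drives $R$ to $-\infty$ in a short time, which in turn forces $\phi$ down to $0$. Concretely, as long as $\phi(t)>0$ the quantity $R(t)=\phi'(t)/\phi(t)$ is defined and differentiable, and \eqref{myODE} gives the Riccati equation $R' = F - R^2$, exactly as in \eqref{Rinequality}. Since $R=(\ln\phi)'$, it suffices to show that $R\to-\infty$ at some finite $t_*<T$: then $\ln\phi\to-\infty$ as $t\to t_*^-$, hence $\phi(t_*)=0$, and $\phi$ stays positive on $[t_0,t_*)$ because $R$ is finite there.

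First I would fix a window. Because $t_0<T$, choose $h>0$ with $t_0+h<T$ (any $h>0$ works if $T=\infty$); continuity of $F$ on the compact set $[t_0,t_0+h]$ yields a bound $A:=\sup_{[t_0,t_0+h]}F<\infty$. The elementary observation is that whenever $R(t)^2\ge 2A$ one has $R'(t)=F-R^2\le A-R^2\le -\tfrac12 R^2<0$. Thus if $R(t_0)<-\sqrt{2A}$, a bootstrap argument shows the condition $R^2\ge 2A$ persists: $R$ is strictly decreasing wherever it holds, so $R\le R(t_0)<-\sqrt{2A}$ and $R^2\ge R(t_0)^2>2A$ remains strictly valid for as long as the solution exists. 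Consequently $R'\le-\tfrac12 R^2$ throughout. Comparing with the solution $S$ of the separable equation $S'=-\tfrac12 S^2$, $S(t_0)=R(t_0)$, namely $S(t)=\bigl(1/R(t_0)+\tfrac12(t-t_0)\bigr)^{-1}$, the standard differential-inequality comparison gives $R(t)\le S(t)$, and $S$ decreases to $-\infty$ at the time $t_*=t_0+2/\lvert R(t_0)\rvert$.

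It then remains only to pin down the threshold for ``sufficiently negative''. Requiring $\lvert R(t_0)\rvert>\max\{\sqrt{2A},\,2/h\}$ guarantees both that the decreasing regime is entered immediately ($R(t_0)^2>2A$) and that the blow-up time satisfies $t_*=t_0+2/\lvert R(t_0)\rvert<t_0+h<T$. Hence $R(t)\to-\infty$ as $t$ increases to some time in $(t_0,t_0+h)$, and integrating $R=(\ln\phi)'$ shows $\int_{t_0}^{t}R\,d\tau\to-\infty$, so $\phi(t)\to0$ and by continuity $\phi(t_*)=0$ with $t_*\in(t_0,T)$. The only genuine subtlety, which this ordering resolves, is the apparent circularity between the force bound $A$, defined on a fixed window, and the requirement that the collision occur inside that window: by fixing $h$ first and only then taking $\lvert R(t_0)\rvert$ large, the blow-up is made to happen arbitrarily close to $t_0$, well before either the bound $A$ or the maximal time $T$ becomes relevant. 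I expect the comparison step and the bootstrap that $R^2\ge 2A$ persists to be the points requiring the most care, though both are routine once the window has been fixed in advance.
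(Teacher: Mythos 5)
Your proof is correct, but it takes a genuinely different route from the paper's. You run the Riccati substitution $R=\phi'/\phi$ forward into its blow-up regime: once $R(t_0)^2$ exceeds twice a local sup bound $A$ of $F$, the inequality $R'\le -\tfrac12 R^2$ self-perpetuates, and comparison with $S'=-\tfrac12 S^2$ forces $R$ (and, since $\int S\,d\tau$ diverges logarithmically, also $\ln\phi$) to $-\infty$ by time $t_0+2/\lvert R(t_0)\rvert$. The paper instead exploits the linearity of \eqref{myODE} directly: it takes the companion solution $g$ with $g(t_0)=1$, $g'(t_0)=0$, disposes of the case where $g$ itself vanishes by Sturm comparison, and otherwise writes $\phi$ by reduction of order as $\phi(t_0)g(t)\bigl(1+C\int_{t_0}^t g^{-2}\,d\tau\bigr)$, so that $\phi$ turns negative precisely when $C<-1/\int_{t_0}^T g^{-2}\,d\tau$. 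The trade-off: the paper's threshold is essentially sharp (it is the exact dividing line when $g$ stays positive) but is expressed through the non-explicit function $g$ over the whole interval, whereas yours is explicit and purely local --- it needs only $\sup F$ on a window $[t_0,t_0+h]$ and yields a concrete upper bound on the collision time, at the cost of a cruder constant; for the application in Theorem \ref{breakdownthm}, where $x_t/x$ is driven arbitrarily negative, either threshold suffices. Two cosmetic points: your opening claim that ``$R\to-\infty$ suffices'' is not literally true (a divergent $R$ can have a finite integral), but your actual argument never relies on it, since you integrate the explicit comparison function $S$; and $\sqrt{2A}$ should be read as $\sqrt{2\max\{A,0\}}$ in case $F\le 0$ on the window. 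Neither affects the validity of the proof.
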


\begin{proof}
Let $g$ denote the solution of \eqref{myODE} satisfying
$$ g(t_0) = 1, \qquad g'(t_0)=0.$$
If $g(t)$ reaches zero in finite time, then by the Sturm comparison theorem, $\phi(t)$ must also reach zero
whenever $\phi'(t_0)/\phi(t_0)\le 0$.

Otherwise $g(t)$ is always positive, and the general solution of \eqref{myODE} is given by
$$ \phi(t) = \phi(t_0) g(t)\Big( 1 + C \int_{t_0}^t \frac{d\tau}{g(\tau)^2} \Big), \qquad C = \frac{\phi'(t_0)}{\phi(t_0)}$$
as can easily be verified by direct substitution.
(This is just reduction of order.)
The function $\phi(t)$ will turn negative for some $t$ as long as
$$ C < -1/\int_{t_0}^T \frac{d\tau}{g(\tau)^2}.$$
\end{proof}

The next result tells us about the effect of nonzero angular momentum. It is familiar from basic celestial mechanics:
even for a not-too-singular force directed toward the origin, a particle will not reach the origin if there is nonzero angular momentum,
while a particle with zero angular momentum will reach the origin in finite time. In our context this will give a lower bound on the
radial coordinate $r=\sqrt{x^2+y^2}$, which gives global existence in Theorem \ref{globalposmom} if the angular momentum is never zero.

\begin{lemma}\label{angmomboundlem}
Suppose $(x,y)$ is a planar system satisfying the ODE
\begin{equation}\label{generalsolar}
\ddot{x}(t) = F(t) x(t), \qquad \ddot{y}(t) = F(t) y(t),
\end{equation}
where $F$ is continuous and bounded on $[0,T]$. Let
\begin{equation}\label{angmom}
\omega_0 = x(0) \dot{y}(0) - y(0) \dot{x(0)} \qquad \text{and}\qquad r(t)^2 = x(t)^2 + y(t)^2.
\end{equation}
Then if $\omega_0$ is nonzero, $r(t)$ cannot reach zero on $[0,T]$.
\end{lemma}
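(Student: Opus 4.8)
The plan is to exploit the classical centrifugal barrier: I would show that the \emph{speed} of the particle grows at most linearly in time, while conservation of angular momentum forces the speed to blow up as $r\to 0$, producing a contradiction. No sign condition or entropy argument is needed; this is purely the statement that a particle with nonzero angular momentum and a bounded central force cannot fall into the origin in finite time.

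First I would record that the angular momentum is conserved, exactly as in Corollary \ref{angularmomentumcor}: differentiating $\omega(t) = x\dot y - y\dot x$ and using \eqref{generalsolar} gives $\dot\omega = x\ddot y - y\ddot x = F(xy-yx) = 0$, so $\omega(t)\equiv \omega_0$. Writing $v(t) = \sqrt{\dot x(t)^2 + \dot y(t)^2}$ for the speed, the Lagrange identity $(x\dot y - y\dot x)^2 + (x\dot x + y\dot y)^2 = (x^2+y^2)(\dot x^2+\dot y^2)$ yields the Cauchy--Schwarz bound $r(t)^2 v(t)^2 \ge \omega_0^2$ for all $t$. Since $\omega_0\ne 0$, this already shows $r(0)>0$ and, more importantly, that $r$ can be bounded below as soon as $v$ is bounded above.

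Next I would control the speed. Because $(x,y)$ solves the linear system \eqref{generalsolar} with $F$ continuous on the compact interval $[0,T]$, the solution exists and is continuous on all of $[0,T]$, so $r(t)\le R$ for some constant $R$, and $M := \sup_{[0,T]}\lvert F\rvert <\infty$. Differentiating the kinetic energy gives $\tfrac{d}{dt}v^2 = 2(\dot x\ddot x + \dot y\ddot y) = 2F(x\dot x + y\dot y)$, hence $v\dot v = F(x\dot x+y\dot y)$. Using $\lvert x\dot x + y\dot y\rvert \le rv$ together with $\lvert F\rvert \le M$, and noting that $v$ never vanishes (since $rv\ge\lvert\omega_0\rvert>0$, so the quotient $\dot v = F(x\dot x+y\dot y)/v$ is legitimate), I obtain $\dot v \le M r \le MR$. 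Therefore $v(t)\le v(0) + MR\,t$ grows at most linearly and is bounded by some constant $V$ on $[0,T]$.

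Combining the two steps finishes the argument: the inequality $r^2v^2\ge\omega_0^2$ gives $r(t)\ge \lvert\omega_0\rvert / v(t) \ge \lvert\omega_0\rvert / V > 0$ for all $t\in[0,T]$, so $r$ stays bounded away from zero and cannot reach the origin. I do not expect a genuine obstacle here; the only point requiring care is the bookkeeping that $v>0$ throughout (which is exactly what the nonvanishing of $\omega_0$ guarantees), after which the estimate is the standard observation that the radial force $Fr$, being bounded, cannot overcome the centrifugal term $\omega_0^2/r^3$ in finite time.
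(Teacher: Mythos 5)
Your proof is correct, and it takes a genuinely different route from the paper's. The paper splits the kinetic energy into radial and angular parts, $\dot x^2+\dot y^2=\dot r^2+\omega_0^2/r^2$, and then estimates the radial energy $\dot r^2+\omega_0^2/r^2$ on each maximal interval where $r$ is decreasing, using only the one-sided bound $F\ge -\overline{F}$; this produces an explicit lower bound for $r(t_2)$ in terms of the data at the left endpoint $t_1$ of that interval, and the proof closes with a (somewhat delicate) argument that there are only finitely many such decreasing intervals. You instead bound the total speed $v=\sqrt{\dot x^2+\dot y^2}$ globally on $[0,T]$ --- which is already guaranteed because the linear system with continuous coefficients has a $C^2$ solution on the compact interval, so your estimate $\dot v\le MR$ only confirms what continuity plus compactness give --- and then read off $r\ge \lvert\omega_0\rvert/V$ from the Lagrange identity $r^2v^2\ge (x\dot y-y\dot x)^2=\omega_0^2$. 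Your route is cleaner: it avoids the decomposition into decreasing intervals and the ``finitely many turning points'' claim entirely, and it makes transparent that the only content is conservation of $\omega$ plus boundedness of the solution. What the paper's version buys in exchange is a quantitative lower bound on $r(t_2)$ depending only on $r(t_1)$, $\dot r(t_1)$, $\omega_0$, and $\overline{F}$, which would survive in a setting where the solution is known only on a half-open maximal interval and compactness cannot be invoked. One further remark: since the solution is $C^1$ on the closed interval, the qualitative conclusion follows in one line --- if $r(t_*)=0$ then $x(t_*)=y(t_*)=0$, so the conserved quantity $x\dot y-y\dot x$ vanishes at $t_*$, contradicting $\omega_0\ne 0$ --- so both your argument and the paper's are really aimed at the quantitative lower bound rather than the bare non-vanishing statement.
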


\begin{proof}
Conservation of angular momentum ensures that
$$ x\dot{y} - y\dot{x} = \omega_0,$$
so that
$$ \dot{x}^2 + \dot{y}^2 = (x\dot{x}+y\dot{y})^2 + (x\dot{y} - y\dot{x})^2 = \dot{r}^2 + \frac{\omega_0^2}{r^2}.$$
We then obtain
$$ \frac{d}{dt} \left( \dot{r}^2 + \frac{\omega_0^2}{r^2}\right) = 2 \big(\dot{x} \ddot{x} + 2 \dot{y} \ddot{y}\big) = 2 F(t)( x\dot{x} + y\dot{y})
= 2F(t) r(t) \dot{r}(t).$$

Observe that $r(t)$ can only be made small if it is decreasing on some interval $[t_1,t_2]$, so to get an upper bound on this energy we define
$$ \overline{F} = \max\{ -\inf_{0\le t\le T} F(t), 0\}.$$
Then $-F(t) \le \overline{F}$ for all $t\in [0,T]$ and $\overline{F}\ge 0$, and
integrating over $[t_1,t_2]$ assuming that $\dot{r}(t)\le 0$ on $[t_1,t_2]$ gives
\begin{align*}
\dot{r}(t_2)^2 + \frac{\omega_0^2}{r(t_2)^2} &= \dot{r}(t_1)^2 + \frac{\omega_0^2}{r(t_1)^2} + 2 \int_{t_1}^{t_2} F(t) r(t) \dot{r}(t) \, dt \\
 &\le  \dot{r}(t_1)^2 + \frac{\omega_0^2}{r(t_1)^2} + \overline{F} \big( r(t_1)^2 - r(t_2)^2\big) \le  \dot{r}(t_1)^2 + \frac{\omega_0^2}{r(t_1)^2} +
 \overline{F}  r(t_1)^2.
 \end{align*}
In particular we obtain
$$ r(t_2) \ge \frac{\lvert \omega_0\rvert r(t_1)}{\sqrt{r(t_1)^2 \dot{r}(t_1)^2 + \omega_0^2 +
 \overline{F}  r(t_1)^4} },$$
 and in particular $r(t_2)$ is positive since $\overline{F}$ is finite by assumption.

There can only be finitely many such intervals where $r$ can decrease on $[0,T]$ since $r$ can only decrease when either $x$ or $y$
is decreasing, and a linear differential equation with bounded force coefficient can only have a discrete set of turning points in
a compact interval.
 \end{proof}

\begin{remark}\label{reallybadforce}
Of course, if we allow the forcing function to be something like $F(t) = -\frac{k^2}{(1-t)^2}$, then the particle
can reach zero in finite time. The change of time variable $s=-\ln{(1-t)}$ in this case turns each equation in the system \eqref{generalsolar}
into
$$ \frac{d^2x}{ds^2} + \frac{dx}{ds} + k^2 x = 0,$$
which will have infinitely many oscillations up to $t=1$ if and only if $k>\tfrac{1}{2}$. Thus if $k>\tfrac{1}{2}$ the system will spiral
around the origin infinitely many times until reaching the origin at $t=1$. For bounded $F(t)$, things are substantially simpler, but note
that we only have reasonable bounds on $F(t)$ in special cases (in particular $\lambda=2$ and $\lambda=3$ in the present context).
\end{remark}

One further lemma simplifies our considerations, which is the reflection symmetry of the equation \eqref{mubgeneral}--\eqref{mubICs}. Note that
since $m(t,\theta) = \sigma - u_{\theta\theta}(t,\theta)$, and $u_{\theta\theta}$ must change sign if $u$ is not constant, the condition that $m$
changes sign has somewhat different consequences for the convexity of $u$
depending on whether $\sigma$ is positive or negative. However these are illusory, and the
following proposition shows that if $\sigma\ne 0$, we can assume $\sigma>0$ without loss of generality. This proposition is well-known and appears
in many places, e.g., in \cite{FLQ}.

\begin{proposition}\label{sigmasign}
If $v(t,\theta) := -u(t,1-\theta)$, with $u$ satisfying \eqref{mubgeneral}--\eqref{mubICs}, then $v$  satisfies the equation
$$ n_t + v n_{\theta} + \lambda v_{\theta} n = 0, \qquad n = \mu(v) - v_{\theta\theta}.$$
Hence any result that applies with $\sigma=\mu(u)>0$ also applies to $v$ for $\mu(v)<0$.
\end{proposition}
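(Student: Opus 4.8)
The plan is to verify the claimed equation for $v$ by direct substitution, tracking how the reflection $\theta\mapsto 1-\theta$ composed with the overall sign flip acts on each derivative. First I would record the elementary transformation rules: writing $w=1-\theta$, so that $v(t,\theta)=-u(t,w)$, the chain rule gives $v_\theta(t,\theta)=u_\theta(t,w)$, $v_{\theta\theta}(t,\theta)=-u_{\theta\theta}(t,w)$, and $v_t(t,\theta)=-u_t(t,w)$. The point to keep straight is that each spatial derivative in $\theta$ contributes an extra factor of $-1$ from the reflection, so the sign flips alternate with the order of differentiation.

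Next I would compute the mean and the transformed momentum. Since $\theta\mapsto 1-\theta$ is a measure-preserving involution of $S^1=\mathbb{R}/\mathbb{Z}$, the substitution $w=1-\theta$ gives $\mu(v)=-\int_{S^1}u(t,1-\theta)\,d\theta=-\mu(u)=-\sigma$. Combining this with the rule for $v_{\theta\theta}$ yields $n(t,\theta)=\mu(v)-v_{\theta\theta}(t,\theta)=-\sigma+u_{\theta\theta}(t,w)=-m(t,w)$, so the momentum also reflects and flips sign. Differentiating this identity in turn gives $n_t(t,\theta)=-m_t(t,w)$ and $n_\theta(t,\theta)=m_\theta(t,w)$. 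With these in hand, substituting into $n_t+vn_\theta+\lambda v_\theta n$ and evaluating every factor at the reflected point $w$ produces $-m_t+(-u)m_\theta+\lambda u_\theta(-m)=-(m_t+u m_\theta+\lambda u_\theta m)$, which vanishes because $u$ solves \eqref{mubgeneral} at $(t,w)$. This establishes that $v$ satisfies the same equation, now with mean $\mu(v)=-\sigma$.

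For the reduction, given $u$ with $\mu(u)<0$ the field $v$ has $\mu(v)=-\mu(u)>0$, so the $\sigma>0$ theory applies to $v$. To transfer the conclusions I would note that $n_0(\theta)=-m_0(1-\theta)$ changes sign exactly when $m_0$ does, and that a positive-to-negative crossing of $m_0$ at $\theta_*$ corresponds to a positive-to-negative crossing of $n_0$ at $1-\theta_*$ (the reflection reverses the direction of traversal while the overall minus sign reverses it back); moreover $v_\theta(t,1-\theta_*)=u_\theta(t,\theta_*)$, so the blow-up $u_\theta\to-\infty$ is equivalent to $v_\theta\to-\infty$. I do not expect any serious obstacle: the entire argument is the chain rule, and the only real work is the sign bookkeeping through the reflection—in particular confirming that both the crossing direction and the sign of the divergence of $u_\theta$ are preserved, so that the $\sigma>0$ case of Theorem \ref{mainthm} applied to $v$ yields precisely the $\sigma<0$ statement for $u$.
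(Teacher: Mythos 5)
Your computation is correct and is essentially identical to the paper's proof: both track the sign flips of $v_t$, $v_\theta$, $v_{\theta\theta}$ under the reflection $\zeta(\theta)=1-\theta$, deduce $n=-m\circ\zeta$, and substitute into the equation to get an overall factor of $-1$. Your extra bookkeeping on how sign changes of $m_0$ and the blow-up of $u_\theta$ transfer to $v$ is a correct (and welcome) elaboration of what the paper leaves implicit in its final sentence.
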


\begin{proof}
Clearly if $\zeta$ denotes the reflection map $\zeta(\theta) = 1-\theta$ on the circle, then $v:=-u\circ\zeta$ satisfies $v_t = -u_t\circ\zeta$ and
$v_{\theta} = u_{\theta}\circ\zeta$. Thus we get
$$(\mu-\partial_{\theta}^2)v = -(\mu-\partial_{\theta}^2)u\circ\zeta,$$
so that if $n = \mu(v) - v_{\theta\theta}$, we have $ n = -m\circ\zeta$. This now implies $n_t = -m_t\circ\zeta$ and $n_{\theta} = m_{\theta}\circ\zeta$.
Thus composing \eqref{mubgeneral} with $\zeta$ gives
\begin{align*}
0&= m_t\circ\zeta + (u\circ\zeta) \, (m_{\theta}\circ\zeta) + \lambda (u_{\theta}\circ\zeta)\, (m\circ\zeta) \\
&= -n_t - v n_{\theta} + \lambda (v_{\theta}) (-n) = 0.
\end{align*}
This implies that $(v,n)$ satisfies the same system as $(u,m)$ in \eqref{mubgeneral}--\eqref{mubICs}. However since
$\mu(v) = -\mu(u)$, anything we may prove assuming $\mu(u)>0$ will equally apply to $v$ when $\mu(v)<0$.
\end{proof}

In light of Proposition \ref{sigmasign}, we will always assume that $\sigma>0$ without loss of generality.

\section{Proof of Theorem \ref{mainthm}}\label{sectionproofs}

First we show that if the momentum is everywhere positive or everywhere negative, then the solution of equations
\eqref{mubgeneral}--\eqref{mubICs} exists globally and gives a diffeomorphism. This result is already contained in
the original papers \cite{KLM} and \cite{LMT}, based on analytic inequalities (and generalized for any value of $\lambda$ in
\cite{tiglayvizman}), but our perspective here is different.
By Proposition \ref{sigmasign}, we may assume without loss of generality that the initial momentum is strictly positive.

\begin{theorem}\label{globalposmom}[Theorem \ref{mainthm}, ``if'' case]
If $\lambda=2$ or $\lambda=3$, and if $m_0(\theta) = \sigma - u_0''(\theta)$, with $\sigma = \mu(u_0)$, is positive for all $\theta\in S^1$,
then the solution of \eqref{mubgeneral}--\eqref{mubICs} exists for all time, and the flow $\eta$ given by \eqref{flowequation}
remains a $C^2$ diffeomorphism of the circle for all time.
\end{theorem}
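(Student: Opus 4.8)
The plan is to work in the transformed variables $(x,y)$ of Theorem \ref{transformthm}, where $x(t,\theta)=\eta_\theta(t,\theta)^{1/\gamma}$, and to reduce the whole statement to the single assertion that $x$ never reaches zero. Indeed, since $\eta_\theta=x^\gamma$, the flow $\eta(t,\cdot)$ is an orientation-preserving $C^2$ diffeomorphism of $S^1$ precisely when $x(t,\cdot)>0$, and $u=\eta_t\circ\eta^{-1}$ is then recovered as a $C^2$ velocity field. By Proposition \ref{sigmasign} I may assume $\sigma>0$, and since $m_0$ is continuous on the compact circle the hypothesis upgrades to a uniform bound $m_0(\theta)\ge c>0$. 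Existence of $(x,y)$ is supplied by Theorems \ref{localexistence} and \ref{globalthm}: for $\lambda=2$ the solution $x$ is already global, while for $\lambda=3$ it persists as long as $x>0$ and in fact slightly beyond any first zero. In both cases it therefore suffices to rule out a first zero of $x$ occurring in finite time.

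I would argue by contradiction. Suppose $t_*<\infty$ is the first time at which $x(t_*,\theta_*)=0$ for some $\theta_*$. By minimality, $x(t,\cdot)>0$ for $t<t_*$, so by continuity $x(t_*,\cdot)\ge 0$ and $\theta_*$ is an interior minimum of the $C^1$ function $x(t_*,\cdot)$ on the (boundaryless) circle; hence $x_\theta(t_*,\theta_*)=0$. The key observation is that the defining formula \eqref{xydef} for $y$ then forces $y(t_*,\theta_*)=0$ as well: the term $\sigma\,x\int_0^t x^\gamma\,d\tau$ vanishes because $x(t_*,\theta_*)=0$, while the term $-\gamma x_\theta$ vanishes because $\theta_*$ is a spatial minimum. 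Consequently the planar particle $\bigl(x(t,\theta_*),y(t,\theta_*)\bigr)$ actually reaches the origin, $r(t_*,\theta_*)^2=x(t_*,\theta_*)^2+y(t_*,\theta_*)^2=0$, at the finite time $t_*$.

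This is exactly the scenario excluded by Lemma \ref{angmomboundlem}. On the compact interval $[0,t_*]$ the forcing term $F(\cdot,\theta_*)$ is bounded, by Lemma \ref{forcegrowth} (uniformly when $\lambda=2$, and by $K^2+Ct$ when $\lambda=3$). By Corollary \ref{angularmomentumcor} the conserved angular momentum of this particle is $m_0(\theta_*)\ge c>0$, which is nonzero, so Lemma \ref{angmomboundlem} forbids $r(t,\theta_*)$ from reaching zero on $[0,t_*]$, contradicting $r(t_*,\theta_*)=0$. Hence no first zero of $x$ can exist, and $x(t,\theta)>0$ for all $t\ge 0$ and all $\theta$. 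For $\lambda=2$ this is immediate from the global existence of $x$; for $\lambda=3$ the absence of a finite first zero promotes the ``up to $T+\varepsilon$'' conclusion of Theorem \ref{globalthm} to existence for all time. In either case $\eta_\theta=x^\gamma>0$ with $x(t,\cdot)\in C^1$, so $\eta(t,\cdot)$ is a $C^2$ circle diffeomorphism for all time, as claimed.

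The step I expect to be the main obstacle is precisely the passage from the breakdown criterion ``$x=0$'' to the celestial-mechanics criterion ``$r=0$'' that makes the angular-momentum lemma applicable. A generic zero of $x$ need not send the particle into the sun, since $y$ could be nonzero there; what saves the argument is that the \emph{first} zero of $x$ is automatically a spatial minimum, which kills $x_\theta$ and hence $y$, so a would-be singularity must in fact drive the particle exactly into the origin, where the nonvanishing angular momentum furnished by $m_0$ blocks it. Verifying that this minimum structure is genuinely forced (and that the requisite regularity $x(t_*,\cdot)\in C^1$ holds up to the candidate blow-up time) is the delicate part; the remaining bookkeeping, including the reconstruction of $u$ and the distinction between the $\lambda=2$ and $\lambda=3$ existence statements, is routine.
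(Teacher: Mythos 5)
Your proposal is correct and follows essentially the same route as the paper's proof: reduce breakdown to a first zero of $x$, observe that at such a zero the point is a spatial minimum so $x_\theta$ and hence $y$ vanish there (via the formula \eqref{xydef}), and then invoke Lemma \ref{angmomboundlem} with the bounded forcing from Lemma \ref{forcegrowth} and the conserved angular momentum $m_0(\theta_*)>0$ to exclude the particle reaching the origin. Your explicit contradiction setup at the first zero time $t_*$ is just a slightly more careful phrasing of the paper's own argument.
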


\begin{proof}
By the definitions \eqref{xydef} of $x$ and $y$, the first time $x$ approaches zero, we must simultaneously have
$y$ approaching zero, since $$y = -\gamma x_{\theta} + \sigma x \int_0^t x(\tau)^{\gamma}\,d\tau.$$
Because $x$ is positive everywhere until it approaches zero, its minimum is also approaching zero, so that
$x_{\theta}$ is approaching zero at the same time; meanwhile the second term in $y$ approaches zero since
$x$ remains bounded and the integral is multiplied by $x$.
Hence the only way $\eta_{\theta} = x^{\gamma}$ can ever reach zero is if both $x$ and $y$ approach zero
simultaneously. 

Theorem \ref{globalthm} shows that for $\lambda=2$ or $\lambda=3$, the only way the solution can break down is
if $\eta_{\theta}$ reaches zero at some finite time $T$, and when this happens we still have at least local existence
in $(x,y)$ coordinates beyond this $T$. 
By Lemma \ref{angmomboundlem}, since $m_0$ is positive and $F$ is bounded by Lemma \ref{forcegrowth}, 
the quantity $x(t,\theta)^2+y(t,\theta)^2$ cannot reach zero on $[0,T]$, and we get a contradiction.
\end{proof}

Now we consider what happens when the sign of the momentum changes. By Proposition \ref{sigmasign}, we may assume
without loss of generality that $\sigma>0$. In this case, the assumption that momentum changes sign means that
$\sigma - u_0''(\theta) < 0$ for some values of $\theta\in S^1$, because it would always be true that
$\sigma - u_0''(\theta)>0$ for some values of
$\theta\in S^1$ (for example when $u_0$ has a local maximum or minimum). The important thing here becomes
$ u_0''(\theta)> \sigma,$ which in particular implies that $u_0$ is convex on some interval. This leads to a convexity
result on the function $x$, and it is on this that all our breakdown results depend.

Our strategy will be as follows: we choose points $a<b<c<d$ such that $m_0(\theta)< 0$ on $(a,d)$: then we establish that
\begin{itemize}
\item $x(t,c)$ has an upper bound independent of $t$ in Lemma \ref{step1lemma};
\item $x(t,b)/x(t,c)$ decays like $e^{-Mt}$ for some $M>0$ in Lemma \ref{step2lemma};
\item and thus $x_t(t,a)/x(t,a)$ can be made as small as we want in Lemma \ref{step3lemma},
\end{itemize}
and from this we use Lemma \ref{breakdownlemma} to show that $x$ must reach zero in finite time.
None of the choices of these points actually matter, although optimizing the choice could lead to a better estimate for the
breakdown time. All that matters is that $a$ and $d$ are chosen so that $m_0(\theta)< 0$ on $(a,d)$,
which we will assume from now on. Essentially all three lemmas rely on the same basic conservation-of-momentum equation
\begin{equation}\label{basicconsmom}
\frac{\partial}{\partial t} \left( \frac{y(t,\theta)}{x(t,\theta)}\right) = \frac{m_0(\theta)}{x(t,\theta)^2},
\end{equation}
which is a direct consequence of the equation \eqref{angularmomentum}. We apply it in three different ways: integrating in time for Lemma
\ref{step1lemma}, integrating in both time and space for Lemma \ref{step2lemma}, and integrating in space only for Lemma \ref{step3lemma}.
The first two lemmas are basically the same as arguments in the original paper of McKean~\cite{mckeanbreakdown}, while the third is a new
argument. See Figure \ref{figure2} for the heuristic in a simple case.

\begin{figure}[!ht]
\includegraphics[scale=0.25]{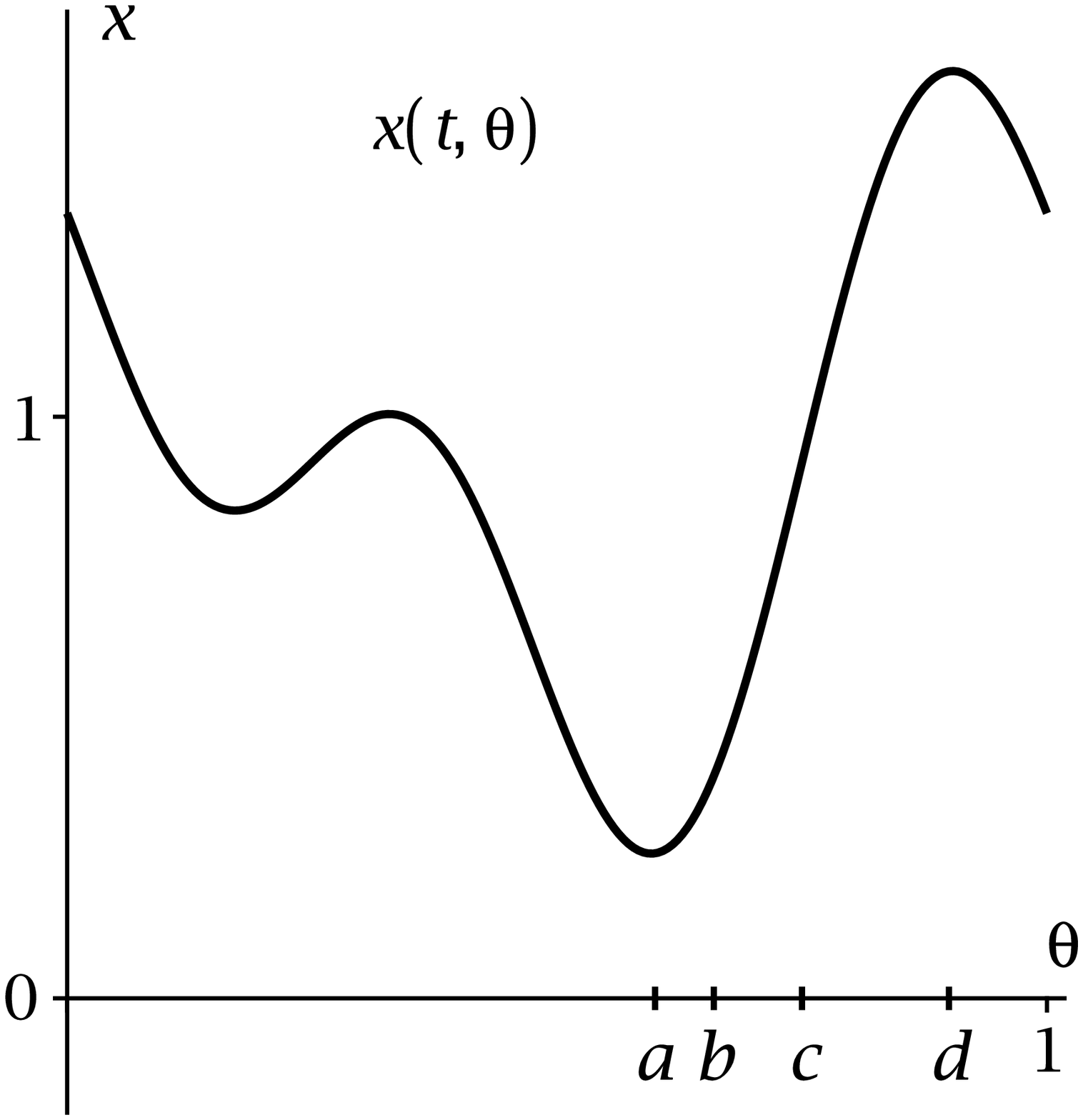} \;
\includegraphics[scale=0.25]{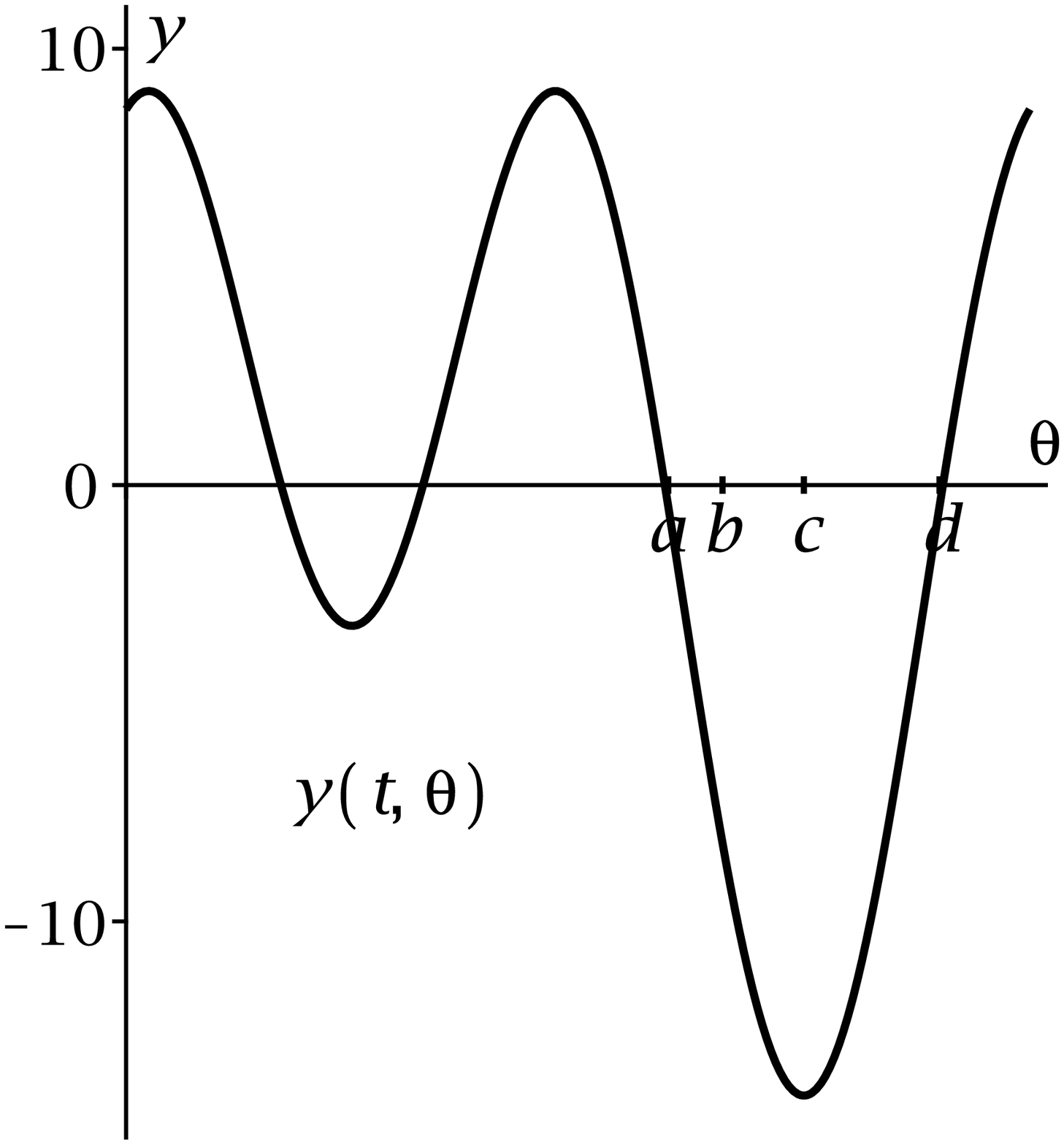} \;
\includegraphics[scale=0.25]{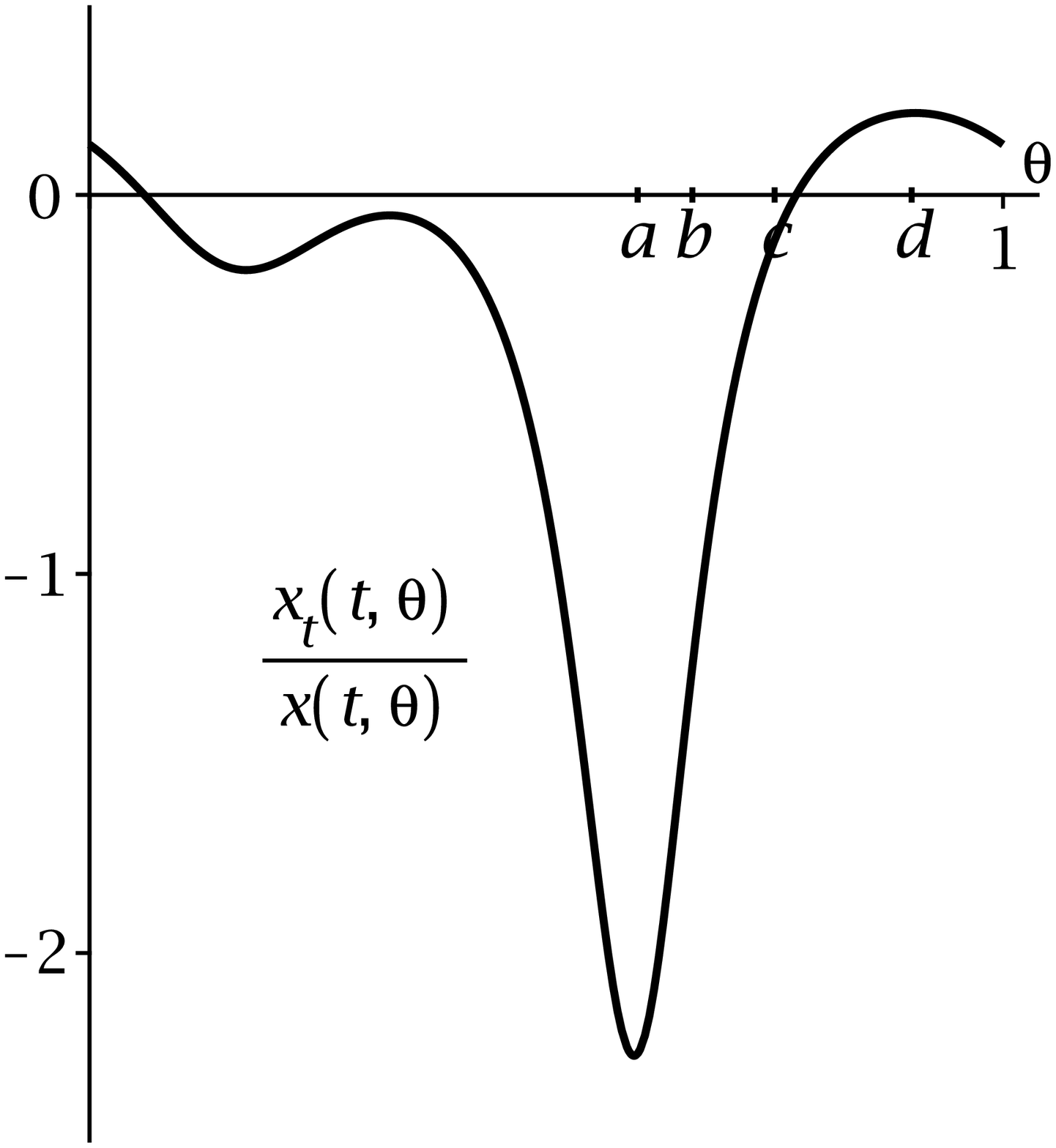}
\caption{The plots of $x$, $y$, and $x_t/x$ in the Hunter-Saxton case ($\lambda=2$ and $\sigma=0$) with $u_0(\theta) = 0.1\sin(2\pi\theta) + 0.04\cos(4\pi \theta)$ at $t=1.4$, shortly before breakdown. Note that $x$ is increasing on $(a,d)$, and $y$ is negative everywhere there, and that $x_t/x$ is most negative
at $\theta=a$. In this case $y_t/y$ is constant, so we have not plotted it.}\label{figure2}
\end{figure}

\begin{lemma}\label{step1lemma}
Suppose $\gamma>0$ and $\sigma>0$, and that $x$ and $y$ satisfy 
the equations in Theorem \ref{transformthm}, 
and thus \eqref{basicconsmom}. If $m_0(\theta)\le 0$ on the interval $[a,d]$, then for any time $t$, the function $x(t,\theta)$ is
increasing in $\theta$ for $\theta\in [a,d]$. As a consequence, we have for any $c\in[a,d]$ and any $t\ge 0$ that
\begin{equation}\label{upperboundonx}
x(t,c)\le (d-c)^{-1/\gamma}.
\end{equation}
\end{lemma}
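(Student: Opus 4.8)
The plan is to prove monotonicity of $x(t,\cdot)$ on $[a,d]$ by controlling the sign of $y$, and then to read off the pointwise bound from the integral normalization \eqref{xpowerintegral}. First I would rewrite the definition \eqref{xydef} of $y$ to isolate the spatial derivative of $x$:
\[
\gamma\, x_\theta(t,\theta) = \sigma\, x(t,\theta)\int_0^t x(\tau,\theta)^\gamma\,d\tau \;-\; y(t,\theta).
\]
Since $\gamma>0$, $\sigma>0$, and $x>0$ throughout the regime of interest, the forced term on the right is nonnegative; hence establishing that $y(t,\theta)\le 0$ on $[a,d]$ immediately forces $x_\theta(t,\theta)\ge 0$ there.

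To pin down the sign of $y$, I would integrate the conservation-of-momentum identity \eqref{basicconsmom} in time from $0$ to $t$. The initial conditions $y(0,\theta)=0$ and $x(0,\theta)=1$ from \eqref{xIC}--\eqref{yIC} kill the boundary term and give
\[
\frac{y(t,\theta)}{x(t,\theta)} = m_0(\theta)\int_0^t \frac{d\tau}{x(\tau,\theta)^2}.
\]
On $[a,d]$ we have $m_0(\theta)\le 0$, while the time integral is nonnegative (its integrand is positive wherever $x>0$); thus $y(t,\theta)/x(t,\theta)\le 0$, and since $x>0$ this yields $y(t,\theta)\le 0$ on $[a,d]$. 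Substituting back into the displayed formula for $x_\theta$ gives $x_\theta(t,\theta)\ge 0$, so $x(t,\cdot)$ is increasing on $[a,d]$, as claimed.

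For the consequence \eqref{upperboundonx}, I would combine this monotonicity with $\int_{S^1}x(t,\theta)^\gamma\,d\theta=1$. Fix $c\in[a,d]$. Because $x(t,\cdot)$ is increasing on $[a,d]$ and $\gamma>0$, we have $x(t,\theta)^\gamma\ge x(t,c)^\gamma$ for every $\theta\in[c,d]$, so discarding the positive part of the integrand outside $[c,d]$ and bounding below on $[c,d]$ gives
\[
1 = \int_{S^1}x(t,\theta)^\gamma\,d\theta \;\ge\; \int_c^d x(t,\theta)^\gamma\,d\theta \;\ge\; (d-c)\,x(t,c)^\gamma,
\]
which rearranges to $x(t,c)\le (d-c)^{-1/\gamma}$.

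The computations are short, so rather than a serious obstacle the only points demanding care are bookkeeping ones. The whole argument is valid only while $x(\cdot,\theta)>0$ on $[a,d]$---exactly the regime before $\eta_\theta=x^\gamma$ first vanishes, where the ratio $y/x$ and the time integral in \eqref{basicconsmom} make sense---so I would invoke the existence interval supplied by Theorem \ref{globalthm}. I would also be careful to verify that both the forced term $\sigma x\int_0^t x^\gamma\,d\tau$ and $-y$ are nonnegative when solving \eqref{xydef} for $x_\theta$, since the monotonicity conclusion rests entirely on those two signs.
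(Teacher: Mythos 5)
Your proposal is correct and follows essentially the same route as the paper: integrate \eqref{basicconsmom} in time using $y(0,\theta)=0$, combine with the definition \eqref{xydef} of $y$ to get the sign of $x_\theta$, and then apply the normalization \eqref{xpowerintegral} on $[c,d]$. The only cosmetic difference is that you first isolate the sign of $y$ and then deduce $x_\theta\ge 0$, whereas the paper substitutes \eqref{xydef} directly into the integrated identity; the algebra is identical.
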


\begin{proof}
Integrate \eqref{basicconsmom} in time to get
\begin{equation}\label{basicconsmominttime}
\frac{y(t,\theta)}{x(t,\theta)} = \frac{y(0,\theta)}{x(0,\theta)} + m_0(\theta) \int_0^t \frac{d\tau}{x(\tau,\theta)^2} = -\lvert m_0(\theta)\rvert \int_0^t \frac{d\tau}{x(\tau,\theta)^2},
\end{equation}
for all $\theta\in [a,d]$, since $y(0,\theta)=0$ everywhere and $m_0$ is nonpositive by assumption.
By the definition \eqref{xydef} of $x$ and $y$, we have
\begin{equation}\label{integratedlemma1}
-\gamma \, \frac{x_{\theta}(t,\theta)}{x(t,\theta)} + \sigma \int_0^t x(\tau,\theta)^{\gamma}\,d\tau =
-\lvert m_0(\theta)\rvert \int_0^t \frac{d\tau}{x(\tau,\theta)^2},
\end{equation}
and since $\sigma>0$ and $\gamma>0$ by assumption, we conclude that $x_{\theta}/x>0$, so that $x$ is strictly increasing as long as it remains positive.

The inequality \eqref{upperboundonx} comes from formula \eqref{xpowerintegral}. 
In particular since $x$ is increasing for $\theta\in [c,d]$, we have
$$ (d-c) x(t,c)^{\gamma} \le \int_c^d x(t,\theta)^{\gamma}\,d\theta \le \int_{S^1} x(t,\theta)^{\gamma}\,d\theta = 1,$$
which implies \eqref{upperboundonx}.
\end{proof}

The next step is to integrate equation \eqref{basicconsmominttime} over $\theta\in [b,c]$, which gives a bound on the logarithm of $x$. This implies
exponential decay in time of $x(t,b)$.

\begin{lemma}\label{step2lemma}
Consider all the same hypotheses as in Lemma \ref{step1lemma} on an interval $[a,d]$. Then for any $b,c$ with $a<b<c<d$,
the function $x$ satisfies
\begin{equation}\label{exponentialdecaybound}
x(t,b) \le x(t,c) e^{-Mt}, \qquad \text{where } M = A\sigma^{\frac{2}{\gamma+2}} \int_b^c \lvert m_0(\theta)\rvert^{\frac{\gamma}{\gamma+2}}\,d\theta,
\end{equation}
and $A$ is a constant depending only on $\gamma$.
\end{lemma}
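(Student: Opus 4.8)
The plan is to extract from the conservation-of-momentum identity a lower bound on $\ln\big(x(t,c)/x(t,b)\big) = \int_b^c \frac{x_\theta(t,\theta)}{x(t,\theta)}\,d\theta$ that grows linearly in $t$; exponentiating then gives exactly \eqref{exponentialdecaybound}. The starting point is equation \eqref{integratedlemma1} from the proof of Lemma \ref{step1lemma}, which I would rewrite in the form
\begin{equation*}
\gamma\,\frac{x_\theta(t,\theta)}{x(t,\theta)} = \int_0^t \Big[\, \sigma\, x(\tau,\theta)^{\gamma} + \abs{m_0(\theta)}\, x(\tau,\theta)^{-2}\,\Big]\,d\tau.
\end{equation*}
The crucial observation is that the integrand is a sum of two positive terms whose relevant product is independent of the unknown $x$, so a pointwise minimization produces a lower bound free of $x$ altogether.

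Concretely, I would first apply a weighted AM--GM (Young) inequality to the map $z\mapsto \sigma z^{\gamma} + \abs{m_0(\theta)}\, z^{-2}$ for $z>0$. Minimizing over $z$ (the critical point satisfies $z_*^{\gamma+2} = 2\abs{m_0(\theta)}/(\gamma\sigma)$) gives
\begin{equation*}
\sigma z^{\gamma} + \abs{m_0(\theta)}\, z^{-2} \;\ge\; c(\gamma)\, \sigma^{\frac{2}{\gamma+2}}\, \abs{m_0(\theta)}^{\frac{\gamma}{\gamma+2}}, \qquad c(\gamma) = \Big(\tfrac{2}{\gamma}\Big)^{\frac{\gamma}{\gamma+2}} + \Big(\tfrac{\gamma}{2}\Big)^{\frac{2}{\gamma+2}},
\end{equation*}
valid for every $z>0$, hence in particular for $z = x(\tau,\theta)$. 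The homogeneity of the two competing powers is precisely what yields the exponents $\frac{2}{\gamma+2}$ on $\sigma$ and $\frac{\gamma}{\gamma+2}$ on $\abs{m_0}$ that appear in the statement. Integrating this pointwise bound in $\tau$ over $[0,t]$ gives
\begin{equation*}
\gamma\,\frac{x_\theta(t,\theta)}{x(t,\theta)} \;\ge\; c(\gamma)\, \sigma^{\frac{2}{\gamma+2}}\, \abs{m_0(\theta)}^{\frac{\gamma}{\gamma+2}}\, t,
\end{equation*}
and integrating once more in $\theta$ over $[b,c]$, using $x_\theta/x = \partial_\theta \ln x$, yields
\begin{equation*}
\ln\frac{x(t,c)}{x(t,b)} \;\ge\; \frac{c(\gamma)}{\gamma}\, \sigma^{\frac{2}{\gamma+2}} \Big(\int_b^c \abs{m_0(\theta)}^{\frac{\gamma}{\gamma+2}}\,d\theta\Big)\, t = M t,
\end{equation*}
which is \eqref{exponentialdecaybound} after exponentiating, with $A = c(\gamma)/\gamma$ depending only on $\gamma$.

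The main obstacle here is really the decoupling step rather than a genuine difficulty: the integrand depends on the solution $x(\tau,\theta)$ in an unknown way, and a priori each of $\int_0^t x^{\gamma}\,d\tau$ and $\int_0^t x^{-2}\,d\tau$ could be small on its own. The pointwise Young inequality converts this coupled, solution-dependent expression into a clean $t$-linear lower bound, and it works precisely because the two powers $z^{\gamma}$ and $z^{-2}$ cannot both be small simultaneously when $\sigma>0$ and $m_0$ is bounded away from zero on the relevant interval. Everything else is routine integration and bookkeeping of the $\gamma$-dependent constant; I would only need to confirm that $x(\tau,\theta)>0$ on the whole interval (guaranteed by Lemma \ref{step1lemma} up to the breakdown time) so that all the powers are well-defined.
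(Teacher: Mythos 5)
Your proposal is correct and follows essentially the same route as the paper: rewrite \eqref{integratedlemma1} as $\gamma x_\theta/x = \int_0^t \bigl(\sigma x^\gamma + \abs{m_0}x^{-2}\bigr)\,d\tau$, minimize the integrand pointwise in $x$ to get a $t$-linear lower bound on $\partial_\theta \ln x$, integrate over $[b,c]$, and exponentiate. Your constant $c(\gamma)/\gamma$ agrees with the paper's $A=\bigl(\tfrac{2}{\gamma}\bigr)^{\gamma/(\gamma+2)}\bigl(\tfrac{1}{\gamma}+\tfrac12\bigr)$.
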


\begin{proof}
We begin with \eqref{integratedlemma1}, in the form
\begin{equation}\label{logxderiv}
\frac{x_{\theta}(t,\theta)}{x(t,\theta)} = \int_0^t \frac{1}{\gamma}\left( \sigma x(\tau,\theta)^{\gamma} + \frac{\lvert m_0(\theta)\rvert}{x(\tau,\theta)^2}\right)\,d\tau.
\end{equation}
Elementary calculus shows that the function
$$ x\mapsto \frac{1}{\gamma}\left( \sigma x^{\gamma} + \frac{\lvert m_0\rvert}{x^2}\right)$$
is minimized among positive $x$ for $x = \left( \frac{2\lvert m_0\rvert}{\sigma \gamma}\right)^{\frac{1}{\gamma+2}}$, and the minimum value is
$$ A \lvert m_0\rvert^{\frac{\gamma}{\gamma+2}} \sigma^{\frac{2}{\gamma+2}}, \quad\text{for}\quad A = \left(\frac{2}{\gamma}\right)^{\frac{\gamma}{\gamma+2}} \left( \frac{1}{\gamma}+\frac{1}{2}\right).$$
In particular since this bound is independent of time, equation \eqref{logxderiv} implies
$$ \frac{\partial}{\partial \theta} \ln{x(t,\theta)} \ge A t  \sigma^{\frac{2}{\gamma+2}} \lvert m_0(\theta)\rvert^{\frac{\gamma}{\gamma+2}}.$$
Integrating from $\theta=b$ to $\theta=c$ gives
$$ \ln{x(t,c)} - \ln{x(t,b)} \ge Mt,$$
and exponentiation gives \eqref{exponentialdecaybound}.
\end{proof}

The last step is to use the conservation of angular momentum formula \eqref{angularmomentum}
$$ xy_t - y x_t = m_0$$
directly. Dividing through by $xy$ gives
\begin{equation}\label{xyratios}
\frac{x_t}{x} = \frac{y_t}{y} - \frac{m_0}{xy}.
\end{equation}
Now by Lemma \ref{upperboundlem}, since both $x$ and $y$ satisfy the same ODE with a bounded forcing function, the quantity
$y_t/y$ is bounded above by the square root of any increasing upper bound for
the forcing function. Meanwhile since $y$ is negative if and only if $m_0$ is, the other
term can be made as large and negative as we want when $x$ and $y$ are both small.

\begin{lemma}\label{step3lemma}
Consider the same hypotheses as in Lemma \ref{step1lemma} and \ref{step2lemma}. Then
\begin{equation}\label{intervalabbound}
\int_a^b \frac{x_t(t,\theta)}{x(t,\theta)}\,d\theta \le \int_a^b \frac{y_t(t,\theta)}{y(t,\theta)}\,d\theta - \frac{N}{x(t,b)^2},
\qquad \text{where } N = \frac{2}{\gamma} \left(\int_a^b \sqrt{\lvert m_0(\theta)\rvert}\,d\theta\right)^2.
\end{equation}
\end{lemma}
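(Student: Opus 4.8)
The plan is to start from the conservation of angular momentum \eqref{angularmomentum}, written in the form $xy_t - yx_t = m_0$, and divide through by $xy$ to obtain the pointwise identity \eqref{xyratios}, namely $\frac{x_t}{x} = \frac{y_t}{y} - \frac{m_0}{xy}$. Integrating this over $\theta\in[a,b]$ immediately reduces the claimed inequality \eqref{intervalabbound} to the single lower bound
$$\int_a^b \frac{m_0(\theta)}{x(t,\theta)\,y(t,\theta)}\,d\theta \ge \frac{N}{x(t,b)^2}.$$
On $[a,d]$ we have $m_0\le 0$, and by \eqref{basicconsmominttime} this forces $y\le 0$ as well, so the integrand $\frac{m_0}{xy} = \frac{\lvert m_0\rvert}{x\lvert y\rvert}$ is nonnegative; this is exactly the quantity I want to bound from below.

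Next I would apply the Cauchy--Schwarz inequality using the factorization $\sqrt{\lvert m_0\rvert} = \sqrt{\lvert m_0\rvert/(x\lvert y\rvert)}\cdot\sqrt{x\lvert y\rvert}$, which gives
$$\left(\int_a^b \sqrt{\lvert m_0(\theta)\rvert}\,d\theta\right)^2 \le \left(\int_a^b \frac{\lvert m_0\rvert}{x\lvert y\rvert}\,d\theta\right)\left(\int_a^b x\lvert y\rvert\,d\theta\right).$$
Thus the desired lower bound follows at once from the key estimate
$$\int_a^b x(t,\theta)\,\lvert y(t,\theta)\rvert\,d\theta \le \frac{\gamma}{2}\,x(t,b)^2,$$
since combining these two displays yields $\int_a^b \frac{\lvert m_0\rvert}{x\lvert y\rvert}\,d\theta \ge \frac{2}{\gamma}\big(\int_a^b\sqrt{\lvert m_0\rvert}\,d\theta\big)^2/x(t,b)^2 = N/x(t,b)^2$, which is precisely what \eqref{intervalabbound} demands.

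To establish the key estimate I use the definition \eqref{xydef} of $y$: since $y\le 0$ on $[a,d]$ we have $\lvert y\rvert = \gamma x_\theta - \sigma x\int_0^t x^\gamma\,d\tau$, and $x$ is increasing on $[a,d]$ by Lemma \ref{step1lemma}, so $x_\theta\ge 0$. Hence $x\lvert y\rvert = \gamma x x_\theta - \sigma x^2\int_0^t x^\gamma\,d\tau$; integrating over $[a,b]$, the first term gives $\frac{\gamma}{2}\big(x(t,b)^2 - x(t,a)^2\big)$ by the fundamental theorem of calculus, while the second is nonpositive because $\sigma>0$ and $x>0$. Discarding the two nonpositive contributions $-\frac{\gamma}{2}x(t,a)^2$ and $-\sigma\int_a^b x^2\int_0^t x^\gamma$ yields the bound.

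The point requiring the most care, rather than a genuine obstacle, is twofold. First, I must check that $\int_a^b \frac{x_t}{x}\,d\theta$ and $\int_a^b \frac{y_t}{y}\,d\theta$ are actually convergent even though $y(t,a)$ may vanish at the sign-change point $\theta=a$; this is resolved by noting that $\frac{m_0}{xy}$ is in fact bounded, since \eqref{basicconsmominttime} gives $\lvert y\rvert = x\lvert m_0\rvert\int_0^t x^{-2}\,d\tau$ and therefore $\frac{m_0}{xy} = \big(x^2\int_0^t x^{-2}\,d\tau\big)^{-1}$, independent of $m_0$, so $\frac{y_t}{y} = \frac{x_t}{x} + \frac{m_0}{xy}$ remains bounded near $\theta=a$. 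Second, I must keep the signs straight in the integration-by-parts step so that both discarded terms are genuinely $\le 0$; this rests entirely on $\sigma>0$, $x>0$, and the monotonicity of $x$ supplied by Lemma \ref{step1lemma}. Everything else is elementary.
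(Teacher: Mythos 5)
Your proof is correct and follows essentially the same route as the paper: divide the conserved angular momentum identity by $xy$, integrate over $[a,b]$, apply Cauchy--Schwarz with the factorization $\sqrt{\lvert m_0\rvert}=\sqrt{\lvert m_0\rvert/(x\lvert y\rvert)}\cdot\sqrt{x\lvert y\rvert}$, and bound $\int_a^b x\lvert y\rvert\,d\theta$ by $\tfrac{\gamma}{2}x(t,b)^2$ using the definition of $y$ and the monotonicity of $x$ from Lemma \ref{step1lemma}. Your additional observation that $m_0/(xy)=\bigl(x^2\int_0^t x^{-2}\,d\tau\bigr)^{-1}$ stays bounded where $y$ vanishes is a correct point of extra rigor not spelled out in the paper, but it does not change the argument.
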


\begin{proof}
Integrating equation \eqref{xyratios} for $\theta\in [a,b]$, we obtain 
$$ \int_a^b \frac{x_t(t,\theta)}{x(t,\theta)} = \int_a^b \frac{y_t(t,\theta)}{y(t,\theta)} - J,$$
where $J$ is the positive quantity
\begin{equation}\label{Jdef}
J := \int_a^b \frac{m_0(\theta)\,d\theta}{x(t,\theta)y(t,\theta)}.
\end{equation}
We want to establish a lower bound for $J$. 

Since $m_0$ and $y$ are both negative simultaneously on $(a,b)$, the Cauchy-Schwarz inequality implies that
\begin{equation}\label{cauchyschwarz}
\left(\int_a^b \sqrt{\lvert m_0(\theta)\rvert}\,d\theta \right)^2 \le \int_a^b \frac{\lvert m_0(\theta)\rvert \,d\theta}{x(t,\theta) \lvert y(t,\theta)\rvert} \int_a^b x(t,\theta) \lvert y(t,\theta)\rvert \, d\theta.
\end{equation}
Now by formula \eqref{xydef}, and using the fact that $\lvert y\rvert = -y$ on $[a,d]$, we get
\begin{align*}
\int_a^b x(t,\theta) \lvert y(t,\theta)\rvert \, d\theta &= \gamma \int_a^b x(t,\theta) x_{\theta}(t,\theta) \,d\theta - \sigma x(t,\theta)^2 \int_0^t x(\tau,\theta)^{\gamma} \,d\tau\,d\theta \\
&\le \tfrac{\gamma}{2} \big( x(t,b)^2 - x(t,a)^2\big) \le \tfrac{\gamma}{2} x(t,b)^2.
\end{align*}
Now plug this inequality into \eqref{cauchyschwarz} to get that $J$ given by \eqref{Jdef} satisfies
$$ J \ge \frac{2}{\gamma x(t,b)^2} \left( \int_a^b \sqrt{\lvert m_0(\theta)\rvert} \,d\theta\right)^2.$$
This then yields \eqref{intervalabbound}.
\end{proof}

Combining Lemmas \ref{step1lemma}--\ref{step3lemma}, we can now prove the second half of Theorem \ref{mainthm}.
Everything here would in fact work for any value of $\lambda>1$, not just $\lambda=2$ or $\lambda=3$, except for the fact that we need
a subexponential upper bound for the forcing function in order to use Lemma \ref{upperboundlem}.

\begin{theorem}\label{breakdownthm}[Theorem \ref{mainthm}, ``only if'' case]
Suppose $\sigma>0$ and that $\lambda=2$ or $\lambda=3$.
If the sign of $m_0 = \sigma - u_0''$ changes on the circle, then $C^2$ solutions of
\eqref{mubgeneral}--\eqref{mubICs} must break down in finite time, as the Lagrangian flow
given by \eqref{flowequation} ceases to be a diffeomorphism.
\end{theorem}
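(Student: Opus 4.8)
The plan is to argue by contradiction. Suppose the flow $\eta$ remains a diffeomorphism for all time, so that by Theorem \ref{globalthm} the function $x(t,\theta)=\eta_\theta(t,\theta)^{1/\gamma}$ stays strictly positive for all $t\ge 0$ and every estimate of Lemmas \ref{step1lemma}--\ref{step3lemma} is in force. Fix $\theta_*\in S^1$ where $m_0$ changes from positive to negative and pick $a<b<c<d$ with $a=\theta_*$ and $m_0<0$ on $(a,d)$ (so $m_0\le 0$ on $[a,d]$, as Lemma \ref{step1lemma} requires). Because \eqref{flowderiv} and \eqref{xydef} give $u_\theta\big(t,\eta(t,\theta)\big)=\gamma\,x_t(t,\theta)/x(t,\theta)$, it suffices to show that $x_t/x$ becomes arbitrarily negative somewhere on $(a,b)$: Lemma \ref{breakdownlemma} will then force $x$, and hence $\eta_\theta=x^\gamma$, to vanish in finite time, contradicting global existence of the diffeomorphism and showing $u_\theta\to-\infty$.

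First I would chain the three lemmas to drive the spatial integral of $x_t/x$ to $-\infty$. Lemma \ref{step1lemma} bounds $x(t,c)$ by the time-independent constant $(d-c)^{-1/\gamma}$, and Lemma \ref{step2lemma} upgrades this to the exponential decay $x(t,b)\le x(t,c)e^{-Mt}$ with $M>0$, so $N/x(t,b)^2\ge N(d-c)^{2/\gamma}e^{2Mt}$ grows exponentially. To control the first term of \eqref{intervalabbound} I would apply Lemma \ref{upperboundlem} to $\phi=y(\cdot,\theta)$: since $y$ solves \eqref{yeq}, the same equation as $x$ with the same forcing $F$, and since $y<0$ on $(a,d)$ for $t>0$ (being negative exactly when $m_0$ is), the ratio $y_t/y$ satisfies the Riccati inequality and obeys $y_t/y\le C+f(t)$, where $f(t)^2$ is the polynomial force bound of Lemma \ref{forcegrowth} ($f\equiv K$ if $\lambda=2$, $f(t)=\sqrt{K^2+Ct}$ if $\lambda=3$). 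Integrating over $(a,b)$ and inserting into \eqref{intervalabbound} yields
\[ \int_a^b \frac{x_t(t,\theta)}{x(t,\theta)}\,d\theta \le (b-a)\big(C+f(t)\big) - N(d-c)^{2/\gamma}e^{2Mt}, \]
whose right side tends to $-\infty$ because the exponential decay of $x(t,b)$ overwhelms the at-most-square-root growth of the force term. This is exactly where the polynomial force bound is indispensable.

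The final and most delicate step is to pass from this average blow-up to an honest breakdown, and this is where I expect the main obstacle. Averaging, at each large $t$ there is a point $\theta_t\in(a,b)$ with $x_t(t,\theta_t)/x(t,\theta_t)$ bounded above by a negative multiple of $e^{2Mt}$. I would then apply Lemma \ref{breakdownlemma} to the single-particle equation $\phi''=F(\cdot,\theta_t)\phi$ started at time $t$, whose breakdown threshold is $-1/\int_t^{T} d\tau/g(\tau)^2$ for the comparison solution $g$ with $g(t)=1$, $g'(t)=0$. The obstacle is that this threshold can itself deteriorate as $t$ grows, so I must verify it deteriorates more slowly than $x_t/x$ diverges. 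Using $F\le f^2$ and a Sturm/$\cosh$ comparison (globally when $\lambda=2$, on unit time-intervals where $f$ is essentially constant when $\lambda=3$) gives $g(\tau)\le\cosh\big(f(t)(\tau-t)\big)$ and hence $\int_t^{\infty} d\tau/g(\tau)^2\gtrsim 1/f(t)$, so the threshold is bounded below by a constant multiple of $-f(t)$, i.e.\ $-K$ for $\lambda=2$ and $\sim-\sqrt{t}$ for $\lambda=3$. Since $x_t(t,\theta_t)/x(t,\theta_t)$ diverges exponentially, it eventually lies below this threshold, and Lemma \ref{breakdownlemma} produces a finite $t_*>t$ with $x(t_*,\theta_t)=0$. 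As $x$ is increasing on $[a,d]$, the first zero in fact occurs at the left endpoint $a=\theta_*$, where $u_\theta(t,\eta)=\gamma x_t/x\to-\infty$; this contradicts global existence and completes the proof at a sign change of $m_0$ from positive to negative.
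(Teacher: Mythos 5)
Your proof follows essentially the same route as the paper: the same chain of Lemmas \ref{step1lemma}--\ref{step3lemma} driving $\int_a^b x_t/x\,d\theta$ below $P(t)-N(d-c)^{2/\gamma}e^{2Mt}$, the same application of Lemma \ref{upperboundlem} to $y$ using the polynomial force bound of Lemma \ref{forcegrowth}, and the same averaging step followed by Lemma \ref{breakdownlemma} and the monotonicity of $x$ on $[a,d]$ to locate the first zero at $\theta=a$. Your extra quantitative check that the ``sufficiently negative'' threshold in Lemma \ref{breakdownlemma} degrades only like $-f(t)$ (via the $\cosh$ comparison), and hence is eventually overwhelmed by the exponential decay of $x_t/x$, is a point the paper leaves implicit and is a correct and worthwhile supplement.
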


\begin{proof}

Choose any subdivision $a<b<c<d$ such that $m_0$ is negative on $(a,d)$, and such that $m_0(a)=0$. Lemma \ref{step1lemma} implies that
$$ x(t,c) \le (d-c)^{-1/\gamma}.$$
Lemma \ref{step2lemma} then implies that
$$ x(t,b) \le x(t,c) e^{-Mt} \le (d-c)^{-1/\gamma} e^{-Mt},$$
where $M>0$ is given by equation \eqref{exponentialdecaybound}. Applying Lemma \ref{step3lemma} then gives
$$ \int_a^b \frac{x_t(t,\theta)}{x(t,\theta)}\,d\theta \le \int_a^b \frac{y_t(t,\theta)}{y(t,\theta)}\,d\theta -
N (d-c)^{2/\gamma} e^{2Mt},$$
where $N>0$ is given by \eqref{intervalabbound}.

Since $y$ satisfies the equation $y_{tt}(t,\theta)= F(t,\theta) y(t,\theta)$ by Theorem \ref{transformthm},
the quantity $y_t/y$ is bounded above by an estimate of the form
\begin{equation}\label{ytratio}
\frac{y_t(t,\theta)}{y(t,\theta)} \le C(\theta) + f(t,\theta),
\end{equation}
where $f(t,\theta)$ is any positive increasing function satisfying $F(t,\theta)\le f(t,\theta)^2$ for all $t$ and $\theta$, as in
Lemma \ref{upperboundlem}. If $\lambda=2$ or $\lambda=3$, we can use Proposition \ref{forcegrowth} to see that
$f(t,\theta)$ grows at most polynomially in time, for each value of $\theta$, and this implies by
Lemma \ref{upperboundlem} that $y_t(t,\theta)/y(t,\theta)$ grows at most polynomially in time.
Integrating over the interval $\theta\in [a,b]$ still gives polynomial growth in time, and this implies that our estimate takes the form
$$ \int_a^b \frac{x_t(t,\theta)}{x(t,\theta)} \,d\theta \le P(t) - N (d-c)^{2/\gamma} e^{2Mt},
$$
where $P(t)$ is a function growing at most like a power of $t$. Since the exponential term eventually dominates, we see that
we can make the integral
$$ \int_a^b \frac{x_t(t,\theta)}{x(t,\theta)} \,d\theta$$
as small as we want, which also implies that for some $\theta\in [a,b]$, the quantity $x_t(t,\theta)/x(t,\theta)$ can be made
as small as desired. For such $\theta$, Lemma \ref{breakdownlemma} implies that $x(t,\theta)$ must reach zero in finite time.
Of course, since $x(t,\cdot)$ is increasing on $[a,d]$, the smallest value must occur at $\theta=a$, when the sign of $m_0$
changes from positive to negative.
\end{proof}

\section{Outlook}\label{sectionoutlook}

The general principle that $m_0>0$ or $m_0<0$ everywhere implies global existence of classical solutions for solutions
of \eqref{mubgeneral} is established in T\i{\u g}lay-Vizman~\cite{tiglayvizman} as long as the definition of $m$ in terms of $u$
that replaces \eqref{momentumdef} involves at least two derivatives of $u$. In many situations of interest, the operator
$m$ has mean zero for all $u$, and so it is impossible for $m_0$ to have a constant sign; thus we would expect all classical
solutions to break down in finite time. As an example we return to the Okamoto-Sakajo-Wunsch equation~\cite{OSW}, given by \eqref{mubgeneral} where $m = Hu_{\theta}$, for which $m$ integrates to zero, and it is impossible to have $m_0$ positive or negative everywhere. (On the real
line the situation is different, but our periodic context forecloses such possibilities.)

The following construction was presented in \cite{BKP} in the case $\lambda=2$, but most things work the same way for any value of $\lambda$. Breakdown
for all solutions in the case $\lambda=2$ was given in \cite{washabaughpreston}, while breakdown for all positive $\lambda$ with $u_0$ odd was given
by Castro-Cord\'oba~\cite{CC}.  For $\lambda>0$, all solutions break down in finite time, while for $\lambda<0$ the solution is much more complicated and unknown in general (particularly in the most important case $\lambda=-1$, the De Gregorio equation). For the state of the art on global existence and breakdown for such equations, see Chen~\cite{chen} for the periodic case, Elgindi-Jeong~\cite{elgindijeong} for the nonperiodic case, and references in both.

\begin{proposition}\label{outlooktheorem}
Suppose $u$ and $m$ satisfy \eqref{mubgeneral} with momentum defined by $m = Hu_{\theta}$, i.e., the modified Constantin-Lax-Majda equation. 
Define the transformation
\begin{equation}\label{mclmtransformation}
x = \eta_{\theta}^{\lambda/2} \cos{\psi}, \qquad y = \eta_{\theta}^{\lambda/2} \sin{\psi},
\end{equation}
where $\psi$ is defined by 
\begin{equation}\label{psidef}
\psi(t,\theta) = \frac{\lambda m_0(\theta)}{2} \int_0^t \frac{d\tau}{\eta_{\theta}(\tau,\theta)^{\lambda}}.
\end{equation}
Then $(x,y)$ satisfy a solar model of the form 
$$ x_{tt}(t,\theta) = - \frac{\lambda}{2} F\big(t,\eta(t,\theta)\big) x(t,\theta), \qquad  y_{tt}(t,\theta) = - \frac{\lambda}{2} F\big(t,\eta(t,\theta)\big) y(t,\theta),$$
where $F(t,\theta)$ is always positive. 
\end{proposition}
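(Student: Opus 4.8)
The plan is to collapse the pair \eqref{mclmtransformation} into the single complex variable $z = x + iy = \eta_\theta^{\lambda/2} e^{i\psi}$ and show that $z_{tt}/z$ is a real negative multiple of a positive function; splitting into real and imaginary parts then yields both solar equations at once. First I would exploit the vorticity transport formula \eqref{vorticitytransport}. Differentiating \eqref{psidef} in time and using $\eta_\theta^{\lambda} m(t,\eta)=m_0$ gives $\psi_t = \tfrac{\lambda}{2} m_0/\eta_\theta^{\lambda} = \tfrac{\lambda}{2} m\bigl(t,\eta(t,\theta)\bigr)$, so the angular velocity of the particle is exactly the transported vorticity up to the factor $\lambda/2$. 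Combined with \eqref{flowderiv}, which gives $\partial_t \log\eta_\theta = u_\theta(t,\eta)$, this produces the clean identity
\[ \frac{z_t}{z} = \frac{\lambda}{2}\,\partial_t\log\eta_\theta + i\psi_t = \frac{\lambda}{2}\bigl(u_\theta + iHu_\theta\bigr)\bigl(t,\eta(t,\theta)\bigr). \]
Writing $W := (u_\theta + iHu_\theta)(t,\eta)$ for this complex velocity gradient, one more time derivative yields $z_{tt}/z = \tfrac{\lambda}{2}\dot W + \tfrac{\lambda^2}{4}W^2$, where the dot denotes the material derivative $D_t = \partial_t + u\partial_\theta$ along $\eta$.

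The conceptual heart of the argument is that the imaginary part of $z_{tt}/z$ vanishes. To see this I would insert the equation: since $m = Hu_\theta$, equation \eqref{mubgeneral} reads $D_t(Hu_\theta) = -\lambda u_\theta\, Hu_\theta$ along the flow, so $\dot W = D_t u_\theta - i\lambda u_\theta Hu_\theta$, while $W^2 = \bigl(u_\theta^2 - (Hu_\theta)^2\bigr) + 2i\,u_\theta Hu_\theta$. The two imaginary contributions cancel exactly, leaving $z_{tt}/z = \tfrac{\lambda}{2}D_t u_\theta + \tfrac{\lambda^2}{4}\bigl(u_\theta^2 - (Hu_\theta)^2\bigr)$, which is real. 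This reality is precisely the statement that the angular momentum $x y_t - y x_t = \tfrac{\lambda}{2} m_0$ is conserved, and taking real and imaginary parts of $z_{tt} = -\tfrac{\lambda}{2}F z$ produces the asserted pair of equations with
\[ F = -\,D_t u_\theta - \tfrac{\lambda}{2}\bigl(u_\theta^2 - (Hu_\theta)^2\bigr)\bigl(t,\eta(t,\theta)\bigr). \]

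Next I would turn $F$ into a quantity depending only on the instantaneous field. Solving the equation for $u_{t\theta}$ gives $u_{t\theta} = H\bigl(u Hu_{\theta\theta} + \lambda u_\theta Hu_\theta\bigr)$, and the Tricomi-type identity $u_\theta Hu_\theta = \tfrac12 H\bigl(u_\theta^2 - (Hu_\theta)^2\bigr)$ (valid because $u_\theta^2-(Hu_\theta)^2$ is mean-zero, $H$ being an $L^2$ isometry on mean-zero functions) removes the $\lambda$-dependence entirely. After substituting $D_t u_\theta = u_{t\theta} + u u_{\theta\theta}$, everything collapses to
\[ F = -\,u u_{\theta\theta} - H\bigl(u\,Hu_{\theta\theta}\bigr), \]
evaluated at $\eta(t,\theta)$: a purely spatial operator applied to $u(t,\cdot)$, manifestly independent of $\lambda$, which is a reassuring consistency check given that $\lambda$ appears only as the prefactor $-\tfrac{\lambda}{2}$ in the solar equations.

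The last and hardest step is positivity of $F$. Expanding $u = \sum_k \hat u_k e^{2\pi i k\theta}$ and computing the Fourier coefficients of $-u u_{\theta\theta} - H(u Hu_{\theta\theta})$ with the convention $H e^{2\pi i n\theta} = -i\,\sgn(n)\,e^{2\pi i n\theta}$, I expect the position-dependent quadratic form
\[ F(\theta) = 8\pi^2\sum_{k,n\ge 1}\min(k,n)^2\,\hat u_k\,\overline{\hat u_n}\,e^{2\pi i(k-n)\theta}. \]
Positivity then follows from the fact that $\min(k,n)^2$ is a positive-definite kernel on the positive integers: using $\min(k,n)^2 = \sum_{L\ge 1}(2L-1)\,\mathbf{1}[L\le k]\,\mathbf{1}[L\le n]$, the form becomes a manifest sum of squares
\[ F(\theta) = 8\pi^2\sum_{L\ge 1}(2L-1)\,\Bigl|\sum_{k\ge L}\hat u_k\, e^{2\pi i k\theta}\Bigr|^2 \ge 0, \]
with strict inequality unless $u(t,\cdot)$ is constant. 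The main obstacle is exactly this final step. The bookkeeping needed to reach the $\min(k,n)^2$ kernel through the Hilbert-transform identities must be done carefully, tracking the mean terms and the asymmetric weighting of frequencies; and the positive-definiteness of $\min(k,n)^2$, while elementary once the representation above is spotted, is what actually delivers the pointwise statement $F>0$ and is invisible from any single real-space decomposition of $F$ (for instance, $F = \tfrac12\bigl(u_\theta^2+(Hu_\theta)^2\bigr) - \tfrac12\partial_\theta\bigl[(u^2)_\theta + 2H(uHu_\theta)\bigr]$ exhibits a positive part but a sign-indefinite divergence that only integrates to the positive energy $\int u_\theta^2\,d\theta$).
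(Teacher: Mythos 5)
Your proof is correct, and it reaches the same central identity as the paper --- namely that applying the Hilbert transform to the momentum equation and using the Tricomi/Cotlar product identity yields $u_{t\theta}+uu_{\theta\theta}-\tfrac{\lambda}{2}\big((Hu_\theta)^2-u_\theta^2\big)=-F$ with $F=-uu_{\theta\theta}-H(uHu_{\theta\theta})$ --- but it packages the two halves of the argument differently. For the transformation, the paper first derives the Ermakov--Pinney equation $\rho_{tt}=\tfrac{\lambda^2}{4}m_0^2\rho^{-3}-\tfrac{\lambda}{2}F\rho$ for $\rho=\eta_\theta^{\lambda/2}$ and then invokes the standard phase--amplitude linearization, solving $\rho\psi_{tt}+2\rho_t\psi_t=0$ to recover \eqref{psidef}; you instead verify the given $\psi$ directly by computing $z_t/z=\tfrac{\lambda}{2}(u_\theta+iHu_\theta)\circ\eta$ for $z=x+iy$ and observing that the imaginary part of $z_{tt}/z$ cancels by the equation itself. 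These are two presentations of the same mechanism (the cancellation of $\operatorname{Im}(z_{tt}/z)$ is exactly conservation of the angular momentum $\rho^2\psi_t=\tfrac{\lambda}{2}m_0$), though yours is arguably cleaner in that it produces both solar equations in one stroke without passing through the nonlinear $\rho^{-3}$ term. The more substantive difference is the positivity of $F$: the paper simply cites \cite{BKP} for this, whereas you supply a complete proof via the Fourier representation $F(\theta)=8\pi^2\sum_{k,n\ge 1}\min(k,n)^2\hat u_k\overline{\hat u_n}e^{2\pi i(k-n)\theta}$ and the sum-of-squares decomposition of the kernel $\min(k,n)^2=\sum_{L\ge 1}(2L-1)\mathbf{1}[L\le k]\mathbf{1}[L\le n]$; I checked the coefficient bookkeeping (the diagonal reproduces $\int_{S^1}u_\theta^2\,d\theta$ and the off-diagonal terms carry the factor $2$ from the sign condition $\sgn(k+n)\ne\sgn(n)$) and it is right, as is the observation that vanishing of all tail sums at a point forces $u$ to be constant, giving strict positivity. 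So your write-up is self-contained where the paper is not; the only cost is that you re-derive material that the paper deliberately outsources to \cite{BKP}, where this kernel computation originally appears.
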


\begin{proof}
As in \cite{BKP}, we start with
\begin{equation}\label{mCLMeq}
m_t + u m_{\theta} + \lambda u_{\theta} m = 0, \qquad m = Hu_{\theta},
\end{equation}
and applying the Hilbert transform gives
$$ u_{t\theta} + u u_{\theta\theta} - \frac{\lambda}{2} (m^2 -u_{\theta}^2) = -F, \qquad F = -uu_{\theta\theta} - H(uHu_{\theta\theta}),$$
using the product identity. For any $u$, the function $F$ is positive at every point, as shown in \cite{BKP}.
In Lagrangian form using \eqref{flowequation}, \eqref{vorticitytransport}, and \eqref{flowderiv}, this becomes
$$ \frac{\partial}{\partial t}\left( \frac{\eta_{t\theta}}{\eta_{\theta}}\right) + \frac{\lambda}{2} \left( \frac{\eta_{t\theta}}{\eta_{\theta}}\right)^2  =
\frac{\lambda}{2} \frac{m_0^2}{\eta_{\theta}^{2\lambda}} - F(t,\eta).$$
The transformation $\rho = \eta_{\theta}^{\lambda/2}$ turns this into the Ermakov-Pinney-type equation
\begin{equation}\label{OSWradial}
\rho_{tt} = \frac{\lambda^2}{4} \, \frac{m_0^2}{\rho^3} - \frac{\lambda}{2} \, F \rho.
\end{equation}

The usual theory of the Ermakov-Pinney equation shows how to linearize \eqref{OSWradial}: we define functions 
$x = \rho \cos{\psi}$ and $y=\rho \sin{\psi}$ for some function $\psi$, and we easily compute that 
$$ x_{tt} = - \frac{\lambda}{2} \, F x \qquad \text{and}\qquad y_{tt} = - \frac{\lambda}{2} \, F y$$ 
is satisfied if and only if $\psi$ satisfies 
$$ \rho \psi_{tt} + 2 \rho_t \psi_t = 0.$$
Integrating this in time gives equation \eqref{psidef}. 
\end{proof}


This formulation makes it obvious that if $\lambda>0$, the force is attracting, and zero angular momentum with
$y(0,\theta)=0$ and $x_t(0,\theta)<0$ implies $\rho(t,\theta)$ reaches zero in finite time. Hence $\eta_{\theta}$ does as well.
(There is always such a $\theta\in S^1$ by the Hopf Lemma; see \cite{washabaughpreston}.)

If $\lambda<0$, the effective force in the solar model becomes repulsive. The singular condition for $\lambda<0$ is no longer that 
$\eta_{\theta}\to 0$, but rather that $\eta_{\theta}\to \infty$. This again translates into $\rho\to 0$. (This corresponds to
$u_{\theta}$ approaching positive infinity rather than negative infinity.) It is still possible that the
particle can approach the origin, but it needs to have both zero angular momentum and a sufficiently negative
velocity pointing toward the origin to counteract the repulsive force.

We give a simple example of a bound that is straightforward in the solar model.

\begin{corollary}\label{degregoriocorollary}
Suppose $\lambda=-1$ and $u$ and $m$ satisfy \eqref{mCLMeq}. If $\theta\in S^1$ is such that $m_0(\theta)\ne 0$, then 
\begin{equation}\label{etathetamax}
\eta_{\theta}(t,\theta) \le 1  + \frac{u_0'(\theta)^2}{m_0(\theta)^2}
\end{equation}
for every $t\ge 0$ as long as the solution exists. 
\end{corollary}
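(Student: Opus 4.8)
The plan is to specialize the solar model of Proposition~\ref{outlooktheorem} to $\lambda=-1$, where the radial variable is $\rho=\eta_\theta^{-1/2}$, and to observe that the whole statement is just a lower bound on that radius. Since $x=\rho\cos\psi$ and $y=\rho\sin\psi$ by \eqref{mclmtransformation}, we have $r:=\sqrt{x^2+y^2}=\rho=\eta_\theta^{-1/2}$, so $\eta_\theta=1/r^2$ and the desired inequality \eqref{etathetamax} is equivalent to the lower bound $r(t)^2\ge m_0(\theta)^2/\big(m_0(\theta)^2+u_0'(\theta)^2\big)$. First I would record the two quantities this bound involves. A direct computation from \eqref{mclmtransformation}--\eqref{psidef} gives the conserved angular momentum $x\dot y-y\dot x=\rho^2\dot\psi=\tfrac{\lambda m_0}{2}=-\tfrac{m_0}{2}$, using $\dot\psi=\tfrac{\lambda m_0}{2}\eta_\theta^{-\lambda}=\tfrac{\lambda m_0}{2}\rho^{-2}$; hence $\omega_0^2=m_0^2/4$. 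Using $\eta_\theta(0,\theta)=1$ together with \eqref{flowderiv} gives $r(0)=1$ and $\dot r(0)=\tfrac{\lambda}{2}u_0'(\theta)=-\tfrac12 u_0'(\theta)$.

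Next I would introduce the radial energy $H(t):=\dot x^2+\dot y^2=\dot r^2+\omega_0^2/r^2$, whose initial value is $H(0)=\tfrac14\big(u_0'(\theta)^2+m_0(\theta)^2\big)$, so that $H(0)/\omega_0^2=1+u_0'(\theta)^2/m_0(\theta)^2$ is exactly the constant on the right of \eqref{etathetamax}. Differentiating and inserting the solar equations $x_{tt}=-\tfrac{\lambda}{2}Fx$, $y_{tt}=-\tfrac{\lambda}{2}Fy$ gives $H'(t)=-\lambda F r\dot r=F r\dot r$ for $\lambda=-1$; since $F>0$ and $r>0$, the sign of $H'$ agrees with the sign of $\dot r$. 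The crucial structural input is that, for $\lambda=-1$ and $m_0(\theta)\ne0$, the Ermakov--Pinney equation \eqref{OSWradial} reads $\rho_{tt}=\tfrac14 m_0^2/\rho^3+\tfrac12 F\rho$, whose right-hand side is strictly positive; hence $\rho=r$ is strictly convex, $\dot r$ is increasing, and there is a single time $t_1\in[0,\infty]$ with $\dot r\le0$ on $[0,t_1)$ and $\dot r\ge0$ afterward.

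Finally I would combine these. On $[0,t_1)$ we have $\dot r\le0$, so $H$ is non-increasing and therefore $\omega_0^2/r(t)^2\le H(t)\le H(0)$, which already yields $r(t)^2\ge\omega_0^2/H(0)$ on that phase; by continuity this persists at $t_1$ when $t_1$ is finite. For $t\ge t_1$ the convexity forces $r(t)\ge r(t_1)$, so the same lower bound carries over. Hence $r(t)^2\ge\omega_0^2/H(0)$ for every $t$, and inverting gives $\eta_\theta=1/r^2\le H(0)/\omega_0^2=1+u_0'(\theta)^2/m_0(\theta)^2$, which is \eqref{etathetamax}. I expect the main obstacle to be precisely the non-monotonicity of $H$: on any interval where $r$ increases, $H$ grows and the bare inequality $\omega_0^2/r^2\le H$ becomes too weak, so the argument genuinely relies on strict convexity forcing a \emph{single} minimum of $r$ (if $r$ could oscillate, a later descent could begin from an inflated value of $H$ and spoil the bound). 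The remaining points are purely routine bookkeeping: confirming the sign of $H'$, and noting that the degenerate monotone cases are immediate, since if $r$ never decreases then $r(t)\ge r(0)=1$ while the right side of \eqref{etathetamax} is at least $1$.
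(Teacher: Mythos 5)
Your proposal is correct and is essentially the paper's own argument: the quantity $H=\dot r^2+\omega_0^2/r^2$ you track is exactly the energy $\rho_t^2+m_0^2/(4\rho^2)$ the paper differentiates, the identity $H'=F\rho\rho_t$ matches, and the key structural point — strict positivity of the right side of \eqref{OSWradial} forcing a single minimum of $\rho$ so that the energy bound from the decreasing phase controls all later times — is the same. The only cosmetic difference is that you derive everything through the $(x,y)$ solar-model variables and the conserved angular momentum $\omega_0=-m_0/2$, whereas the paper works directly with $\rho$.
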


\begin{proof}
In case $\lambda=-1$, equation \eqref{OSWradial} takes the form
$$ \rho_{tt} = \frac{m_0^2}{4\rho^3} + \frac{1}{2} \, F\rho.$$
Positivity of $F$ means that $\rho_{tt}$ is strictly positive, and this implies that while $\rho$ may possibly decrease on some interval $[0,t_0]$, it
must eventually increase, and once it begins to increase it must continue. 

If for some $\theta$ we know that $\rho(t,\theta)$ is decreasing on $[0,t_0]$ and increasing for $t>t_0$, then we compute (at fixed $\theta$) that
$$ \frac{d}{dt} \left( \rho_t^2 + \frac{m_0^2}{4\rho^2}\right) = 2\rho_t \rho_{tt} - \frac{m_0^2\rho_t}{2\rho^3} = F \rho \rho_t.$$
On $[0,t_0]$ the right side is nonpositive, and we obtain 
$$ \rho_t(t_0,\theta)^2 + \frac{m_0(\theta)^2}{4\rho(t_0,\theta)^2} \le \rho_t(0,\theta)^2 + \frac{m_0(\theta)^2}{4\rho(0,\theta)^2} = \frac{u_0'(\theta)^2 + m_0(\theta)^2}{4}.$$
In particular we have 
$$ \rho(t_0,\theta)^2 \ge \frac{m_0(\theta)^2}{u_0'(\theta)^2 + m_0(\theta)^2}.$$
Since $\rho$ must continue to increase for $t\ge t_0$, this is indeed the minimum possible value of $\rho(t,\theta)$ on the maximum time interval of existence.

Since $\eta_{\theta} = \frac{1}{\rho^2}$, we conclude that $\eta_{\theta}$ is bounded above by 
$$ \eta_{\theta}(t,\theta) \le \eta_{\theta}(t_0,\theta) = \frac{1}{\rho(t_0,\theta)^2} \le 1 + \frac{u_0'(\theta)^2}{m_0(\theta)^2},$$
on the maximum time interval of existence. 
\end{proof}
 
Obviously Corollary \ref{degregoriocorollary} is only useful when $m_0(\theta)\ne 0$, and by definition of our momentum operator $m = Hu_{\theta}$,
there will certainly be points where $m_0=0$. However such estimates could be useful for estimating the forcing function
$F$, which depends nonlocally on our variables. (Note that bounds on $F$ were derived in \cite{washabaughpreston}.) We leave further analysis for future research, but the point is that the
general framework here relates a family of Euler-Arnold-type PDEs to a well-understood central force system, which makes some
phenomena regarding breakdown or global existence easier to intuitively understand.

The reason this approach works is because the equations are ``nearly'' linear in terms of the variable $\eta_{\theta}$. Of
course the coefficients of this equation depend on $\eta_{\theta}$, and a transformation may eliminate some of this dependence
(e.g., quadratic terms like $\eta_{t\theta}^2/\eta_{\theta}^2$ can be eliminated by a power transformation). This is due to the fact 
that $\eta$ satisfies some kind of geodesic equation of the form $\eta_{tt} + \Gamma(\eta; \eta_t,\eta_t) = 0$
for some Christoffel map $\Gamma$, which is bilinear and symmetric in the last two variables but typically depends in a complicated
way on the first. Differentiating this with respect to any parameter leads to the Jacobi equation for  the variation.
In infinite dimensions the spatial variable $\theta$ itself can always be treated as this variational parameter, so that $\eta_{\theta}$
always satisfies the Jacobi equation. The coefficients and covariant derivative here depend on $\eta$ (and thus indirectly on $\eta_{\theta}$),
so we cannot view this as a true linear equation, but if the curvature is bounded or well-understood, this equation may be easy to analyze.
These are the situations we have studied here. The fact that equation \eqref{mubgeneral} applies to many situations of
continuum mechanics suggests that this technique may produce new insights that are not obvious from direct PDE techniques.

The author states that there is no conflict of interest. No data was produced for this paper.

\end{document}